%
%
%
\documentclass[10pt]{amsart}
\usepackage{mathtools,multicol}
\usepackage{kpfonts}
\usepackage{mathdots,enumerate}
\usepackage{multirow}
\newtheorem{theorem}{Theorem}[section]
\newtheorem{lemma}[theorem]{Lemma}
\newtheorem{proposition}[theorem]{Proposition}
\newtheorem{posledica}[theorem]{Corollary}

\theoremstyle{definition}

\theoremstyle{remark}
\newtheorem{remark}[theorem]{Remark}

\numberwithin{equation}{section}



\usepackage{amssymb,amsmath}
\usepackage{pst-node}

\usepackage{tikz-cd}
\usepackage{tikz}
\usepackage{enumitem}

\usepackage{tabularx}
\newcounter{rownumber}
\setcounter{rownumber}{0}

\newcommand{\C}{\mathbb{C}}

\newcommand{\R}{\mathbb{R}}

\newcommand{\hra}{\hookrightarrow}

\def\id{\mathop{\rm id}\nolimits}

\def\dim{\mathop{\rm dim}\nolimits}

\def\rank{\mathop{\rm rank}\nolimits}

\def\Rea{\mathop{\rm Re}\nolimits}
\def\Ima{\mathop{\rm Im}\nolimits}

\def\Orb{\mathop{\rm Orb}\nolimits}

\def\Span{\mathop{\rm Span}\nolimits}

\parskip=\smallskipamount


\makeatletter 
\renewcommand\p@enumii{}
\makeatother

\begin{document}
\title[]
{On normal forms of complex points of small $\mathcal{C}^{2}$-perturbations of real $4$-manifolds embedded in a complex $3$-manifold}
\author{
Tadej Star\v{c}i\v{c}}
\address{Faculty of Education, University of Ljubljana, Kardeljeva Plo\v{s}\v{c}ad 16, 1000 Lju\-blja\-na, Slovenia}
\address{Institute of Mathematics, Physics and Mechanics, Jadranska
  19, 1000 Ljubljana, Slovenia}
\email{tadej.starcic@pef.uni-lj.si}
\subjclass[2000]{32V40,58K50,15A21}
\date{January 7, 2019}


\keywords{CR manifolds, closure graphs, complex points, normal forms, perturbations, simultaneous reduction \\
\indent Research supported by grant P1-0291 and J1-7256 
from ARRS, Republic of Slovenia.}

\begin{abstract} 
We answer the question how arbitrarily small perturbations of a pair of one arbitrary and one symmetric $2\times 2$ matrix can change a normal form with respect to a certain linear group action. This result is then applied to describe the quadratic part of normal forms of complex points of small $\mathcal{C}^{2}$-perturbations of real $4$-manifolds embedded in a complex $3$-manifold.
\end{abstract}

\maketitle

\vspace{-5mm}
\section{Introduction} \label{intro}

The study of complex points was started in 1965 by E. Bishop with his seminal work on the problem of describing the hull of holomorphy of a submanifold near a point with one-dimensional complex tangent space \cite{Bishop}.
This is now very well understood for surfaces (see Bishop \cite{Bishop}, Kenig and
Webster \cite{KW}, Moser and Webster \cite{MW}),
and it later initiated in many researces in geometric analysis. For instance, (formal) normal forms for
real submanifolds in $\mathbb{C}^{n}$ near complex points were considered by Burcea \cite{Bur}, Coffman \cite{Coff},
Gong \cite{Gong1} and Gong and Stolovich \cite{GongStolo1}, Moser and Webster \cite{MW} among others.
We add that topological structure of complex points was first considered by
Lai \cite{Lai} and in the special case of surfaces by Forstneri\v c \cite{F}. Up
to $\mathcal{C}^0$-small isotopy complex points of real codimension $2$ submanifolds in complex manifolds were treated by Slapar \cite{S1, S2, S3}.

In this paper we describe the behavior of
the quadratic part of normal forms of complex points of small $\mathcal{C}^{2}$-perturbations of real $4$-manifolds embedded in a complex $3$-manifold (see Corrolary \ref{pospert}). It is a direct consequnce of a result that clarifies how a normal form for a pair of one arbitrary and one symmetric $2\times 2$ matrix with respect to a certain linear algebraic group action changes under small perturbations (see Theorem \ref{izrek}); by a careful analysis we also provide information how small the perturbations must be. Due to technical reasons, these results are precisely stated in Section \ref{Sec3} and then proved in later sections. 

Let $f\colon M^{2n}\hra X^{n+1}$ be a $\mathcal C^2$-smooth embedding of a real smooth $2n$-manifold into a complex $(n+1)$-manifold $(X,J)$. A point
$p\in M$ is \textit{CR-regular} if the dimension of the complex 
tangent space $T^C_pM=df(T_pM)\cap Jdf(T_pM)\subset T_{f(p)}X$ is $n-1$, while $p$ is called \textit{complex} when the complex dimension of $T_p^CM$ equals $n$, thus $T_p^CM=df(T_pM)$. 
By Thom's transversality theorem \cite[Section 29]{Arnold}, for generic embeddings the intersection
is transverse and so complex points are isolated.  
Using Taylor expansion $M$ can near a complex point $p\in M$ be seen as a graph:
\begin{equation*}
w=r^Tz+\overline{z}^TAz+\tfrac{1}{2}\overline{z}^TB\overline{z}+\tfrac{1}{2}z^TCz+o\big(|z|^2\big), \quad (w(p),z(p))=(0,0),
\end{equation*}
where $(z,w)=(z_1,z_2,\ldots,z_n,w)$ are suitable local coordinates on
$X$, and $A\in \mathbb{C}^{n\times n}$, $B,C\in \mathbb{C}^{n\times n}_S$, $r\in \mathbb{C}^n$. 
By $\mathbb{C}^{n\times n}$ we denote the group of all $n\times n$ complex matrices, and by $\mathbb{C}^{n\times n}_S$, $GL_n(\mathbb{C})$, respectively, its subgroups of symmetric and nonsingular matrices. After a simple change of coordinates $(\widetilde{z},\widetilde{w})=\big(z,w-b^Tz-\tfrac{1}{2}z^T(\overline{B}-C)z\big)$ it is achieved that $df(T_p M)\approx \{\widetilde{w}=0\}$, and the \textit{normal form up to quadratic terms} is:
\begin{equation}\label{BasForm1}
\widetilde{w}=\overline{z}^TAz+\Rea (z^TBz)+o(|z|^2), \quad (\widetilde{w}(p),z(p))=(0,0),\,\,\, A\in \mathbb{C}^{n\times n}, B\in \mathbb{C}^{n\times n}_S.
\end{equation}
%
A real analytic complex point $p$ is called \textit{flat}, if local coordinates can be chosen so that the graph (\ref{BasForm1}) lies in $\C^n_z\times \R\subset \C^n_z\times\C_w$. It is \textit{quadratically flat}, if 
the quadratic part of (\ref{BasForm1}) is real valued; this happens precisely when $A$ in (\ref{BasForm1}) is Hermitian.

Any holomorphic change of coordinates that
preserves $(0,0)$ and $ \{\widetilde{w}=0\}$ as a set in (\ref{BasForm1}), has the same effect on
the quadratic part as a complex-linear change
\[
\begin{bmatrix}
\widetilde{z} \\
\widetilde{w}
\end{bmatrix}
=
\begin{bmatrix}
P & r \\
0 & c
\end{bmatrix}
\begin{bmatrix}
\widehat{z} \\
\widehat{w}
\end{bmatrix}, \quad P\in GL_n(\mathbb{C}),\, r\in \C^n, \,c\in \mathbb{C}^{*}=\mathbb{C}\setminus \{0\}.
\]
Furthermore, using this linear changes of coordinates and a biholomorphic change
$
(\widehat{z},\widehat{w})\mapsto \left(\widetilde{z},\widehat{w}-\frac{1}{2}\widehat{z}^T\big(\frac{\overline
    c-c}{|c|^2}P^TBP\big)\widehat{z}\right)
$
transforms (\ref{BasForm1}) into the equation that can by a slight abuse of notation be written as
\begin{equation*}
w=\overline{z}^T\left(\tfrac{1}{c}P^{*}AP\right) z+\Rea \left(z^T (\tfrac{1}{\overline{c}} P^TBP) z\right)+o(|z|^2).
\end{equation*}
Studying the quadratic part of complex points thus means examining 
the action of a linear group $\mathbb{C}^*\times GL_n(\mathbb{C})$ on pairs of matrices $\mathbb{C}^{n\times n}\times \mathbb{C}^{n\times n}_S $, introduced in \cite{Coff}:
\begin{equation}\label{action1}
\Xi\colon\bigl((c,P),(A,B)\bigr)\mapsto (cP^{*}AP,\overline{c}P^TBP), \quad P\in GL_n(\mathbb{C}), c\in \mathbb{C}^{*}. 
\end{equation}
Problems of the quadratic part thus reduce to problems in matrix theory.

When $n=1$ complex points are
always quadratically flat and locally given by the equations
$w=z\overline{z}+\frac{\gamma}{2} (z^2+\overline{z}^2)+o(|z|^2)$, $0\leq \gamma
<\infty$ or $w=z^2+\overline{z}^2+o(|z|^2)$ (Bishop
\cite{Bishop}). If in addition they are real analytic and elliptic  ($\gamma<1$),
they are also flat (see \cite{MW}). A relatively simple
description of normal forms of the action (\ref{action1}) was obtained for $n=2$ (see Coffman
\cite{Coff} and Izotov \cite{Izotov}), while in dimensions $3$ and $4$ a complete list of normal forms has been given only in the case of quadratically flat complex points (see Slapar and Star\v{c}i\v{c} \cite{ST}). Nevertheless, if $B$ in (\ref{BasForm1}) is nonsingular the classification has been done even in higher dimensions by the result of Hong \cite{Hong89}.

The problem of normal forms of matrices under perturbations was first studied by Arnold (see e.g. \cite{Arnold}), who considered matrices depending on parameters under similarity (miniversal deformations). The change of Jordan canical form has been then succesfully investigated also through the works of Markus and Parilis \cite{MarkusParilis}, Edelman, Elmroth and K\aa gstrom \cite{EEK}, among others; the software Stratigraph \cite{EJK} contructs the relations between Jordan forms. However, the problem of normal forms for $*$-conjugation (or $T$-conjugation) under small perturbations seems to be much more involved, and has been so far inspected only in lower dimensions; check the papers Futorny, Klimenko and Sergeichuk \cite{FKS1}, Dmytryshyn, Futorny and Sergeichuk\cite{DFS} (Dmytryshyn, Futorny, K\aa gstr\"{o}m, Klimenko and Sergeichuk \cite{FKS2}). Virtually nothing has been known until the time of this writing about simultaneous small perturbations of pairs of matrices under these actions.

In connection to these problems we mention results of Guralnick \cite{Gura} and Leiterer \cite{Leiterer}, who respectively studied similarity of holomorphic maps from Riemann surfaces or Stein spaces to a set of matrices. 
We shall not consider this matter here.
 
%
%
%
%
%
\section{Normal forms in dimension $2$}\label{secNF}

We recall the basic properties of an action of a Lie group on a manifold (check e.g. 
\cite[Theorem IV.9.3]{Boot}).
These are well known and they are not difficult to prove.

\begin{proposition}\label{propA}
Let $\Phi\colon G\times X\to X$ be a smooth (analytic) action  of a real (complex) Lie group $G$ with a unit $e$, acting on a smooth (complex) manifold $X$, i.e. 
\[
\Phi(e,x)=x, \qquad \Phi\big(g,\Phi(h,x)\big)=\Phi(g h,x),\,g,h\in G,\,x\in X.
\]
Then $\Phi$ satisfies the following properties:
\begin{enumerate}
\item For any $g\in G$ the map $\Phi^g \colon X\to X$, $ x \mapsto \Phi(g,x)$ is an automorphism and the map $L_g\colon G\to G$, $h\mapsto g h$ is a transitive automorphism.
\item \label{propAb} For any $x\in X$ the orbit map $\Phi_x \colon G\to X$, $ g \mapsto \Phi(g,x)$ is transitive and equivariant 
\big(for any $g\in G$ we have $\Phi_x(L_g (h))=\Phi^g(\Phi_x(h))$, $h\in G$\big), and $\Phi_x$ is of constant rank with $d_{g}\Phi_x=d_{x}\Phi^g\circ d_e\Phi_x\circ (d_{e}L_g)^{-1}$.
\item \label{propAc} An orbit of $x\in X$, denoted by 
$\Orb_{\Phi}(x)=\{\Phi(g,x)\mid g\in G\}$,
is an immersed (locally embedded) homogeneous submanifold of dimension equal to $\rank (\Phi_x)$.
\end{enumerate}
\end{proposition}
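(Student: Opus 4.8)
The plan is to derive everything from the action axioms, upgrading infinitesimal information at the unit $e$ to global statements via equivariance. For item (1), the map $\Phi^{g^{-1}}$ is a two-sided inverse of $\Phi^{g}$, since $\Phi^{g}\circ\Phi^{g^{-1}}=\Phi(g,\Phi(g^{-1},\cdot))=\Phi(gg^{-1},\cdot)=\Phi(e,\cdot)=\id_{X}$ and symmetrically; both maps are smooth (analytic) because $\Phi$ is, so $\Phi^{g}$ is an automorphism of $X$. The same computation in $G$ shows $L_{g^{-1}}$ inverts $L_{g}$, so $L_{g}$ is an automorphism of the manifold $G$, and it is transitive because $L_{h'h^{-1}}(h)=h'$. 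For item (2), equivariance is the computation $\Phi_{x}(L_{g}(h))=\Phi(gh,x)=\Phi(g,\Phi(h,x))=\Phi^{g}(\Phi_{x}(h))$, which in particular exhibits $\Phi_{x}$ as a transitive map onto $\Orb_{\Phi}(x)$. Differentiating $\Phi_{x}\circ L_{g}=\Phi^{g}\circ\Phi_{x}$ at $e$ and using $L_{g}(e)=g$, $\Phi_{x}(e)=x$ gives
\[
d_{g}\Phi_{x}\circ d_{e}L_{g}=d_{x}\Phi^{g}\circ d_{e}\Phi_{x},
\]
hence $d_{g}\Phi_{x}=d_{x}\Phi^{g}\circ d_{e}\Phi_{x}\circ(d_{e}L_{g})^{-1}$, once we know $d_{e}L_{g}$ is invertible from item (1). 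As $d_{x}\Phi^{g}$ and $(d_{e}L_{g})^{-1}$ are linear isomorphisms, $\rank(d_{g}\Phi_{x})=\rank(d_{e}\Phi_{x})$ for all $g$, so $\Phi_{x}$ has constant rank $k:=\rank(\Phi_{x})$.

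For item (3), constant rank lets us apply the rank theorem at $e$: there is an open neighbourhood $U$ of $e$ such that $S:=\Phi_{x}(U)$ is an embedded $k$-dimensional submanifold of $X$ and $\Phi_{x}|_{U}\colon U\to S$ is a submersion. By equivariance $\Phi_{x}(gU)=\Phi^{g}(S)$ is again an embedded $k$-submanifold for every $g$, and $\Orb_{\Phi}(x)=\bigcup_{g\in G}\Phi^{g}(S)$. One then checks that the overlaps of the pieces $\Phi^{g}(S)$ are open in each and that the transition maps are smooth (analytic), so these charts give $\Orb_{\Phi}(x)$ the structure of an immersed (locally embedded) homogeneous submanifold of dimension $k=\rank(\Phi_{x})$; equivalently, since the fibres of $\Phi_{x}$ are the cosets of the closed --- hence Lie --- subgroup $\Stab_{\Phi}(x)=\Phi_{x}^{-1}(x)$, the map $\Phi_{x}$ descends to an injective immersion $G/\Stab_{\Phi}(x)\to X$ with image $\Orb_{\Phi}(x)$, and $\dim G-\dim\Stab_{\Phi}(x)=k$.

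The only step that is not purely formal is item (3): assembling the local constant-rank slices into one manifold requires verifying that the candidate charts are pairwise compatible and that the resulting (possibly non-subspace) topology on the orbit is well defined. This is exactly where transitivity of $\Phi_{x}$ and the orbit--stabilizer identification $\Orb_{\Phi}(x)\cong G/\Stab_{\Phi}(x)$ enter, the latter relying on Cartan's closed subgroup theorem to realize $\Stab_{\Phi}(x)$ as an embedded Lie subgroup and to pin down the dimension of the orbit as $\rank(\Phi_{x})$. For all of this one may, as the authors do, ultimately appeal to the standard treatment in \cite[Theorem IV.9.3]{Boot}.
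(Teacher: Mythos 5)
Your proposal is correct: the equivariance identity, the constant-rank formula $d_{g}\Phi_{x}=d_{x}\Phi^{g}\circ d_e\Phi_{x}\circ (d_{e}L_g)^{-1}$, and the rank-theorem/orbit--stabilizer argument for item (3) are exactly the standard proof. The paper itself supplies no proof of this proposition --- it only remarks that the properties are well known and defers to \cite[Theorem IV.9.3]{Boot} --- and your write-up is precisely the argument that reference contains, including the one genuinely nontrivial point you correctly flag, namely assembling the local slices (equivalently, invoking Cartan's closed subgroup theorem for $\Stab_{\Phi}(x)$) and noting that the resulting orbit topology need not be the subspace topology.
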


Any orbit can be endowed (globally) with the structure of a manifold, but it does not necessarily coincide with the subspace topology (\cite[Theorem IV.9.6]{Boot}).

We proceed with the list of representatives of orbits (normal forms) of the action (\ref{action1}) for $n=2$ obtained by Coffman (see \cite[Sec. 7,Table 1]{Coff}). In addition, we compute tangent spaces of orbits and then arrange normal forms into a table according to dimensions of their orbits ($42$ types); these are calculated similarily as in the case of similarity (see e.g. Arnold \cite[Section 30]{Arnold}). 
To simplify the notation, $a\oplus d$ denotes the diagonal matrix with 
$a$, $d$ on the main diagonal, while the $2\times 2$ identity-matrix  
and the $2\times 2$ zero-matrix are $I_2$ 
and $0_2$, respectively.

\begin{lemma}\label{lemalist}
Orbits of the action (\ref{action1}) for $n=2$ (represented by its normal forms $(A,B)$) are immersed manifolds, their dimensions are given in the first columns of the tables:

\begin{tabular}{p{4mm}|| c| c  ||c | c || c | c ||c | c}
\small{$\dim$} & $A$ & $B$ & $A$ & $B$ & $A$ & $B$ & $A$ & $B$\\
\hline
9
&
\multirow{ 10}{*}{
\small{$1\oplus e^{i\theta}$}
}
&
$
\begin{bsmallmatrix}
0 & b \\
b & d
\end{bsmallmatrix}
$
&
\multirow{ 10}{*}{
$
\begin{bsmallmatrix}
0 & 1 \\
\tau & 0
\end{bsmallmatrix}
$
}
&
$
\begin{bsmallmatrix}
e^{i\varphi} & b \\
b & \zeta
\end{bsmallmatrix}
$
&
\multirow{ 10}{*}{
$
\begin{bsmallmatrix}
0 & 1 \\
1 & i
\end{bsmallmatrix}
$
}
&
$
\begin{bsmallmatrix}
0 & b \\
b & 0
\end{bsmallmatrix}
$
&
\multirow{ 11}{*}{
$
\begin{bsmallmatrix}
0 & 1 \\
0 & 0
\end{bsmallmatrix}
$
}
&
$
a\oplus 1
$
\\

&
&
$
\begin{bsmallmatrix}
a & r e^{i\varphi} \\
r e^{i\varphi} & d
\end{bsmallmatrix}
$
&
&
$
\begin{bsmallmatrix}
0 & b \\
b & e^{i\varphi}
\end{bsmallmatrix}
$
&
&
$
a\oplus \zeta
$
&
&
$
\begin{bsmallmatrix}
\zeta & b \\
b & 1
\end{bsmallmatrix}
$
\\

&
&
$
\begin{bsmallmatrix}
a & b \\
b & 0
\end{bsmallmatrix}
$
&
&
$
1\oplus \zeta
$
&
&
&
&
$
\begin{bsmallmatrix}
1 & b \\
b & 0
\end{bsmallmatrix}
$
\\

&
&
&
&
$
0\oplus 1
$
&
&
&
&
 \\
\cline{1-1}\cline{3-3}\cline{5-5}\cline{7-7}\cline{9-9}

8
&
&
$
0\oplus d
$
&
&
$
\begin{bsmallmatrix}
0 & b \\
b & 0
\end{bsmallmatrix}
$
&
&
$
0\oplus d
$
&
&
$
0\oplus 1
$
\\

&
&
$
\begin{bsmallmatrix}
0 & b \\
b & 0
\end{bsmallmatrix}
$
&
&
&
&
&
&
$
1\oplus 0
$
\\

&
&
$
a\oplus 0
$
&
&
&
&
&
&
\\
\cline{1-1}\cline{3-3}\cline{5-5}\cline{7-7}\cline{9-9}

7
&
&
$0_2$
&
&
$0_2$
&
&
$0_2$
&
&
$
\begin{bsmallmatrix}
0 & b \\
b & 0
\end{bsmallmatrix}
$
\\ 
\cline{1-7}\cline{9-9}

6
&
\multicolumn{5}{c}{}
&
&
&
$0_2$
\end{tabular}\\

\begin{tabular}{p{4mm}|| c| c  ||c | c || c | c ||c | c}
\small{$\dim$} & $A$ & $B$ & $A$ & $B$ & $A$ & $B$ & $A$ & $B$\\
\hline
$9$ & \multirow{6}{*}{$I_2$} & $a\oplus d$, $\scriptscriptstyle{a< d}$ &  \multirow{6}{*}{\small{$1\oplus -1$}} & $a\oplus d$, $\scriptscriptstyle{a< d}$ & 
\multirow{4}{*}{
$
\begin{bsmallmatrix}
0 & 1 \\
1 & 0
\end{bsmallmatrix}
$
}
& $1\oplus de^{i\theta}$ & 
\multirow{7}{*}{\small{$1\oplus 0
$}}
& $a \oplus 1$\\
  &  &  &   &  &  &
 $
\begin{bsmallmatrix}
0 & b \\
b & 1
\end{bsmallmatrix}
$  
  & &  \\
\cline{1-1}\cline{3-3}\cline{5-5}\cline{7-7}\cline{9-9}

8 & &  $d_0\oplus d$ &  & $d_0\oplus d$ & & $1 \oplus 0 $ & & $0\oplus 1$\\
 &  &    & &
 $
 \begin{bsmallmatrix}
 0 & b \\
 b & 0
 \end{bsmallmatrix}
$
 &  & & &
$
\begin{bsmallmatrix}
0 & 1 \\
1 & 0
\end{bsmallmatrix}
$ \\
\cline{1-1}\cline{3-3}\cline{5-7}\cline{9-9}
6 & & & & & \multirow{4}{*}{$0_2$} & $I_2$ & & \\
\cline{1-1}\cline{3-3}\cline{5-5}\cline{7-7}\cline{9-9}
5 & & $0_2$ & & $0_2$ & &&  & $a\oplus 0$\\
\cline{1-5}\cline{7-7}\cline{9-9}
4 &   \multicolumn{2}{c}{} &   \multicolumn{2}{c||}{} & & $1\oplus 0$ &  & $0_2$\\
\cline{7-9}
0 &   \multicolumn{2}{c}{} &   \multicolumn{2}{c||}{} & & $0_2$ &  \multicolumn{2}{c}{} \\
\end{tabular}\\
Here $0<\tau <1$, $0<\theta <\pi$,\,\,\, $a,b,d>0$,\, $d_0\in\{0,d\}$,\, $r\geq 0$,\, $\zeta \in \mathbb{C}$,\, $0\leq \varphi <\pi$.
\end{lemma}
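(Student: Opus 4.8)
The plan is as follows. The first assertion — that every orbit of $\Xi$ is an immersed homogeneous submanifold of dimension $\rank(\Xi_{(A,B)})$ — is immediate from Proposition \ref{propA}(\ref{propAc}). That the $42$ listed pairs $(A,B)$ form a complete and irredundant set of orbit representatives for $n=2$ is Coffman's classification \cite[Sec.\ 7, Table 1]{Coff}, which I take as given. So the only thing left to prove is that, for each representative, $\rank(\Xi_{(A,B)})$ equals the integer recorded in the first column of the tables.

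I would compute this rank from the differential of the orbit map at the identity, exactly as for the conjugacy action in Arnold \cite[Section 30]{Arnold}. View $G=\mathbb{C}^{*}\times GL_2(\mathbb{C})$ as a real Lie group of dimension $10$ and identify its Lie algebra, as a real vector space, with $\mathbb{C}\times\mathbb{C}^{2\times 2}$. Differentiating $t\mapsto\Xi\big((1+t\gamma,I_2+tQ),(A,B)\big)$ at $t=0$, using $\tfrac{d}{dt}\big|_{0}(I_2+tQ)^{*}=Q^{*}$ and $\overline{1+t\gamma}=1+t\overline{\gamma}$, gives
\begin{equation}\label{diffXi}
d_e\Xi_{(A,B)}(\gamma,Q)=\big(\gamma A+Q^{*}A+AQ,\ \overline{\gamma}\,B+Q^{T}B+BQ\big),\qquad(\gamma,Q)\in\mathbb{C}\times\mathbb{C}^{2\times 2}.
\end{equation}
So the tangent space to $\Orb_{\Xi}(A,B)$ at $(A,B)$ is the image of the $\mathbb{R}$-linear map \eqref{diffXi}, and, since $\Xi_{(A,B)}$ has constant rank by Proposition \ref{propA}(\ref{propAb}),
\[
\dim\Orb_{\Xi}(A,B)=\rank_{\mathbb{R}}\big(d_e\Xi_{(A,B)}\big)=10-\dim_{\mathbb{R}}\Ker\big(d_e\Xi_{(A,B)}\big),
\]
where $\Ker(d_e\Xi_{(A,B)})$ is the Lie algebra of $\Stab_{\Xi}(A,B)$, i.e.\ the real solution space of
\begin{equation}\label{stabeq}
\gamma A+Q^{*}A+AQ=0,\qquad\overline{\gamma}\,B+Q^{T}B+BQ=0.
\end{equation}
As a sanity check, $(c,c^{-1/2}I_2)\in\Stab_{\Xi}(A,B)$ for every $c>0$, so no orbit has dimension exceeding $9$.

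It then remains to solve \eqref{stabeq} for each of the $42$ representatives. I would organize this by the shape of $A$ — the diagonal blocks $1\oplus e^{i\theta}$, $I_2$, $1\oplus-1$, $1\oplus 0$, $0_2$, together with $\begin{bsmallmatrix}0&1\\1&0\end{bsmallmatrix}$ and the nilpotent $\begin{bsmallmatrix}0&1\\0&0\end{bsmallmatrix}$ — since in every case $A$ is diagonal or of rank at most one, so the first equation of \eqref{stabeq} already forces most entries of $Q=\begin{bsmallmatrix}q_1&q_2\\q_3&q_4\end{bsmallmatrix}$ to vanish and pins down $\gamma$ up to a residual parameter, after which the second equation restricts whatever survives. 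Because the conjugates $Q^{*}$ and $\overline{\gamma}$ make \eqref{diffXi} genuinely $\mathbb{R}$-linear (not $\mathbb{C}$-linear), I would split each such system into its real and imaginary parts; every case then collapses to a small explicit real linear system with an immediately visible solution-space dimension. Recording the outcomes and sorting the representatives by orbit dimension produces the two tables (and, incidentally, their tangent spaces, read off from \eqref{diffXi}).

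The argument has no conceptual content; it is entirely bookkeeping, and that is also where the difficulty lies: there are $42$ cases, one must track real and imaginary parts separately (so the complex-linear shortcuts available for ordinary similarity do not apply), and for each one-parameter family appearing in the tables — for instance $B=\begin{bsmallmatrix}a&re^{i\varphi}\\re^{i\varphi}&d\end{bsmallmatrix}$, $B=1\oplus de^{i\theta}$, or $B=\begin{bsmallmatrix}\zeta&b\\b&1\end{bsmallmatrix}$ — one must check that the dimension of the solution space of \eqref{stabeq} stays constant over the whole admissible parameter range, with no drop at exceptional values.
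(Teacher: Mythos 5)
Your reduction is sound and is, at bottom, the same computation the paper performs: both linearize the orbit map at the identity of $\mathbb{C}^{*}\times GL_2(\mathbb{C})$ and obtain the tangent vector $\big(\gamma A+Q^{*}A+AQ,\ \overline{\gamma}B+Q^{T}B+BQ\big)$. The difference is only in which side of rank--nullity you bookkeep: the paper computes the \emph{image} of this $\mathbb{R}$-linear map, writing the ten spanning vectors $w_1,w_2,u_{jk},v_{jk}$ explicitly in $\mathbb{R}^{14}$ and finding the dimension of their span, organized into four families according to the shape of $A$ so that whole parameter ranges are treated at once; you compute its \emph{kernel}, i.e.\ the Lie algebra of the stabilizer, and use $\dim\Orb_{\Xi}(A,B)=10-\dim_{\mathbb{R}}\Ker$. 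The two are equivalent, your differential formula is correct, and your a priori bound via the stabilizing elements $(c,c^{-1/2}I_2)$, $c>0$, is right; spot-checking your scheme does reproduce table entries (e.g.\ kernel dimension $6$ for $(0_2,1\oplus 0)$ giving orbit dimension $4$, and kernel dimension $4$ for $(0_2,I_2)$ giving $6$). What the kernel route buys is smaller linear systems and, often, an easier view of why the dimension is constant along the families in $\theta,\tau,\zeta,b,\ldots$; what it does not buy is escape from the casework, which is the actual content of the lemma: your proposal stops exactly where the paper's proof begins in earnest, deferring the explicit solves for all representatives (the paper's Cases I--IV) and the check that no dimension drops at exceptional parameter values. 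So the plan is correct and would yield the stated tables, but to count as a complete proof the case-by-case determination still has to be carried out.
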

A minor change is made in comparison to the original list in \cite{Coff}, as
$
\left(\begin{bsmallmatrix}
0 & 1\\
1 & 0
\end{bsmallmatrix},
\begin{bsmallmatrix}
1 & 0\\
0 & 0
\end{bsmallmatrix}\right)
$ is taken instead of
$
\left(\begin{bsmallmatrix}
1 & 0\\
0 & -1
\end{bsmallmatrix},
\begin{bsmallmatrix}
1 & 1\\
1 & 1
\end{bsmallmatrix}\right)
$; conjugate the later one with
$\frac{1}{2}\begin{bsmallmatrix}
1 & 2\\
1 & -2
\end{bsmallmatrix}
$.
\begin{proof}[Proof of Lemma \ref{lemalist}]
By Proposition \ref{propA} (\ref{propAb}), (\ref{propAc}), orbits of the action $\Xi$ in (\ref{action1}) for $n=2$ are immersed manifolds. To compute the tangent space of the orbit $\Orb_{\Xi}(A,B)$ we fix $(A,B)\in \mathbb{C}^{2\times 2}\times \mathbb{C}_S^{2\times 2}$, choose a path going through $(1,(I_2,I_2))$:
\[
\gamma\colon (-\delta,\delta)\to \mathbb{C}^{*}\times GL_2(\mathbb{C}), \quad \gamma(t)=(1+t\alpha,I+tX), \qquad \alpha\in \mathbb{C}, X\in \mathbb{C}^{2\times 2}, \delta>0
\]
and then calculate
\begin{align*}
\frac{d}{dt}\Big|_{t=0}\big((I+tX)^*A(I+tX)\big)& =\frac{d}{dt}\Big|_{t=0}\big(A+t(X^*A+AX)+t^2X^*AX\big)
                                        =(X^*A+AX),\\
\frac{d}{dt}\Big|_{t=0}\big((I+tX)^TB(I+tX)\big) &=\frac{d}{dt}\Big|_{t=0}\big(B+t(X^TB+BX)+t^2X^TBX\big)
                                        =(X^TB+BX),\\
\frac{d}{dt}\Big|_{t=0}\Xi\big( \gamma(t),(A,B)\big)& =\big(\alpha A+(X^*A+AX),\overline{\alpha} B+ (X^TB+BX)\big).
\end{align*}
Writing $X=\sum_{j,k=1}^2(x_{j,k}+iy_{j,k})E_{jk}$, $\alpha=u+iv$, where $E_{jk}$ is the elementary matrix with one in the $j$-th row and $k$-th column and zeros otherwise, we deduce that
\begin{align*}
\alpha A+(X^*A+AX) &=   (u+iv) A+\sum_{j,k=1}^2(x_{j,k}-iy_{j,k})E_{kj}A+\sum_{j,k=1}^2(x_{j,k}+iy_{j,k})AE_{jk}\\
                             &=   u A+v(iA)+\sum_{j,k=1}^2x_{jk}(E_{kj}A+AE_{jk})+\sum_{j,k=1}^2y_{jk}i(-E_{kj}A+AE_{jk})
\end{align*}
and in a similar fashion we conclude that
\begin{equation*}
\overline{\alpha} B+(X^TB+BX)  
                             =   u B+v(-iB)+\sum_{j,k=1}^2x_{jk}(E_{kj}B+BE_{jk})
                               +\sum_{j,k=1}^2y_{jk}i(E_{kj}B+BE_{jk}).
\end{equation*}
Let a $2\times 2$ complex (symmetric) matrix be identified with a vector in a real Euclidean space $\mathbb{R}^{8} \approx \mathbb{C}^{2\times 2}$ (and $\mathbb{R}^{6} \approx \mathbb{C}^{2\times 2}_S$), thus $\mathbb{C}^{2\times 2}\times \mathbb{C}^{2\times 2}_S\approx \mathbb{R}^{8}\times \mathbb{R}^{6}= \mathbb{R}^{14}$ with the standard basis $\{e_1,\ldots,e_{14}\}$. The tangent space of an orbit $\Orb_{\Xi}(A,B)$ can then be seen as the linear space spanned by the  vectors $\{w_1,w_2\}\cup \{ u_{jk},v_{jk}\}_{j,k\in\{1,2\} }$, where
\begin{align}\label{vecspan}
&w_1\approx (A,B), \qquad w_2\approx (iA,-iB), \\
v_{jk}\approx (E_{kj}A+AE_{jk},E_{kj}B+BE_{jk}), \quad & u_{jk}\approx i(-E_{kj}A+AE_{jk},E_{kj}B+BE_{jk}),\quad j,k\in \{1,2\}.\nonumber
\end{align}

We splitt our consideration of tangent spaces according to the list of normal forms in \cite[Sec. 7,Table 1]{Coff} into several cases.
In each case the tangent space will be written as a direct sum of linear subspaces $V_1\oplus V_2$ (with trivial intersection) such that $V_2\subset \Span\{e_j\}_{9\leq j\leq 14}$ and $V_1$ will be either trivial or of the form $\sum_{j=1}^{14}\alpha_j e_j$ with some  nonvanishing $\alpha_{j_0}$, $j_0 \in \{1,\ldots, 8\}$.

\begin{enumerate}[label={\bf Case \Roman*.},ref={Case \Roman*},wide=0pt,itemsep=5pt]
\item \label{s1i}
$
(A,B)=
\big(\begin{bsmallmatrix}
0 & 1\\
1 & i
\end{bsmallmatrix},
\begin{bsmallmatrix} 
a & b \\  
b &  d 
\end{bsmallmatrix}
\big)$,\quad $a,b\geq 0$, \,\,$d\in \mathbb{C}$

\quad 
From (\ref{vecspan}) we obtain that
{\small
\begin{align*}
&w_1 =e_3+e_5+e_8+a e_9+b e_{11}+ (\Rea d) e_{13}   +(\Ima d) e_{14} \\
&w_2 =e_4+e_6-e_7-a e_{10}-b e_{12}+(\Ima d) e_{13} -(\Rea d) e_{14}\end{align*}
\vspace{-8mm}
\begin{align*}
&v_{11} =e_3+e_5+ 2a e_9+b e_{11} & & v_{21} =2 e_1+ e_4+ e_6+ 2b e_{9}+(\Rea d) e_{11}+(\Ima d) e_{12}\\
&v_{12} =2 e_7+a e_{11}+2b e_{13}  & & v_{22} =e_3+e_5+2 e_8+b e_{11}+ 2(\Rea d) e_{13}+ 2(\Ima d) e_{14}\\
& u_{11} =-e_4+e_6+2a e_{10}+b e_{12}  & & u_{21} =e_3-e_5+2b e_{10}-(\Ima d) e_{11}+(\Rea d) e_{12}\\
&u_{12} =a e_{12}+2b e_{14}  &&u_{22} =e_4-e_6+b e_{12}-2(\Ima d) e_{13}+2(\Rea d) e_{14}
\end{align*}
}%
It is apparent that $v_{22}=2w_1-v_{11}$. By further setting  
\[
w_3=u_{22}+u_{11}=2a e_{10}+2b e_{12}-2(\Ima d) e_{13}+2(\Rea d) e_{14}
\]
we can choose $V_1=\Span\{w_1,w_2,v_{11},v_{12},v_{21},u_{11},u_{21}\}$, $V_2=\Span\{u_{12},w_{3}\}$ and observe that $\dim V_1=7$ 
$\dim V_2=\small{\left\{
\begin{array}{ll}
0,  &  a=b=d=0 \\
1,  &  a=b=0,d\neq 0\\
2,  &  \textrm{otherwise}
\end{array}\right.}
$.

\item \label{s100l}
$
(A,B)=
\big(1\oplus \lambda,
\begin{bsmallmatrix} 
a & b \\  
b &  d 
\end{bsmallmatrix}
\big)$,
\quad $\lambda\in \{0\}\cup \{e^{i\theta}\}_{\theta\in [0,\pi]}$,\,\, $a,d\geq 0$,\,\,$b\in \mathbb{C}$

\quad
We have (see (\ref{vecspan})):
\small
\begin{align*}
&w_1=e_1+(\Rea \lambda) e_7+ (\Ima \lambda) e_8+ a e_9+(\Rea b) e_{11}+(\Ima b) e_{12}+d e_{13}\nonumber\\
&w_2=e_2- (\Ima \lambda) e_7+(\Rea \lambda) e_8-a e_{10}+(\Ima b) e_{11}-(\Rea b) e_{12}-d e_{14}\nonumber\\
&v_{11}=2 e_1+2a e_9+(\Rea b) e_{11}+(\Ima b) e_{12}\nonumber\\
&v_{12}=e_3+e_5+a e_{11}+2(\Rea b) e_{13}+ 2(\Ima b) e_{14}\nonumber\\
&v_{21}=(\Rea \lambda) e_{3}+ (\Ima \lambda) e_4+ (\Rea \lambda) e_{5}+(\Ima \lambda) e_6+2(\Rea b) e_9+ 2(\Ima b) e_{10}+d e_{11}\\
&v_{22}=2 (\Rea \lambda) e_7+ 2(\Ima \lambda) e_8+(\Rea b) e_{11}+(\Ima b) e_{12}+2de_{13}\nonumber\\
&u_{11}=2a e_{10}-(\Ima b) e_{11}+(\Rea b) e_{12}\nonumber\\
%
&u_{12}=e_4-e_6+a e_{12}-2(\Ima b) e_{13}+2(\Rea b) e_{14}\nonumber\\
&u_{21}=(\Ima \lambda) e_3-(\Rea \lambda) e_4-(\Ima \lambda) e_5+(\Rea \lambda) e_6 -2(\Ima b) e_9 +2(\Rea b) e_{10}+d e_{12}\nonumber\\
&u_{22}=(-\Ima b) e_{11}+(\Rea b) e_{12}+2d e_{14}.\nonumber
\end{align*}
\normalsize
It is immediate that $v_{11}=2w_1-v_{22}$. 

\quad
For $\lambda=0$ we set 
$V_1=\Span\{w_1,w_2,v_{12},u_{12}\}$, $V_2=\Span\{v_{21},v_{22},u_{11},u_{21}\}$ and we have $\dim V_1=4$ and 
$\dim V_2=\small{\left\{
\begin{array}{ll}
0,  &  a=b=d=0 \\
1,  &   d=b=0,a>0\\
4,  &  a=b=0,d=1 \textrm{ or }a=d=0,b=1\\
5,  &  0<a,d=1,b=0
\end{array}\right.}
$.
Next, for $\lambda=e^{i\theta}$, $0<\theta<\pi$ we take 
$V_1=\Span\{w_1,w_2,v_{12},v_{21},u_{22},u_{12},u_{21}\}$ and $V_2=\Span\{u_{11},u_{22}\}$ with 
$\dim V_1=7$, 
$\dim V_2=\small{\left\{
\begin{array}{ll}
0,  &  a=b=d=0 \\
1,  &   ad\neq 0,a+d\neq 0 \textrm{ or } a,d=0,b\neq 0\\
2,  &  \textrm{otherwise}
\end{array}\right.}
$.

\quad
Finally, for $\lambda\in \{-1,1\}$ we set 
\begin{align*}
&w_3=v_{21}-\lambda v_{12}= 2(\Rea b) e_9+ 2(\Ima b) e_{10}+(d-\lambda a) e_{11}-2\lambda(\Rea b) e_{13}- 2\lambda(\Ima b) e_{14}\\
&w_4=u_{21}+\lambda u_{12}=-2(\Ima b) e_9 +2(\Rea b) e_{10}+(d+\lambda a) e_{12}+2\lambda(\Ima b) e_{13}-2\lambda(\Rea b) e_{14}.
\end{align*}
If we choose   
$V_1=\Span\{w_1,w_2,v_{12},v_{22},u_{12}\}$, $V_2=\Span\{u_{11},u_{22},w_{3},w_4\}$, 
then $\dim V_1=5$, 
$\dim V_2=\small{\left\{
\begin{array}{ll}
0,  &  a=b=d=0 \\
3,  &   d>0, a\in \{0,d\}, b=0 \textrm{ or } a,d=0,b\neq 0\\
4,  &  0<a< d,b=0
\end{array}\right.}
$.

\item \label{s01t0}
$
(A,B)=\big(\begin{bsmallmatrix}
0 & 1\\
t & 0
\end{bsmallmatrix},
\begin{bsmallmatrix} a & b \\  b&  d \end{bsmallmatrix}\big)
$, \quad $0\leq t\leq 1,\,\, b\geq 0$, \,\,$a,d\in\mathbb{C}$

\quad 
It follows from (\ref{vecspan}) that
{\small
\begin{align*}
&w_1= e_3+t e_5+ (\Rea a) e_9+ (\Ima a9 e_{10}+b e_{11}+(\Rea d) e_{13}+(\Ima d) e_{14} \\
&w_2= e_4+t e_6+(\Ima a) e_9-(\Rea a) e_{10}-b e_{12}+(\Ima d) e_{13}-(\Rea d) e_{14}
\end{align*}
\vspace{-8mm}
\begin{align*}
&v_{11}=e_3+t e_5+ 2 (\Rea a) e_9+2 (\Ima a) e_{10}+b e_{11} && u_{11}=-e_4+t e_6-2(\Ima a) e_9+2 (\Rea a) e_{10}+b e_{12}\\
&v_{12}=(1+t)e_7+(\Rea a) e_{11}+(\Ima a) e_{12}+2b e_{13} && u_{12}=(-1+t)e_8-(\Ima a) e_{11}+(\Rea a) e_{12}+2be_{14}\\
&v_{21}=(1+t)e_1+2b e_9+(\Rea d) e_{11}+(\Ima d) e_{12} && u_{21}=(1-t)e_2+2b e_{10}-(\Ima d) e_{11}+(\Rea d) e_{12}\\
&v_{22}=e_3+t e_5+b e_{11}+2(\Rea d) e_{13}+2(\Ima d) e_{14} && u_{22}=e_4-t e_6+b e_{12}-2(\Ima d) e_{13}+2(\Rea d) e_{14}.
\end{align*}
}%

\quad
Let us denote 
\begin{align*}
&w_3=v_{11}-w_{1}= -v_{22}+w_{1}= (\Rea a) e_9+(\Ima a) e_{10}-(\Rea d) e_{13}-(\Ima d) e_{14},\\
&w_4=u_{22}+u_{11}=-2(\Ima a) e_9+2 (\Rea a) e_{10}+2b e_{12}-2 (\Ima d) e_{13}+2(\Rea d) e_{14}.
\end{align*}
For $0<t<1$ we set
$V_1=\Span\{w_1,w_2,v_{12},v_{21},u_{11},u_{12},u_{21}\}$, $V_2=\Span\{w_{3},w_{4}\}$ and get that $\dim V_1=7$, 
$\dim V_2=\small{\left\{
\begin{array}{ll}
0,  &  a=b=d=0 \\
1,  &   a=d=0, b>0\\
2,  &  \textrm{otherwise}
\end{array}\right.}
$.
Next, if $t=1$ we take
$V_1=\Span\{w_1,w_2,v_{12},v_{12},u_{11}\}$, $V_2=\Span\{w_3,w_4,u_{21},u_{12}\}$. Observe that $\dim V_1=5$, 
$\dim V_2=\small{\left\{
\begin{array}{ll}
3,  &  d=b=0,a=1 \\
4,  &  0\neq 0,a=1 \textrm{ or } b>0,d=1
\end{array}\right.}
$.
Finally, when $t=0$ we set 
\begin{equation}
w_5=u_{11}+w_{2}= 2(\Rea b) e_9+ 2(\Ima b) e_{10}+(d-ta) e_{11}-2t(\Rea b) e_{13}- 2t(\Ima b) e_{14}
\end{equation}
with  
$V_1=\Span\{w_1,w_2,v_{12},v_{22},u_{12},u_{21}\}$, $V_2=\Span\{w_{3},w_4,w_{5}\}$.  
It follows that $\dim V_1=6$ and 
$\dim V_2=\small{\left\{
\begin{array}{ll}
0,  &  a=b=d=0 \\
1,  &  a=d=0, b>0\\
2,  &  ad= 0,b=0\\
3,  &  \textrm{otherwise}   
\end{array}\right.}
$.

\item \label{s0000}
$(A,B)=(0_2,a\oplus d)$, \quad $a,d\in \{1,0\}$, \,\, $a\geq d$

\quad 
By (\ref{vecspan}) we have
\small
\begin{align*}
&w_1= ae_7+de_{13} & &v_{11}=2ae_7        &  & u_{11}=2ae_8\\
& w_2=-ae_8-de_{14}& &v_{12}=ae_{9}+ae_{11}  &   & u_{12}=ae_{10}+ae_{12}\\
& & &v_{21}=de_{9}+de_{11}   &  & v_{21}=de_{10}+de_{12}\\
& & &v_{22}=de_{13}  &  & v_{21}=de_{14}.
\end{align*}
\normalsize
These vectors are contained in $\Span\{e_j\}_{7\leq j\leq 14}$ and $2w_1=v_{11}+2v_{22}$, $2w_1=v_{11}+2v_{22}$. It is now easy to compute the dimension of their linear span.
\end{enumerate}
This finishes the proof. 
\end{proof}

\begin{remark}
Sometimes it it is more informative to understand the stratification into bundles of matrices, i.e. sets of matrices having similar properties. Again, this notion was introduced first by Arnold \cite[Section 30]{Arnold}. 
Given a parameter set $\Lambda$ with smooth maps 
$\lambda \mapsto A_{\lambda}$,  
$\lambda \mapsto B_{\lambda}$,
one considers a bundle of pairs of matrices under the action $\Xi$ in (\ref{action1}), i.e. 
a union of orbits $\bigcup_{\lambda\in\Lambda}\Orb_{\Xi}(A_{\lambda},B_{\lambda})$. We set
\begin{equation}\label{actionL}
\Xi_{\Lambda}\colon \bigl(\mathbb{C}^*\times GL_n(\mathbb{C})\bigr)\times \Lambda\to \mathbb{C}^{n\times n}\times \mathbb{C}^{n\times n}_S, \quad (c,P,\lambda)\mapsto \Xi(c,P,A_{\lambda},B_{\lambda}),
\end{equation}
%
and observe that for any $g\in \mathbb{C}^*\times GL_n(\mathbb{C})$ we have $\Xi^g\circ \Xi_{\Lambda}=\Xi_{\Lambda} \circ(L_g\times \id_{\Lambda})$,
so the rank of $d_{(g,\lambda)}\Xi_{\Lambda}$ depends only on $\lambda\in \Lambda$.
In a similar manner as we computed the tangent space of an orbit, the tangent space of a bundle can be obtained. 
It follows that the generic $2\times 2$ pairs of one arbitrary and one symmetric matrix (forming a bundle with maximal dimension $14$) under the action (\ref{action1}) for $n=2$ are:
\begin{align*}
&\left( \begin{bmatrix}
0 & 1\\
\tau & 0
\end{bmatrix},
\begin{bmatrix}
e^{i\varphi} & b\\
b & \zeta
\end{bmatrix}\right),\quad 0 < \tau <1,\,\,\,b > 0,\, 0\leq \varphi <\pi,\, \zeta\in \mathbb{C},\\
&\left(\begin{bmatrix}
1 & 0\\
0 & e^{i\vartheta}
\end{bmatrix},
\begin{bmatrix}
a & r e^{i\varphi}\\
r e^{i\varphi} & d
\end{bmatrix}\right), \quad 0 < \theta <\pi,\,\,\,a, d>0,\, r\geq 0,\, 0\leq \varphi <\pi.
\end{align*}
Indeed, tangent spaces of these bundles are spanned by the tangent vectors in \ref{s100l} and \ref{s01t0} of the proof of Lemma \ref{lemalist} (for the appropriate parameters). 
\end{remark}

Note that using the list of normal forms in dimension $2$, recently a result on holomorphical flattenability of $CR$-nonminimal codimension $2$ real analytic submanifold near a complex point in $\mathbb{C}^{n}$, $n\geq 2$, was obtained through the works of Huang and Yin \cite{HY2,HY3}, Fang and
Huang \cite{FH}.

\section{Change of the normal form under small perturbations}\label{Sec3}

In this section we study how small deformations of a pair of one arbitrary and one symmetric matrix can change its orbit under the action (\ref{action1}) for $n=2$.

First recall that $(A,B)$, $(A',B')$ are in the same orbit with respect to the action (\ref{action1}) if and only if there exist $P\in GL_n(\mathbb{C}^n)$, $c\in \mathbb{C}^{*}$ such that $(A',B')=(cP^{*}AP,\overline{c}P^TBP)$. By real scaling $P$ we can assume that $|c|=1$, and after additional scaling $P$ by $\tfrac{1}{\sqrt{\overline{c}}}$ we eliminate the constant $\overline{c}$.
Thus the orbits of the action (\ref{action1}) are precisely the orbits of the action of $S^1 \times GL_n(\mathbb{C})$ acting on $\mathbb{C}^{n\times n}\times \mathbb{C}^{n\times n}_S $ by:
\begin{equation}\label{aAB}
\Psi\colon\big((c,P),(A,B)\big)\mapsto (cP^{*}AP,P^TBP), \quad P\in GL_2(\mathbb{C}),\, c\in S^{1}. 
\end{equation}
The projections are smooth actions as well (the second one is even holomorphic):
%
\begin{align}
\label{actionpsi1}
&\Psi_1\colon \big((c,P),(A,B)\big)\mapsto cP^{*}AP, \quad P\in GL_2(\mathbb{C}), \,c\in S^{1},\\
%
%
\label{actionpsi2}
&\Psi_2\colon\big((c,P),(A,B)\big)\mapsto P^TBP, \quad P\in GL_2(\mathbb{C}). 
\end{align}

Next, let $(A,B)$, $(A',B')$ be in the same orbit under the action (\ref{aAB}) ($A'=cP^{*}AP$, $B'=P^TBP$ for some $P\in GL_n(\mathbb{C})$, $c\in S^1$) and let $(E,F)$ be a perturbation of $(A,B)$:  
\[
cP^*(A+E)P=A'+cP^*EP, \qquad 
P^T(B+F)P=B'+P^TFP.
\]
A suitable perturbation of $(A,B)$ is in the orbit of an arbitrarily choosen perturbation of $(A',B')$.
It is thus sufficient to consider perturbations of normal forms.

Observe further that an arbitrarily small perturbation of $(\widetilde{A},\widetilde{B})$ is contained in $ \Orb_{\Psi}(A,B)$ if and only if $(\widetilde{A},\widetilde{B})$ (and hence the whole orbit $\Orb_{\Psi}(\widetilde{A},\widetilde{B})$) is contained in the closure of $ \Orb_{\Psi}(A,B)$. The same conclusion also holds for actions $\Psi_1$, $\Psi_2$.

For the sake of clarity the notion of a \emph{closure graph} for an action has been introduced. Given an action $\Phi$, the vertices in a closure graph for $\Phi$ are the orbits under $\Phi$, and there is an edge (a \emph{path}) from a vertex (an orbit) $\widetilde{\mathcal{V}}$ to a vertex (an orbit) $\mathcal{V}$ precisely when $\widetilde{\mathcal{V}}$ lies in the closure of $\mathcal{V}$. The path from $\widetilde{\mathcal{V}}$ to $\mathcal{V}$ is  denoted briefly by $\widetilde{\mathcal{V}}\to \mathcal{V}$. There are few evident properties of closure graphs:
\begin{itemize}
\item For every vertex $\mathcal{V}$, there exists  $\mathcal{V}\to \mathcal{V}$ (a trivial path),
\item Paths $\widehat{\mathcal{V}}\to\widetilde{\mathcal{V}}$ and $\widetilde{\mathcal{V}}\to \mathcal{V}$ imply the path $\widehat{\mathcal{V}}\to \mathcal{V}$ (usually not drawn).
\item If there is no path from $\widetilde{\mathcal{V}}$ to $\mathcal{V}$ (denoted by $\widetilde{\mathcal{V}}\not\to \mathcal{V}$), then for any vertex $\mathcal{W}$ it follows that either $\widetilde{\mathcal{V}}\not\to \mathcal{W}$ or $\mathcal{W}\not \to \mathcal{V}$ (or both).
\end{itemize}

To simplify the notation $\widetilde{\mathcal{V}}\to \mathcal{V}$, we usually write $\widetilde{V}\to V$, where $\widetilde{V}\in \widetilde{\mathcal{V}}$, $V\in \mathcal{V}$.
In case $\widetilde{V}\not\to V$ it is useful to know the distance of $\widetilde{V}$ from the orbit of $V$.  
We shall use the standard max norm $\|X\|=\max_{j,k\in \{1,\ldots n\}}|x_{j,k}|$, $X=[x_{j,k}]_{j,k=1}^{n}\in \mathbb{C}^{n\times n}$ To measure the distance between two matrices. This norm is not submultiplicative, but $\|XY\|\leq n\|X\|\,\|Y\|$ (see \cite[p. 342]{HornJohn}).

Proceed with basic properties of closure graph for the actions 
(\ref{aAB}), (\ref{actionpsi1}), (\ref{actionpsi2}). 

\begin{lemma}\label{pathlema}
Suppose $A,\widetilde{A},E \in \mathbb{C}^{n\times n}$, $B,\widetilde{B},F \in \mathbb{C}^{n\times n}_S$, and $p=|\det \widetilde{A}\det B|-|\det \widetilde{B}\det A|$.

\begin{enumerate}[label=(\arabic*),ref=\arabic*]

\item \label{pathlema1} There exists a path $\widetilde{A}\to A$ (a path $\widetilde{B}\to B$) in the closure graph for the action (\ref{actionpsi1}) (for the action (\ref{actionpsi2})) if and only if there exist sequences $P_j\in GL_n(\mathbb{C})$, $c_j\in S^{1}$ (a sequence $Q_j\in GL_n(\mathbb{C})$), such that
\begin{equation}\label{NC1}
c_jP_j^* A P_j\stackrel{j\to \infty}{\longrightarrow}\widetilde{A} \qquad (Q_j^T B Q_j\stackrel{j\to \infty}{\longrightarrow}\widetilde{B}).
\end{equation}
%
%
\begin{enumerate}[label=(\alph*),ref=\alph*]
\item \label{pathlema0} The existence of a path $\widetilde{A}\to A$ (a path $\widetilde{B}\to B$) implies the following:
\begin{enumerate}[label=(\roman*),ref=\roman*]
\item \label{pathlema1b} If $\det \widetilde{A}\neq 0$ (or $\det \widetilde{B}\neq 0$), then $\det A\neq 0$ (or $\det B\neq 0$). Apparently, $\widetilde{A}\neq 0$ (or $\widetilde{B}\neq 0$), then $ A\neq 0$ (or $ B\neq 0$).
\item \label{pathlema1a} $\dim \Orb_{\Psi_1}(A)>\dim \Orb_{\Psi_1}(\widetilde{A})\quad \big(\dim \Orb_{\Psi_2}(B)>\dim \Orb_{\Psi_2}(\widetilde{B})\big)$. 
%
\end{enumerate}
\item \label{pathlema1c} If $\det \widetilde{A}\neq 0$, $\det A= 0$, $\|E\|<\|\widetilde{A}^{-1}\|^{-1}$ ($\det \widetilde{B}\neq 0$, $\det B= 0$, $\|F\|< \|\widetilde{B}^{-1}\|^{-1}$), then $\widetilde{A}+F\not\in \Orb_{\Psi_1}(A)$ ($\widetilde{B}+F \not\in \Orb_{\Psi_2}(B)$). Trivially, if $\widetilde{A}\neq 0$, $\|E\|< \|\widetilde{A}\|$ (and $\widetilde{B}\neq 0$, $\|F\|<\|\widetilde{B}\|$), then $ \widetilde{A}+E\neq 0$ ($\widetilde{B}+F\neq 0$).
\end{enumerate}

\item \label{pathlema2} There exists a path $(\widetilde{A},\widetilde{B})\to (A,B)$ in the closure graph for an action (\ref{aAB}) precisely when there exist sequences $P_j\in GL_n(\mathbb{C})$, $c_j\in S^1$ such that
\begin{equation}\label{NC2}
c_jP_j^* A P_j\stackrel{j\to \infty}{\longrightarrow}\widetilde{A}, \qquad P_j^T B P_j\stackrel{j\to \infty}{\longrightarrow}\widetilde{B}.
\end{equation}
%
Morover, if $A$ and $\widetilde{A}$ ($B$ and $\widetilde{B}$) are in the same orbit and sufficiently close to each other, then it may be assumed in (\ref{NC2}) that $c_jP_j^* B P_j=\widetilde{A}$ ($P_j^T B P_j=\widetilde{B}$).
\begin{enumerate}[label=(\alph*),ref=\alph*]
\item \label{pathlemai2} When $(\widetilde{A},\widetilde{B})\to (A,B)$, it follows that
\begin{enumerate}[label=(\roman*),ref=\roman*]
%
%
\item \label{NC3} $\widetilde{A}\to A$,\quad $\widetilde{B}\to B$, \quad and \quad $p=0$. 
\item \label{NC4} $\dim \Orb_{\Psi}(A,B)>\dim \Orb_{\Psi}(\widetilde{A},\widetilde{B})$.
\end{enumerate}
\item \label{pathlema2ii} If $\widetilde{A}, \widetilde{B}, A, B\in GL_2(\mathbb{C})$ are such that $p\neq 0$ and $\|E\|<\max\big\{1, \frac{|p|}{4|\det B|(2\|\widetilde{A}\| +1 )} \big\}$, $\|F\|<\max\big\{1, 
\frac{|p|}{4|\det A|(2\|\widetilde{B}\| +1 )} \big\}$,
then $(\widetilde{A}+E,\widetilde{B}+F)\not \in \Orb_{\Psi}(A,B)$.
\end{enumerate}

\end{enumerate}

\end{lemma}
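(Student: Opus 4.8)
The plan is to treat the single-matrix statements \eqref{pathlema1} and its sub-items first, then bootstrap to the paired statement \eqref{pathlema2}. For \eqref{pathlema1}: the equivalence between the existence of a path $\widetilde A\to A$ and the existence of sequences $c_jP_j^*AP_j\to\widetilde A$ is just the definition of the closure graph together with the description of orbits of \eqref{actionpsi1}, so this is immediate; the same for $\widetilde B\to B$ using \eqref{actionpsi2}. For \eqref{pathlema1b}, I would use that the determinant is continuous and that $\det(cP^*AP)=\bar c\,|\det P|^2\det A$ up to the $S^1$-factor (more precisely $|\det(cP^*AP)|=|\det P|^2|\det A|$), so if $A=0$ or $\det A=0$ then every element of the orbit, and hence of its closure, has the same vanishing, contradicting $\widetilde A\neq 0$ or $\det\widetilde A\neq 0$; the $B$-case is identical with $\det(P^TBP)=(\det P)^2\det B$. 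For \eqref{pathlema1a}: orbits are immersed submanifolds (Proposition \ref{propA}\eqref{propAc}), and a nontrivial path $\widetilde A\to A$ means $\widetilde A\in\overline{\Orb(A)}\setminus\Orb(A)$ (they are distinct orbits), so $\dim\Orb(A)>\dim\Orb(\widetilde A)$ follows because the boundary of an orbit is a union of strictly lower-dimensional orbits — a standard fact for algebraic / Lie group actions that I would cite or justify from the local structure.

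For \eqref{pathlema1c}, the key quantitative input is the standard perturbation bound: if $\det\widetilde A\neq 0$ and $\|E\|<\|\widetilde A^{-1}\|^{-1}$ — where the inequality is with respect to a norm for which $\|\widetilde A^{-1}E\|\le\|\widetilde A^{-1}\|\,\|E\|<1$ after accounting for the factor $n$ in $\|XY\|\le n\|X\|\,\|Y\|$ — then $\widetilde A+E=\widetilde A(I+\widetilde A^{-1}E)$ is invertible. Since $\det A=0$, every matrix in $\Orb_{\Psi_1}(A)$ is singular, so the invertible matrix $\widetilde A+E$ cannot lie in it. The $B$-version and the trivial "$\widetilde A+E\neq 0$ when $\|E\|<\|\widetilde A\|$" assertion are the same argument with the triangle inequality.

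The substantive part is \eqref{pathlema2}. The equivalence \eqref{NC2} is again the definition of the closure graph for $\Psi$ plus the orbit description \eqref{aAB}; the "moreover" clause — that when $A,\widetilde A$ (resp.\ $B,\widetilde B$) are in the same orbit and close, one may take $c_jP_j^*AP_j=\widetilde A$ exactly — should follow by a normal-form/continuity argument: being in one orbit, fix $P_0,c_0$ with $c_0P_0^*AP_0=\widetilde A$, and then absorb the approximating sequence for the $B$-component into the stabilizer of $\widetilde A$ (this uses that the stabilizer acts and that nearby solutions can be adjusted — I expect this is where some care is needed). For \eqref{pathlemai2}: $\widetilde A\to A$ and $\widetilde B\to B$ are immediate by projecting \eqref{NC2}; the relation $p=0$ comes from taking moduli of determinants in \eqref{NC2} — writing $\alpha=|\det P_j|^2$, one gets $|\det A|\,\alpha\to|\det\widetilde A|$ and $|\det B|\,\alpha\to|\det\widetilde B|$, so $|\det\widetilde A|\,|\det B|=|\det A|\,|\det\widetilde B|$, i.e.\ $p=0$ (handling the degenerate cases $\det A=\det B=0$ separately); the dimension inequality \eqref{NC4} is as in \eqref{pathlema1a} applied to the action $\Psi$. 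Finally \eqref{pathlema2ii} is the quantitative contrapositive of "$p=0$": if $(\widetilde A+E,\widetilde B+F)\in\Orb_\Psi(A,B)$ then $|\det(\widetilde A+E)\det B|=|\det(\widetilde B+F)\det A|$, and I would bound $\big|\det(\widetilde A+E)-\det\widetilde A\big|$ in terms of $\|E\|$ (for $2\times 2$, $\det(\widetilde A+E)=\det\widetilde A+(\text{linear in }E)+\det E$, so the error is $\le C(\|\widetilde A\|+\|E\|)\|E\|\le C(2\|\widetilde A\|+1)\|E\|$ once $\|E\|<1$), multiply by $|\det B|$, do the same for $F$ and $|\det A|$, and conclude that $\big||\det(\widetilde A+E)\det B|-|\det(\widetilde B+F)\det A|\big|\ge|p|-\tfrac{|p|}{4}-\tfrac{|p|}{4}>0$, a contradiction. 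I expect the main obstacle to be getting the constants in \eqref{pathlema2ii} sharp enough to match the stated bounds — i.e.\ the explicit $2\times 2$ determinant expansion and the bookkeeping with the non-submultiplicative max-norm — rather than any conceptual difficulty; the "moreover" clause of \eqref{pathlema2} is the only place requiring a genuinely new idea.
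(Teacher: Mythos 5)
Your route coincides with the paper's for almost every item: the equivalences in (\ref{pathlema1}) and (\ref{pathlema2}) from the definition of the closure graph, the determinant/modulus argument for (\ref{pathlema1})(\ref{pathlema0})(\ref{pathlema1b}) and for $p=0$, the Zariski-closure dimension argument for the orbit-dimension inequalities, and the explicit $2\times2$ determinant expansion $\big|\det(X+D)-\det X\big|\le \|D\|(4\|X\|+2\|D\|)$ feeding into (\ref{pathlema2})(\ref{pathlema2ii}). Two places fall short. The substantive one is the ``moreover'' clause of (\ref{pathlema2}): your proposal to ``absorb the approximating sequence into the stabilizer of $\widetilde{A}$'' does not supply the needed mechanism. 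The issue is not the stabilizer but the perturbation: given $c_jP_j^*AP_j=\widetilde{A}+E_j$ with $E_j\to 0$ and $\widetilde{A}+E_j$ in the orbit of $\widetilde{A}$, you must produce a \emph{small} group element $(c,P)$ (with $P$ near the identity) carrying $\widetilde{A}+E_j$ back to $\widetilde{A}$, so that applying $(c,P)^{-1}$ to the pair fixes the first coordinate while perturbing the second coordinate only slightly. The paper gets this from Proposition \ref{propA}(\ref{propAb}): the orbit map $\Phi_{\widetilde{A}}$ has constant rank, hence is locally a submersion, which yields the local Lipschitz property (nearby points of the orbit are reached by group elements near the identity). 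Without that input, a solution $(c,P)$ of $cP^*\widetilde{A}P=\widetilde{A}+E_j$ could a priori be far from the identity and destroy the $B$-coordinate.

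A secondary, quantitative issue is (\ref{pathlema1})(\ref{pathlema1c}): with the max norm the Neumann-series bound reads $\|\widetilde{A}^{-1}E\|\le n\|\widetilde{A}^{-1}\|\,\|E\|$, so your argument only gives nonsingularity of $\widetilde{A}+E$ for $\|E\|<\tfrac{1}{n}\|\widetilde{A}^{-1}\|^{-1}$, which misses the stated threshold. The paper instead invokes the exact distance-to-singularity formula for this norm, $\mathrm{dist}(X,\{\det=0\})=\|X^{-1}\|^{-1}$ (\cite[Problem 5.6.P47]{HornJohn}), which gives the claimed constant without the factor $n$.
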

\vspace{-3mm}
\begin{proof}
By definition $\widetilde{A}\to A$ ($\widetilde{B}\to B$) if and only if $\widetilde{A}+F_j\in \Orb_{\psi_1}(A)$  ($\widetilde{B}+G_j\in \Orb_{\psi_2}(B)$) for some $G_j\to 0$ ($F_j\to 0$). It is equivalent to (\ref{NC1}) (see (\ref{actionpsi1}), (\ref{actionpsi2})), so the first part of (\ref{pathlema1}) is proved.
Apparently, $(\widetilde{A},\widetilde{B})\to (A,B)$ 
is then equivalent to (\ref{NC2}).

Since the orbit map of the action $\Psi_1$ (the action $\Psi_2$) is by Lemma \ref{propA} (\ref{propAb}) of constant rank and hence locally a submersion (see e.g. \cite[Theorem II.7.1]{Boot}), this action has the so-called local Lipschitz property, i.e. if $A$, $A'$ ($B$, $B'$) are sufficiently close and $cP^{*}AP=A'$ ($P^TBP=B'$), then $P$ can be chosen near to identity and $c$ near $1$. 
For any sufficiently small $E$ (or $F$) such that $\widetilde{A}$,  $\widetilde{A}+E$ ($\widetilde{B}$, $\widetilde{B}+F$) are in the same orbit, then there exists some $P$ close to the identity-matrix and $c$ close to $1$, so that $(\widetilde{A}+E,\widetilde{B}+F)$ is equal to $(cP^{*}\widetilde{A}P,\widetilde{B}+F)=(\widetilde{A},P^{-T}(\widetilde{B}+F)P^{-1})$ \big(equal to $(\widetilde{A}+E,P^{T}\widetilde{B}P)=(P^{-*}(\widetilde{A}+E)P^{-1},\widetilde{B})$\big).
As the inverse map $X\to X^{-1}$  
is continuous, $P^{-1}$ is close to identity-matrix, too. 
Hence $(\widetilde{A}+E,\widetilde{B}+F)$ is in the orbit of $(\widetilde{A},B+F')$ (or $(\widetilde{A}+E',\widetilde{B})$), where $F'=P^{-*}FP$ ($E'=P^{-T}EP$) is arbitrarily close to the zero-matrix. This concludes the proof of the first part of (\ref{pathlema2}).

Next, applying the determinant to (\ref{NC1}) we get 
\begin{equation}\label{detcP}
c_j^2|\det P_j|^2\det \widetilde{A}\stackrel{j\to \infty}{\longrightarrow} \det A, \qquad (\det Q_j^2\det \widetilde{B}\stackrel{j\to \infty}{\longrightarrow} \det B)
\end{equation}
%
for some $\{P_j\}_j\subset GL_n(\mathbb{C})$, $\{c_j\}_j\subset S^1$ ($\{Q_j\}_j\subset GL_n(\mathbb{C})$). 
This implies (\ref{pathlema1}) (\ref{pathlema0}) (\ref{pathlema1b}).

A necessary condition for $(\widetilde{A},\widetilde{B})$ to be in the closure of $\Orb_{\Psi} (A,B)$ is that $\widetilde{A}$ and $\widetilde{B}$ are in the closures of $\Orb_{\Psi_1} (A)$ and $\Orb_{\Psi_2} (B)$, respectively. Further, by multiplying the limits in (\ref{detcP}) for $P_j=Q_j$ by $\det B$ or $\det A$, and by  comparing the absolute values of the expressions, we deduce (\ref{pathlema2}) (\ref{pathlemai2}) (\ref{NC3}).

It is well known that the distance from a nonsingular matrix $X$ to the nearest singular matrix with respect to the norm $\|\cdot\|$  is equal to $\|X^{-1}\|^{-1}$ (see e.g. \cite[Problem  5.6.P47]{HornJohn}). Thus (\ref{pathlema1}) (\ref{pathlema1c}) follows.
Next, applying the triangle inequality, estimating the absolute values of the entries of the matrices by the max norm of the matrices, and by slightly simplifying, we obtain for $X,D\in \mathbb{C}^{2\times 2}$:
\begin{align}\label{detxe}
\big||\det (X+D)|-|\det (X)|\big|& \leq \big|\det (X+D)-\det (X )\big|\leq 
\|D\|\big (4\|X\| +2\|D\|\big).
\end{align}
Let $\widetilde{A}$, $B$, $\widetilde{B}$, $A$ be nonsingular matrices,  
%
$
p=|\det \widetilde{A}\det B|- |\det \widetilde{B}\det A|\neq 0.
$
%
Using (\ref{detxe}) for $X=\widetilde{A}$, $D=E$ and $X=\widetilde{B}$, $D=F$ with $\|E\|,\|F\|\leq 1$, respectively, 
we get
\begin{align}\label{ocenaAB}
\big||\det (\widetilde{A}+E)\det B|-|\det (\widetilde{A})\det B|\big| 
                                            & \leq \big( 4 \|\widetilde{A}\|\, \|E\| +2\|E\| ^2 \big)|\det B| \\
                                            & \leq \|E\|\,|\det B|\big(4\|\widetilde{A}\| +2 \big),\nonumber
\end{align}
%
\begin{align}\label{ocenaBA}
\big||\det (\widetilde{B}+F)\det A|-|\det (\widetilde{B})\det A|\big| 
\leq \|F\|\,|\det A|\big(4\|\widetilde{B}\| +2 \big).
\end{align}
To estimate the left-hand sides of the above inequalities from above by $\frac{|p|}{2}$ it suffices to take $\|E\|<\frac{|p|}{4|\det B|(2\|\widetilde{A}\| +1 )}$ and $\|F\|<\frac{|p|}{4|\det A|(2\|\widetilde{B}\| +1 ) }$. By combining (\ref{ocenaAB}), (\ref{ocenaBA}) and applying the triangle inequality we then conclude 
\[
\Big|\big|\det (\widetilde{A}+E)\det B\big|- \big|\det (\widetilde{B}+F)\det A\big|\Big|>0.
\]
By (\ref{pathlema2}) (\ref{pathlemai2}) (\ref{NC3}) (allready proved) we have $(\widetilde{A}+E,\widetilde{B}+F)\not\to (A,B)$ and this gives (\ref{pathlema2}) (\ref{pathlema2ii}).

It is left to show the orbit-dimension inequalities (\ref{pathlema1}) (\ref{pathlema0}) (\ref{pathlema1a}) and (\ref{pathlema2}) (\ref{pathlemai2}) (\ref{NC4}). 
Since orbits of $\Psi$, $\Psi_1$, $\Psi_2$ can be seen as nonsingular algebraic subsets in Euclidean space (zero loci of polynomials), 
these facts can be deduced by using a few classical results in real (complex) algebraic geometry \cite[Propositions 2.8.13,2.8.14]{RAG} (or \cite[Propositions 21.4.3, 21.4.5]{Tauvel}, \cite[Exercise 14.1.]{Harris}). Indeed, orbits $\Orb_{\Psi} (\widetilde{A},\widetilde{B})$, $\Orb_{\Psi_1}(\widetilde{A})$, $\Orb_{\Psi_2}(\widetilde{B})$ are contained in the closures (also with respect to a coarser Zariski topology) of orbits $\Orb_{\Psi}(A,B)$, $\Orb_{\Psi_1}(A)$, $\Orb_{\Psi_2}(B)$, respectively. Hence algebraic dimensions of orbits mentioned first are strictly smaller than algebraic dimensions of the later orbits. 
Finally, orbits are locally regular submanifolds and their manifold dimensions agree with their algebraic dimensions.
\end{proof}

\begin{remark}\label{remarkocena}
Lemma \ref{pathlema} provides a quantitative information on the distance from an element to another orbit. We observe that it suffices to consider only normal forms.
Given any $Q\in GL_n(\mathbb{C})$ the induced operator norms of maps $\mathbb{C}^{n\times n}\to \mathbb{C}^{n\times n}$, $X\mapsto Q^{*}XQ$ and $\mathbb{C}^{n\times n}_S\to \mathbb{C}^{n\times n}_S$, $X\mapsto Q^{T}XQ$ are bounded from above by $n^{2}\|Q^{*}\|\,\|Q\|$ and $n^{2}\|Q^{T}\|\,\|Q\|$, respectively. 
If $(\widetilde{A},\widetilde{B})$, $(\widetilde{A},\widetilde{B})$ are in the same orbit (i.e. $\widetilde{A}=\widehat{c}Q^{*}\widehat{A}Q$, $\widetilde{B}=Q^{T}\widehat{B}Q$ with $Q\in GL_n(\mathbb{C})$, $\widehat{c}\in S^{1}$), then for any $P\in GL_n(\mathbb{C})$, $c\in S^{1}$ we get:
\begin{align*}
&\|\widehat{A}-cP^{*}AP\|\geq \tfrac{1}{n^{2}\|Q^{*}\|\|Q\|}\|Q^{*}\widehat{A}Q-cQ^*P^{*}APQ\|\geq \tfrac{1}{n^{2}|\widehat{c}|\,\|Q^{*}\|\,\|Q\|}\|\widetilde{A}-c\widehat{c}(PQ)^{*}APQ\|,\\
&\|\widehat{B}-P^{T}BP\|\geq \tfrac{1}{n^{2}\|Q^{T}\|\,\|Q\|}\|Q^{T}\widehat{B}Q-Q^TP^{T}BPQ\|\geq \tfrac{1}{n^{2}\|Q^{T}\|\,\|Q\|}\big\|\widetilde{B}-(PQ)^{T}B(PQ)\big\|.
\end{align*}

When inspecting $(\widetilde{A},\widetilde{B})\to (A,B)$ where either $A$, $\widetilde{A}$ or $B$, $\widetilde{B}$ are in the same orbit and sufficiently close, it is by (\ref{pathlema2}) enough to analyse perturbations of the matrix $\widetilde{A}$ (the matrix $\widetilde{B}$). Unfortunately, we do not know how close the matrices should be since the constant rank theorem (even the quantitative version \cite[Theorem 2.9.4]{Hub}) does not provide  the size of the local charts which define the orbits.  
\end{remark}

By Autonne-Takagi factorization (see e.g. \cite[Corolarry
4.4.4]{HornJohn}), any complex symmetric matrix is
unitary $T$-congruent to a diagonal matrix with non-negative diagonal entries, hence $T$-congruent to a diagonal matrix with ones and zeros on the diagonal. 
Therefore $2\times 2$ symmetric matrices with respect to $T$-conjugacy consist of three orbits, each containing matrices of the same rank. Their closure graph is thus very simple. (For closure graphs of all $2\times 2$ or $3\times 3$ matrices see \cite{FKS1}.)

\begin{lemma}\label{lemapsi2}
The closure graph for the action (\ref{actionpsi2}) ($T$-conjugacy on $\mathbb{C}^{2\times 2}_S$) is 
\[
0_2 \to 1\oplus 0\to I_2,
\]
where $\dim \Orb (1\oplus 0)=2$, $\dim \Orb (I_2)=3$. 
Moreover, if $\widetilde{B}$, $B$ are vertices in the above graph, and such $\widetilde{B}\not \to B$ with $P^T BP=\widetilde{B}+F$ for some $P\in GL_2(\mathbb{C})$, then $\|F\|\geq 1$.
\end{lemma}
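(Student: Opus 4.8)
The plan is to first settle the closure graph itself, then handle the quantitative "distance $\geq 1$" statement. For the graph: by Autonne--Takagi every $B\in\mathbb{C}^{2\times 2}_S$ is $T$-congruent to a diagonal matrix with entries in $\{0,1\}$, so there are exactly three orbits, represented by $0_2$, $1\oplus 0$, $I_2$, and each orbit consists precisely of the symmetric matrices of a fixed rank ($0$, $1$, $2$ respectively). A path $\widetilde B\to B$ means $\widetilde B$ lies in the closure of $\Orb_{\Psi_2}(B)$; since $\operatorname{rank}$ is lower semicontinuous, $\widetilde B\to B$ forces $\operatorname{rank}\widetilde B\le\operatorname{rank}B$, so the only possible nontrivial paths go up in rank. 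Conversely $0_2\to 1\oplus 0$ and $1\oplus 0\to I_2$ hold because $\operatorname{diag}(1,\varepsilon)$ and $\operatorname{diag}(\varepsilon,0)$ are $T$-congruent to $I_2$ and $1\oplus 0$ for all $\varepsilon\neq 0$ and converge as $\varepsilon\to 0$; composing gives $0_2\to I_2$. The orbit dimensions can be read off from Lemma \ref{lemalist} (they appear there) or computed directly from the span of $\{E_{kj}B+BE_{jk}\}$ as in the proof of Lemma \ref{lemalist}; this gives $\dim\Orb(1\oplus 0)=2$ and $\dim\Orb(I_2)=3$, consistent with Lemma \ref{pathlema}\,(\ref{pathlema1})(\ref{pathlema0})(\ref{pathlema1a}).

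For the quantitative part, I would go case by case on the pairs $(\widetilde B,B)$ of distinct vertices with $\widetilde B\not\to B$; by the graph these are exactly $(1\oplus 0,\,0_2)$, $(I_2,\,0_2)$, and $(I_2,\,1\oplus 0)$ — that is, $\operatorname{rank}\widetilde B>\operatorname{rank}B$. Suppose $P^TBP=\widetilde B+F$ for some $P\in GL_2(\mathbb{C})$. Since $\operatorname{rank}(P^TBP)=\operatorname{rank}B<\operatorname{rank}\widetilde B$, the perturbation $F$ must drop the rank of $\widetilde B$: the matrix $\widetilde B+F$ is singular (in the first two cases it is in fact $0_2$). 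When $\widetilde B=I_2$: $I_2+F$ singular means $-1$ is an eigenvalue of $F$, hence $\|F\|\ge\rho(F)\ge 1$ where $\rho$ is the spectral radius — but the max-norm is not submultiplicative, so instead I would argue directly: $\det(I_2+F)=0$ expands to $(1+f_{11})(1+f_{22})-f_{12}f_{21}=0$, and if all $|f_{jk}|<1$ then $|(1+f_{11})(1+f_{22})|>0$ while $|f_{12}f_{21}|<1$; one checks $\operatorname{Re}\big((1+f_{11})(1+f_{22})\big)>1-|f_{11}|-|f_{22}|+\cdots$ — the clean route is to use the fact from the excerpt that the $\|\cdot\|$-distance from a nonsingular matrix $X$ to the nearest singular matrix equals $\|X^{-1}\|^{-1}$: here $X=I_2$, $X^{-1}=I_2$, $\|X^{-1}\|^{-1}=1$, so $I_2+F$ singular forces $\|F\|\ge 1$. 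The same tool handles $\widetilde B=1\oplus 0$ versus $B=0_2$, where $\widetilde B+F=0_2$ gives $F=-(1\oplus 0)$, so $\|F\|=1$ directly.

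The remaining case $\widetilde B=I_2$, $B=1\oplus 0$ is the one that needs the most care, since $\widetilde B+F$ is only required to have rank $1$, not to vanish, so the nonsingular-distance trick does not immediately apply. Here I would observe that $P^T(1\oplus 0)P$ is a rank-one symmetric matrix of the form $\xi\xi^T$ for some column $\xi\in\mathbb{C}^2$ (namely $\xi$ the first row of $P$, transposed), so $F=\xi\xi^T-I_2$; minimizing $\|\xi\xi^T-I_2\|=\max\{|\xi_1^2-1|,|\xi_1\xi_2|,|\xi_2^2-1|\}$ over $\xi\in\mathbb{C}^2$, I claim the minimum is $1$: if $|\xi_1\xi_2|<1$ then not both $|\xi_1|,|\xi_2|$ can be large, and pushing one of $|\xi_j^2-1|$ below $1$ forces $|\xi_j|$ near $1$, which (with $|\xi_1\xi_2|$ small) forces the other $|\xi_k|$ small and hence $|\xi_k^2-1|\ge 1-|\xi_k|^2$ close to $1$; a short computation with the substitution $s=|\xi_1|^2,\ t=|\xi_2|^2$ shows $\max\{|s e^{i\alpha}-1|,\sqrt{st},|te^{i\beta}-1|\}\ge 1$ always, with the worst case $\xi_2=0$ giving exactly $1$. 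I expect this last case to be the main obstacle — everything else reduces to lower semicontinuity of rank and the singular-distance formula, but bounding $\|\xi\xi^T-I_2\|$ from below requires an honest (if elementary) optimization. Writing it cleanly, one can reduce to showing: if $\max\{|\xi_1^2-1|,|\xi_2^2-1|\}<1$ then $|\xi_1|^2+|\xi_2|^2>$ something forcing $|\xi_1\xi_2|$ to compensate, and conclude $\|F\|\ge 1$ with equality attained.
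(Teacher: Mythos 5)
Your handling of the closure graph (Autonne--Takagi, lower semicontinuity of rank, explicit degenerations $\diag(1,\varepsilon)$ and $\diag(\varepsilon,0)$), of the orbit dimensions, and of the two cases with $B=0_2$ (where $P^TBP=0_2$ forces $F=-\widetilde{B}$ and hence $\|F\|=1$ exactly) is fine and is essentially the paper's route, which disposes of this lemma via the Autonne--Takagi paragraph preceding it together with Lemma \ref{pathlema} (\ref{pathlema1}) (\ref{pathlema1c}).

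The gap is in your last case, $\widetilde{B}=I_2$, $B=1\oplus 0$. The optimization you claim --- that $\max\{|\xi_1^2-1|,|\xi_1\xi_2|,|\xi_2^2-1|\}\geq 1$ for all $\xi\in\mathbb{C}^2$, with worst case $\xi_2=0$ --- is false: for $\xi_1=\xi_2=\tfrac{1}{\sqrt2}$ all three quantities equal $\tfrac12$. Concretely, $P=\begin{bsmallmatrix}1/\sqrt2 & 1/\sqrt2\\ 0 & 1\end{bsmallmatrix}\in GL_2(\mathbb{C})$ gives $P^T(1\oplus 0)P=\tfrac12\begin{bsmallmatrix}1&1\\1&1\end{bsmallmatrix}=I_2+F$ with $\|F\|=\tfrac12<1$. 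The sharp bound in this case is $\tfrac12$ (if all three moduli are $\leq t$ then $|\xi_1|^2,|\xi_2|^2\geq 1-t$, so $t\geq|\xi_1\xi_2|\geq 1-t$), so no argument can deliver $\|F\|\geq 1$ here --- the stated constant in the lemma itself fails for this pair. The root cause is that the identity ``distance from a nonsingular $X$ to the nearest singular matrix equals $\|X^{-1}\|^{-1}$'' is an induced-operator-norm statement and does not hold for the entrywise max norm used in this paper: the max-norm distance from $I_2$ to the singular matrices is $\tfrac12$, not $1$ (witness $\tfrac12\begin{bsmallmatrix}1&1\\1&1\end{bsmallmatrix}$; in general one only gets $\|F\|\geq (n^2\|X^{-1}\|)^{-1}$ from $\|XY\|\leq n\|X\|\,\|Y\|$). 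Your instinct that this case needs the most care was correct, but your stated reason --- that rank-one targets are not covered by the singular-distance formula --- is not the issue: rank-one matrices are singular, so the formula, were it valid for this norm, would apply directly; the actual problem is that it is not valid for this norm, and the honest optimization yields $\tfrac12$ rather than $1$.
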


The (non)existence of most paths in the closure graph for the action $\Psi_1$ in (\ref{actionpsi1}) follows immediately from the (non)existence of paths in the closure graph for $*$-conjugacy (\cite[Theorem 2.2]{FKS2}). The remaining paths are treated by a slight adaptation of the $*$-cojugacy case. By a careful analysis we provide neccesarry (sufficient) conditions for the existing paths; see Lemma \ref{lemapsi1} (its proof is given in Sec. \ref{proofL}). These turn out to be essential in the proof of Theorem \ref{izrek}. Furthermore, if $\widetilde{A}\not\to A$ we find a lower bound for the distance from $\widetilde{A}$ to $\Orb_{\Psi_1}(A)$. Note that normal forms for $\Psi_1$ were first observed by Coffman \cite[Theorem 4.3]{Coff2}, and by calculating their stabilizers eventually normal forms for the action (\ref{aAB}) were obtained (\cite[Subsection 2.4]{Coff}).

\begin{lemma}\label{lemapsi1}
Closure graph for the action (\ref{actionpsi1}) is the following ($0<\theta<\pi$, \,\,$0<\tau<1$):
\vspace{-1mm}
\small
\begin{equation}\label{graph1}
\begin{tikzcd}
1\oplus e^{i\theta} & 
\begin{bsmallmatrix}
0 & 1\\
\tau & 0
\end{bsmallmatrix} & 
\begin{bsmallmatrix}
0 & 1\\
1 & i
\end{bsmallmatrix} & \qquad 7\\
\begin{bsmallmatrix}
0 & 1\\
0 & 0
\end{bsmallmatrix}&  & & \qquad 6 \\
 & 
1\oplus -1\arrow[uu, ""] & 
1_2
& \qquad 5 \\
& 
1\oplus 0
\arrow[u, ""] \arrow[ur, ""] \arrow[uul, ""] \arrow[uuur, ""]\arrow[uuul, ""] & & \qquad 4\\
 & 
0_2
\arrow[u, ""]  & & \qquad 0
\end{tikzcd}
\end{equation}
\normalsize
\vspace{-1mm}
\noindent
The closure graph contains an infinite set of vertices corresponding to the orbits with normal forms 
$1\oplus e^{i\theta}$, 
$\begin{bsmallmatrix}
0 & 1\\
\tau & 0
\end{bsmallmatrix}$,
indexed by the parameters $\theta$, $\tau$, respectively.
Orbits at the same horizontal level have the same dimension and these are indicated on the right. 

Furthermore, let $\widetilde{A}$, $A$ be normal forms in (\ref{graph1}), and let $E=cP^{*} AP-\widetilde{A}$ for some $c\in S^{1}$, $P=\begin{bsmallmatrix}
x & y\\
u & v
\end{bsmallmatrix}\in GL_2(\mathbb{C})$, $E\in \mathbb{C}^{2\times 2}$. Then one of the following statements holds:
\begin{enumerate}
\item \label{lemapsi1a} If $\widetilde{A}\not\to A$, then there exists a positive constant $\mu$ such that $\|E\|\geq \mu$.
\item \label{lemapsi11} If $\widetilde{A}\to A$, then there is a positive constant $\nu$ such that the moduli of the expressions (depending on $c$, $P$) listed in the fourth column (and in the line corresponding to $\widetilde{A}$, $A$) of the table below are bounded from above by $\nu\|E\|$. (If $A,\widetilde{A}\in GL_2(\mathbb{C})$ then also $\|E\|\leq \tfrac{|\det \widetilde{A}|}{8\|\widetilde{A}\|+4}$ is assumed.) 
Conversely, if $\widetilde{A}$, $A$ correspond to any of the lines  (C\ref{r3}), (C\ref{r4}), (C\ref{r7}), (C\ref{r10}), then there exists a positive constant $\rho$ such that: if the moduli of expressions listed in the fourth column of this line  
are bounded from above by $s\in (0,1]$, then $\|E\|\leq \rho \sqrt{s}$.
\end{enumerate}
\vspace{-3mm}
\small
\begin{center}		
\begin{tabular}{|c|c|c|l|l|}
\hline
 &	$\widetilde{A}$ & $A$ &              &    \\
\hline
\refstepcounter{rownumber}
\label{r3}  C\ref{r3}  & 
$
\alpha \oplus 0
$
&
$
1 \oplus e^{i\theta} 
$
& 
 $|x|^2+e^{i\theta}|u|^2-c^{-1}\alpha$, $y^2,v^2$ & $\scriptstyle{\alpha\in\{0,1\}}$, $\scriptstyle{0\leq \theta<\pi}$\\
		\hline
\refstepcounter{rownumber}
\label{r4}  C\ref{r4} &
$
\alpha\oplus 0
$ &
$
\begin{bsmallmatrix}
0 & 1 \\
\tau &  0
\end{bsmallmatrix}
$
&
$\overline{y}v$, $\overline{x}v$, $\overline{u}y$, $(1+\tau)\Rea(\overline{x}u)+i(1-\tau)\Ima(\overline{x}u)-\frac{\alpha}{c}$ & $\scriptstyle{0\leq \tau< 1,\alpha\in\{0,1\}}$\\
\hline
\refstepcounter{rownumber}
\label{r7}  C\ref{r7}  &	
$
\begin{bsmallmatrix}
\alpha & \beta \\
\beta & \omega 
\end{bsmallmatrix}
$
&
$
\begin{bsmallmatrix}
0 & 1 \\
1 & 0
\end{bsmallmatrix}
$ 
&        $2\Rea(\overline{y}v)-(-1)^{k}\omega$, $2\Rea(\overline{x}u)-(-1)^{k}\alpha$ & $\scriptstyle{k\in \mathbb{Z}}$;\,\,$\scriptscriptstyle{\alpha=\omega=0,\beta=1\textrm{ or }}$  \\
&    &	    &  $(\overline{x}v+\overline{u}y)-(-1)^{k}\beta$   &  $\scriptscriptstyle{\beta=0,\alpha\in \{0,1\},\omega\in\{0,\pm\alpha\}}$ \\
\hline	
\refstepcounter{rownumber}
\label{r10}  C\ref{r10}  &	$
\alpha\oplus 0
$
&
$
\begin{bsmallmatrix}
0 & 1 \\
1 & i
\end{bsmallmatrix}
$ 
&
$\overline{x}v+\overline{u}y$, $\overline{u}v$, $\Rea(\overline{y}u)$, $v^{2}$, $2\Rea(\overline{x}u)+i|u|^{2}-\frac{\alpha}{c}$ & $\scriptstyle{\alpha\in \{0,1\}}$   \\
\hline
 \refstepcounter{rownumber}
\label{r1}  C\ref{r1}  &
$
1 \oplus e^{i\theta} 
$ &
$
1 \oplus e^{i\theta} 
$
&
$u^{2},y^{2}$, $|x|^{2}-1$, $|v|^{2}-1$ 
  &   $\scriptstyle{0<\theta<\pi}$
\\
		\hline
\refstepcounter{rownumber}
\label{r5}  C\ref{r5} &	
$
\begin{bsmallmatrix}
\alpha & \beta \\
\beta & \omega 
\end{bsmallmatrix}
$ &
$
\begin{bsmallmatrix}
0 & 1 \\
1 & i 
\end{bsmallmatrix}
$ 
& $2\Rea(\overline{x}u)-(-1)^{k}\alpha$, $2\Rea(\overline{y}v)-(-1)^k\Rea (\omega)$ &  $\scriptstyle{k\in \mathbb{Z}}$;\,$\scriptscriptstyle{\beta=1,\alpha=0,\omega\in\{0,i\}}$ \\
&   &  	 & 
$\overline{x}v+\overline{u}y-(-1)^k\beta$, $u^{2}$ , $|v|^2-(-1)^k\Ima (\omega)$ &  $\scriptscriptstyle{\textrm{ or }\beta=0,-\omega=\alpha\in \{0,1\}}$
   \\
\hline
\refstepcounter{rownumber}
\label{r6}  C\ref{r6}  &
$
\begin{bsmallmatrix}
0 & 1 \\
\tau & 0
\end{bsmallmatrix}
$
&
$
\begin{bsmallmatrix}
0 & 1 \\
\tau & 0
\end{bsmallmatrix}
$  & $\overline{x}u$, \,$\overline{y}v$,\, $\overline{y}u$\, $\overline{v}x-c^{-1}$ &   $\scriptstyle{0\leq\tau< 1}$ \\
\cline{4-5}
  &   &  &   $c-(-1)^{k}$ &   $\scriptstyle{0<\tau< 1}$,\,\,$\scriptstyle{k\in \mathbb{Z}}$ \\   
 \hline
\refstepcounter{rownumber}
\label{r9}  C\ref{r9}  &	$
\alpha\oplus \omega
$
&
$
1\oplus \sigma
$
 &
 $(|x|^2+\sigma|u|^2)-c^{-1}\alpha$, $\overline{x}y+\sigma \overline{u} v$  &  $\scriptscriptstyle{\alpha=1,\omega\in \{\sigma,0\} \textrm{ or } \alpha=\omega=0}$  \\
&  &   	& $(|y|^2+\sigma |v|^2)-c^{-1}\omega$     & $\scriptstyle{\sigma\in\{1,-1\}}$ \\
\cline{4-5}
  &   &  &   $c-(-1)^{k}$ \quad ($k=0\textrm{ for }\sigma=1$) &  $\scriptstyle{\sigma=\omega\in\{1,-1\}}$, $\scriptstyle{k\in \mathbb{Z}}$ \\
\hline
\refstepcounter{rownumber}
\label{r11}  C\ref{r11}  & 	$
\alpha\oplus 0
$ &
$
1\oplus 0
$  
&   $y^2$, $|x|^2-\alpha$  &  $\scriptstyle{\alpha\in\{0,1\}}$ \\
\cline{4-5}
 &  &  & $c- 1$   &  $\scriptstyle{\alpha=1}$, $\scriptstyle{\|E\|\leq \frac{1}{2} }$\\
\hline
\refstepcounter{rownumber}
\label{r12}  C\ref{r12}  &	$
\begin{bsmallmatrix}
0 & 1 \\
1 & 0
\end{bsmallmatrix}
$ &
$
1\oplus -1
$ 
& $\overline{x}y-\overline{u}v-(-1)^{k}$, $|x|^2-|u|^2$, $|y|^2-|v|^2$  &  \\
		\hline		
	\end{tabular}
\end{center}
\normalsize
\end{lemma}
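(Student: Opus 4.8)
The plan is to reduce the closure graph \eqref{graph1} to the known closure graph for $*$-congruence, to read the entrywise bounds off explicit conjugations, and to dispose of part~(\ref{lemapsi1a}) by a soft distance argument.

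\emph{The closure graph.} Since $\Orb_{\Psi_1}(A)=\bigcup_{c\in S^1}c\cdot\Orb_{*}(A)$, where $\Orb_{*}$ denotes the $*$-congruence orbit, and $S^1$ is compact, a one-line argument (if $c_jX_j\to Z$ with $c_j\in S^1$ and $X_j$ in a closed set $Y$, pass to $c_{j_k}\to c$, whence $X_{j_k}\to\bar cZ\in Y$) gives $\overline{\Orb_{\Psi_1}(A)}=\bigcup_{c\in S^1}c\cdot\overline{\Orb_{*}(A)}$. Hence $\widetilde A\to A$ for $\Psi_1$ iff $\bar c\widetilde A\to A$ for $*$-congruence for some $c\in S^1$, so the $\Psi_1$-graph is the quotient of the $2\times2$ $*$-congruence closure graph of \cite[Theorem~2.2]{FKS2} by the $S^1$-scaling on vertices: this merges the circle $[0]\oplus[e^{i\varphi}]$ into $1\oplus 0$, merges suitable circles of $*$-congruence orbits into $1\oplus-1$, $\begin{bsmallmatrix}0&1\\1&i\end{bsmallmatrix}$, etc., fixes the orbits on which the scaling is itself a $*$-congruence (such as $\begin{bsmallmatrix}0&1\\0&0\end{bsmallmatrix}$, whose off-diagonal entry can be scaled arbitrarily), and leaves the two one-parameter families $1\oplus e^{i\theta}$, $\begin{bsmallmatrix}0&1\\\tau&0\end{bsmallmatrix}$ one-parameter; the dimensions on the right of \eqref{graph1} are the $*$-congruence orbit dimensions, increased by $1$ exactly when the scaling direction is transverse. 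Each drawn edge is realized by an explicit sequence and Lemma~\ref{pathlema}(\ref{pathlema1}): e.g.\ $\diag(1,j^{-1/2})$ pushes $1\oplus e^{i\theta}$, $I_2$, $1\oplus-1$ to $1\oplus 0$; $\diag(j^{-1/2},j^{-1/2})$ pushes $1\oplus 0$ to $0_2$; $\begin{bsmallmatrix}1&j^{-2}\\1&j^{-1}\end{bsmallmatrix}$ pushes $\begin{bsmallmatrix}0&1\\0&0\end{bsmallmatrix}$ to $1\oplus 0$. For non-edges: Lemma~\ref{pathlema} forces a strict drop of $\dim\Orb$ along any path, killing edges between distinct equal-dimension orbits; lower semicontinuity of $\rank$ along $c_jP_j^{*}AP_j\to\widetilde A$ kills rank-$2$ orbits in the closure of lower ones; the residual comparisons among rank-$2$ orbits use either the $\Psi_1$-invariant ``similarity class of the cosquare $A^{-1}A^{*}$ up to a common rotation of its eigenvalues'' (continuous where $A$ is invertible; the three dimension-$7$ cosquares $1\oplus e^{-2i\theta}$, $\diag(\tau^{-1},\tau)$, $\begin{bsmallmatrix}1&-2i\\0&1\end{bsmallmatrix}$ are pairwise inequivalent and differ from those of $I_2$, $1\oplus-1$) or, for the few not settled this way, the $*$-congruence graph of \cite{FKS2} directly — this is the ``slight adaptation'' of the $*$-congruence case.

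\emph{The estimates.} For (\ref{lemapsi1a}): if $\widetilde A\not\to A$ then $\widetilde A\notin\overline{\Orb_{\Psi_1}(A)}$, so $\mu:=\operatorname{dist}(\widetilde A,\Orb_{\Psi_1}(A))>0$ and $\|E\|=\|cP^{*}AP-\widetilde A\|\ge\mu$. For the forward part of (\ref{lemapsi11}), I would compute $cP^{*}AP$ with $P=\begin{bsmallmatrix}x&y\\u&v\end{bsmallmatrix}$ for each of the finitely many $(\widetilde A,A)$ in the table (e.g.\ in (C\ref{r3}) the diagonal of $P^{*}AP$ is $|x|^2+e^{i\theta}|u|^2,\ |y|^2+e^{i\theta}|v|^2$). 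Each entry of $E=cP^{*}AP-\widetilde A$ equals $c$ times an entry of $P^{*}AP$ minus $c^{-1}$ times the matching entry of $\widetilde A$, and the fourth-column expressions are — up to the unimodular factor, up to real/imaginary parts, and up to $\pm$-combinations of two entries (using $E_{21}=\overline{(\cdots)}$ when $A$ is symmetric) — precisely these quantities, hence of modulus $\le 2\|E\|$; the squared expressions like $y^2,v^2$ satisfy $|y^2|=|y|^2$, and $|y|^2,|v|^2$ are extracted from the relevant entry via $\sin\theta>0$ (or the analogous parameter), at the price of a constant $\nu$. The hypothesis $\|E\|\le|\det\widetilde A|/(8\|\widetilde A\|+4)$ is invoked only in the rows where $\widetilde A,A$ lie in the same orbit and $c$ must be pinned near some $(-1)^k$: by (\ref{detxe}) it yields $|\det(\widetilde A+E)|\ge\tfrac12|\det\widetilde A|>0$, which (with $\|E\|\le1$) confines $\|P\|$ and $|\det P|$ to a bounded range, so $c^{2}|\det P|^{2}\det A=\det(\widetilde A+E)$ stays $O(\|E\|)$-close to $\det\widetilde A$ and is solvable for $c$.

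\emph{The converse for} (C\ref{r3}), (C\ref{r4}), (C\ref{r7}), (C\ref{r10}). Here the listed expressions pin the entries of $P$ to within a bounded amount: the ``small'' entries directly ($|v|\le\sqrt s$ from $|v^2|\le s$) and the possibly large ones by a constant, using the structure of the remaining expressions with $s\le1$ (in (C\ref{r3}), $\sin\theta\,|u|^2\le s$ forces $|u|^2\le s/\sin\theta$, hence $|x|^2\le s+|u|^2$ if $\alpha=0$ and $|x|^2+|u|^2\le s+1$ if $\alpha=1$). As each entry of $E=cP^{*}AP-\widetilde A$ is a bounded-coefficient polynomial in the entries of $P$ and $c$, linear in the small entries, one gets $\|E\|\le\rho\sqrt s$, the square root being exactly the loss $|y|\le\sqrt{|y^2|}$ on those entries. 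The main obstacle I anticipate is the bookkeeping — running the dozen explicit conjugations and, for each, both reading the table off $E$ and (in the four converse rows) re-deriving the $\|E\|$-bound while correctly controlling the a priori unbounded entries of $P$ — together with the few rank-$2$ closure relations that do not come verbatim from \cite{FKS2} and need the cosquare argument above.
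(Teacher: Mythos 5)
Your route is viable and, for the quantitative part (\ref{lemapsi11}), essentially coincides with the paper's: both expand $cP^{*}AP=\widetilde A+E$ entrywise for each pair in the table and pin $c$ through the determinant when $\widetilde A,A$ are nonsingular (the paper packages this as Lemma \ref{lemadet}), and your converse mechanism for (C\ref{r3}), (C\ref{r4}), (C\ref{r7}), (C\ref{r10}) — small entries of $P$ read off directly, large ones bounded, then $\|E\|\le\rho\sqrt s$ — is exactly what the paper does. Where you genuinely diverge is the closure graph and part (\ref{lemapsi1a}): you obtain the graph by quotienting the $*$-congruence graph of \cite{FKS2} by the $S^1$-scaling (plus the cosquare invariant $A^{-*}A$ modulo rotation for the remaining nonsingular non-edges), and you dispose of (\ref{lemapsi1a}) by the soft bound $\mu=\operatorname{dist}\bigl(\widetilde A,\Orb_{\Psi_1}(A)\bigr)>0$. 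This is legitimate for the lemma as stated, and cleaner; what it buys less of is computability: the paper's non-path arguments are explicit inequalities that fail below a stated threshold, and these explicit $\mu$'s (see the remark after the lemma) are what feed the quantitative claims of Theorem \ref{izrek}. Your quotient argument also still requires the case-by-case "merging" bookkeeping (deciding, for each pair of $\Psi_1$-normal forms, whether some unimodular multiple of $\widetilde A$ lies in the $*$-closure of the $A$-orbit), which is where the paper instead does the inequalities once and gets both conclusions.

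One concrete under-specified point: several fourth-column expressions are not, even up to unimodular factors, real/imaginary parts and $\pm$-combinations, entries of $E$, so the blanket claim "hence of modulus $\le 2\|E\|$" does not cover them. The lines $c-(-1)^{k}$ in (C\ref{r6}), (C\ref{r9}) and $c-1$ in (C\ref{r11}) bound $c$ itself; in (C\ref{r11}) both matrices are singular, so your determinant-pinning is unavailable and the bound must come from the $(1,1)$ entry equation $c\,|x|^{2}=\alpha+\epsilon_1$ with $|x|^{2}$ real and bounded away from $0$, via an argument-estimate of the type (\ref{ocenah}). Likewise, the absence of $(-1)^{k}$ in (C\ref{r1}) (the expressions $|x|^{2}-1$, $|v|^{2}-1$, not $|x|^{2}-(-1)^{k}$) and the clause "$k=0$ for $\sigma=1$" in (C\ref{r9}) require a short parity argument showing $k$ must be even once $\|E\|$ is below an explicit threshold (and for larger $\|E\|$ one enlarges $\nu$ after checking the entries of $P$ are a priori bounded). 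These are small, fixable steps, but your proposed mechanism as written does not produce them, so they need to be added before the forward part of (\ref{lemapsi11}) is complete.
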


\begin{remark} 
Constants $\mu$ or $\nu$ in Lemma \ref{lemapsi1} are calculated for any given pair $\widetilde{A},A$ (see Lemma \ref{pathlema} and the proof of Lemma \ref{lemapsi1}). The existence of the constant $\rho$ (computable as well) in the converse in Lemma \ref{lemapsi1} (\ref{lemapsi11}) is showed only in those cases where it turns to be useful in the proof of Theorem \ref{izrek}, though it is expected to be proved (possibly by a slight modification) for most of the cases.
\end{remark}

We are ready to state the main results of the paper. The following theorem de\-scri\-bes the closure graph for the action (\ref{aAB}). Its proof is given in Sec. \ref{proofT}.
It is expected that by adapting the proof a similar result should hold for the re\-stri\-ction of the action (\ref{aAB}) with $c=1$; in this case there are few more types of orbits. 

\begin{theorem}\label{izrek}
Orbits with normal forms from Lemma \ref{lemalist} are vertices in the closure graph for the action (\ref{aAB}). The graph has the following properties:
\begin{enumerate}
\item If $(\widetilde{A},\widetilde{B})\not \to (A,B)$, it is possible to provide some lower bound for the distance from $(\widetilde{A},\widetilde{B})$ to the orbit of $(A,B)$. 
%
\item \label{izrek3} There is a path from $(0_2,0_2)$ to any orbit. There exist paths from $(1\oplus 0,0_2)$ to all orbits, except to $\big(1\oplus 0, 
\begin{bsmallmatrix}
0 & 1\\
1 & 0
\end{bsmallmatrix}\big)$, 
$\big( 
\begin{bsmallmatrix}
0 & 1\\
\tau & 0
\end{bsmallmatrix}, 
\begin{bsmallmatrix}
0 & b\\
b & 0
\end{bsmallmatrix}\big)$ or $b>0$, and to $(0_2,B)$, $B\in \mathbb{C}_S^{2\times 2}$.
\item \label{izrek4} There exist paths from $(1\oplus 0,\widetilde{a}\oplus 0)$ with $\widetilde{a}>0$ to all orbits, except to $\big(1\oplus 0, 
\begin{bsmallmatrix}
0 & 1\\
1 & 0
\end{bsmallmatrix}\big)$, 
$\big(
\begin{bsmallmatrix}
0 & 1\\
1 & i
\end{bsmallmatrix},0\oplus d\big)$ for $d>\widetilde{a}$, $(0_2,B)$ for $B\in \mathbb{C}_S^{2\times 2}$,
$\big( 
\begin{bsmallmatrix}
0 & 1\\
\tau & 0
\end{bsmallmatrix}, 
\begin{bsmallmatrix}
0 & b\\
b & 0
\end{bsmallmatrix}\big)$ for $b>0$ such that $\widetilde{a}\not\in [\frac{2b}{1+\tau},\frac{2b}{1-\tau}]$, and to 
$\big(1\oplus e^{i\theta}, 
\begin{bsmallmatrix}
a & b\\
b & d
\end{bsmallmatrix} \big)$ for $0\leq \theta<\pi$ and such that $\widetilde{a}>M$, where $M$ is the maximum of the function given with a constraint
\begin{equation}\label{izrazf7}
f(r,t,\beta)=|ar^{2}e^{i\beta}+2brs+ds^{2}e^{-i\beta}|, \quad r^{4}+2r^{2}t^{2}\cos \theta + t^{4}=1, \,\, r,t\geq 0,\,\, \beta \in \mathbb{R}.
\end{equation}
\item All nontrivial paths $(\widetilde{A},\widetilde{B})\not \to (A,B)$ with nontrivial $(\widetilde{A},\widetilde{B})\neq (1\oplus 0,\widetilde{a}\oplus 0)$ for $\widetilde{a}\geq 0$ (not mentioned in (\ref{izrek3}), (\ref{izrek4})) are noted in the following two diagrams. Orbits at the same horizontal level have equal dimension (indicated on the right).
\end{enumerate}  
\end{theorem}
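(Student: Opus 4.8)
The plan is to prove Theorem~\ref{izrek} by reducing everything to the two ``projection'' closure graphs (Lemma~\ref{lemapsi1} for $\Psi_1$ and Lemma~\ref{lemapsi2} for $\Psi_2$) together with the determinant obstruction $p=0$ from Lemma~\ref{pathlema}~(\ref{pathlema2})(\ref{pathlemai2})(\ref{NC3}), and then to sharpen the necessary conditions into genuine paths by constructing explicit perturbing sequences. Concretely, I would first fix a target normal form $(A,B)$ and a putative source normal form $(\widetilde A,\widetilde B)$ from the tables of Lemma~\ref{lemalist}. By Lemma~\ref{pathlema} a path $(\widetilde A,\widetilde B)\to(A,B)$ forces simultaneously $\widetilde A\to A$ in the $\Psi_1$-graph, $\widetilde B\to B$ in the $\Psi_2$-graph, $\dim\Orb_\Psi(A,B)>\dim\Orb_\Psi(\widetilde A,\widetilde B)$, and $p=0$. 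These four conditions already eliminate the overwhelming majority of candidate pairs, and in each such case the promised lower bound for the distance in part~(1) comes directly from whichever obstruction fails: if $\widetilde A\not\to A$ use Lemma~\ref{lemapsi1}~(\ref{lemapsi1a}) with its constant $\mu$ (transported to normal forms via Remark~\ref{remarkocena}); if $\widetilde B\not\to B$ use Lemma~\ref{lemapsi2} (the bound $\|F\|\ge 1$); if $p\neq0$ use Lemma~\ref{pathlema}~(\ref{pathlema2})(\ref{pathlema2ii}); and if only the dimension inequality fails, the orbit of $(A,B)$ is a locally closed algebraic set not containing $(\widetilde A,\widetilde B)$, so its complement contains a ball of some radius around $(\widetilde A,\widetilde B)$.

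Next I would organize the surviving candidate pairs by the dimension of $\Orb_\Psi(\widetilde A,\widetilde B)$, working upward, and for each one either exhibit a sequence $(c_jP_j^*AP_j,\;P_j^TBP_j)\to(\widetilde A,\widetilde B)$ establishing the path, or exhibit one more obstruction. The key observation making this feasible is that when $\widetilde A\to A$ via some $(c,P)$, Lemma~\ref{lemapsi1}~(\ref{lemapsi11}) tells us exactly which scalar functions of $(c,P)$ must be made small; one then has to check whether $P$ can be chosen so that \emph{simultaneously} $P^TBP$ is close to $\widetilde B$. This is where the converse statements (the $\rho\sqrt s$ estimates) in lines (C\ref{r3}), (C\ref{r4}), (C\ref{r7}), (C\ref{r10}) of Lemma~\ref{lemapsi1} do the real work: they let us deduce from smallness of the $\Psi_1$-expressions that $\|E\|$ is small, hence that $P$ lies in a controlled family, and then evaluating $P^TBP$ on that family either lands near $\widetilde B$ or is bounded away from it. For the special families in parts~(\ref{izrek3}) and~(\ref{izrek4}) — paths emanating from $(1\oplus0,0_2)$ and from $(1\oplus0,\widetilde a\oplus0)$ — I would treat $\widetilde A=1\oplus0$ separately, parametrize the admissible $P$ (for which $cP^*(1\oplus0)P$ stays near $1\oplus0$, essentially $P=\begin{bsmallmatrix}x&y\\u&v\end{bsmallmatrix}$ with $|x|^2+\sigma|u|^2$ controlled and $y,v$ free up to the relation coming from nonsingularity), and then study $P^TBP$; the scalar optimization problem in \eqref{izrazf7} arises precisely as ``the largest value of $|\widetilde a|$ still reachable'' when $B=\begin{bsmallmatrix}a&b\\b&d\end{bsmallmatrix}$ and $A=1\oplus e^{i\theta}$, via the constraint $r^4+2r^2t^2\cos\theta+t^4=1$ encoding $|\det P|^2$-normalization together with $A\to 1\oplus e^{i\theta}$.

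Finally I would assemble the two diagrams: place each orbit at the height equal to $\dim\Orb_\Psi$, draw an edge for every minimal path found above, and invoke transitivity of $\to$ to suppress the rest. A useful consistency check throughout is the third bulleted property of closure graphs from Section~\ref{Sec3} ($\widetilde V\not\to V$ implies, for every $W$, $\widetilde V\not\to W$ or $W\not\to V$), which cross-validates the non-paths. I expect the main obstacle to be exactly the simultaneity issue in the intermediate-dimension strata: for a given $(\widetilde A,\widetilde B)$ there may be several one-parameter families of $(c,P)$ driving $\widetilde A\to A$, and one must show that \emph{none} of them can be massaged to also send $B$ near $\widetilde B$ (to prove a non-path) — this requires the quantitative converse estimates of Lemma~\ref{lemapsi1} rather than just its qualitative content, and it is the reason those converses were stated only in the cases (C\ref{r3}), (C\ref{r4}), (C\ref{r7}), (C\ref{r10}) that actually occur. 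The bookkeeping — roughly forty source orbits against forty target orbits — is voluminous but, once the reduction to $\Psi_1$, $\Psi_2$ and $p=0$ is in place and the relevant converses are available, essentially mechanical.
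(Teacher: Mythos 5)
Your strategy coincides with the paper's: first impose the necessary conditions $\widetilde{A}\to A$, $\widetilde{B}\to B$ and $p=0$ from Lemma \ref{pathlema}, Lemma \ref{lemapsi2} and Lemma \ref{lemapsi1} (which already furnish the explicit lower bounds whenever one of them fails), and then, for the surviving candidate pairs, use the quantitative constraints on $(c,P)$ supplied by Lemma \ref{lemapsi1} (\ref{lemapsi11}) to analyse $P^TBP$ componentwise, either deriving a contradiction for all small $E,F$ or exhibiting an explicit family $P(s),c(s)$, with the constrained optimization (\ref{izrazf7}) arising exactly where you place it. Two small corrections: the constraint $r^{4}+2r^{2}t^{2}\cos\theta+t^{4}=1$ encodes $\bigl||x|^{2}+e^{i\theta}|u|^{2}\bigr|=|c^{-1}|=1$ from line (C\ref{r3}), not a normalization of $|\det P|^{2}$, and the converse ($\rho\sqrt{s}$) estimates of lines (C\ref{r3}), (C\ref{r4}), (C\ref{r7}), (C\ref{r10}) are used to establish the \emph{existence} of paths without writing explicit $P(s)$ (it suffices to solve the reduced constraint system), whereas the non-paths rely on the forward estimates $|\cdot|\le\nu\|E\|$ together with the componentwise analysis of $P^TBP$.
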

\vspace{-3mm}
\small
\begin{equation*}\label{graph3}
\hspace*{-12mm}
\begin{tikzcd}[column sep=small]
 & 
 \begin{bsmallmatrix}
0 & 1\\
\tau & 0
\end{bsmallmatrix}, 0\oplus 1 &  
\begin{bsmallmatrix}
0 & 1\\
\tau & 0
\end{bsmallmatrix}, 1\oplus \zeta & & & 9\\
\begin{bsmallmatrix}
0 & 1\\
0 & 0
\end{bsmallmatrix}, 
\begin{bsmallmatrix}
1 & b\\
b & 0
\end{bsmallmatrix} & 
\begin{bsmallmatrix}
0 & 1\\
0 & 0
\end{bsmallmatrix}, 
\begin{bsmallmatrix}
\zeta & b\\
b & 1
\end{bsmallmatrix} & 
\begin{bsmallmatrix}
0 & 1\\
0 & 0
\end{bsmallmatrix} ,a\oplus 1  & 
\begin{bsmallmatrix}
0 & 1\\
\tau & 0
\end{bsmallmatrix}, 
\begin{bsmallmatrix}
e^{i\varphi} & b\\
b & \zeta
\end{bsmallmatrix}  & 
\begin{bsmallmatrix}
0 & 1\\
\tau & 0
\end{bsmallmatrix} ,\begin{bsmallmatrix}
0 & b\\
b & e^{i\varphi}
\end{bsmallmatrix} & 9\\
\begin{bsmallmatrix}
0 & 1\\
0 & 0
\end{bsmallmatrix},1\oplus 0
& 
\begin{bsmallmatrix}
0 & 1\\
0 & 0
\end{bsmallmatrix}, 0\oplus 1
& &
  \begin{bsmallmatrix}
0 & 1\\
\tau & 0
\end{bsmallmatrix} ,\begin{bsmallmatrix}
0 & b'\\
b' & 0
\end{bsmallmatrix} \arrow[u,near start,"\scriptscriptstyle{b=b'}"',"\scriptscriptstyle{\zeta=0}"] \arrow[ur,near end,"\scriptscriptstyle{b=b'}"]
&
1\oplus 0 ,\begin{bsmallmatrix}
0 & 1\\
1 & 0
\end{bsmallmatrix}
&   8 \\
& 
\begin{bsmallmatrix}
0 & 1\\
0 & 0
\end{bsmallmatrix} ,\begin{bsmallmatrix}
0 & b'\\
b' & 0
\end{bsmallmatrix}\arrow[uul, swap,pos=0.8,bend left=10,"b=b'"]\arrow[uu,swap,pos=0.85,bend left=50,"\scriptscriptstyle{b=b'}","\scriptscriptstyle{\zeta=0}"']
&
\begin{bsmallmatrix}
0 & 1\\
\tau & 0
\end{bsmallmatrix} ,0_2 \arrow[uur, "\scriptscriptstyle{\zeta=0}"]\arrow[uurr, bend right=20,pos=0.3, "b=b'"]
& & & 7 \\
& \begin{bsmallmatrix}
0 & 1\\
0 & 0
\end{bsmallmatrix} ,0_2 \arrow[uul, ""] & & 0_2,I_2 \arrow[uur,""]
&  & 6 \\
 & 
  & 0_2,1\oplus 0
\arrow[uuuull,bend left=90, ""] \arrow[uuull, bend left=40, ""] \arrow[uuul, bend left=10,""] \arrow[uuuurr,bend right=90, ""]
 \arrow[uuuur, ""]\arrow[uuuurr, bend right=5 ""] \arrow[uuurr, ""]\arrow[uuuur, ""]\arrow[uuuu, bend left=23, ""]\arrow[uuuul, ""]\arrow[uuuuul, bend left=18,""]\arrow[uuuuu, bend right=20, "\scriptscriptstyle{\zeta=0}",pos=0.2]
& & & 4
\end{tikzcd}
\end{equation*}
\normalsize
\vspace{-1cm}
\small
\begin{equation*}\label{graph2}
\begin{tikzcd}[column sep=small]
\begin{bsmallmatrix}
0 & 1\\
1 & i
\end{bsmallmatrix}, a\oplus \zeta & 
\begin{bsmallmatrix}
0 & 1\\
1 & i
\end{bsmallmatrix}, 
\begin{bsmallmatrix}
0 & b\\
b & 0
\end{bsmallmatrix} & 
1\oplus -1 ,a\oplus d  & 
\begin{bsmallmatrix}
0 & 1\\
1 & 0
\end{bsmallmatrix}, 
\begin{bsmallmatrix}
0 & b\\
b & 1
\end{bsmallmatrix}  & \begin{bsmallmatrix}
0 & 1\\
1 & 0
\end{bsmallmatrix} ,a\oplus d e^{i\varphi} & 9\\
\begin{bsmallmatrix}
0 & 1\\
1 & i
\end{bsmallmatrix}, 0\oplus d 
& 1\oplus -1, \begin{bsmallmatrix}
0 & b\\
b & 0
\end{bsmallmatrix}
& 1\oplus -1,d_0\oplus d \arrow[u, swap,"b=d=\zeta"]
                         \arrow[ur, swap,"b=d=d_0=1"]
& 
 &
  \begin{bsmallmatrix}
0 & 1\\
1 & 0
\end{bsmallmatrix} ,1\oplus 0 & 8 \\
\begin{bsmallmatrix}
0 & 1\\
1 & i
\end{bsmallmatrix}, 0_2 \arrow[u, ""] & 
& 
&
& & 7 \\
1\oplus -1,0_2 \arrow[u, ""] & 
& &   &  & 5\\
 & 
  & 0_2,1\oplus 0
\arrow[uuuu,bend left, ""] \arrow[uuuull, ""] \arrow[uuuul, ""] \arrow[uuu, ""] \arrow[uuul, ""] \arrow[uuuur,bend right= 15, ""] \arrow[uuuurr, ""] \arrow[uuurr, ""] & & & 4
\end{tikzcd}
\end{equation*}
\normalsize


\begin{remark}
The non-existence of some paths in the closure graph for the action (\ref{aAB}) follow immediately from Lemma \ref{lemapsi2}, Lemma \ref{lemapsi1} and Lemma \ref{pathlema} (\ref{pathlemai2}) (\ref{pathlema2}) (\ref{NC3}), (\ref{NC4}). However, to get some lower bound on the distance from a certain normal form to another orbit, many intrigueing and sometimes tedious estimates need to be done (see the proof of Theorem \ref{izrek} in Sec. \ref{proofT}).
It makes the proof of the theorem much longer and more involved. Moreover, one must also work with inequalities instead of inspecting the convergence of sequences (see Lemma \ref{pathlema}).

It would be interesting to have the closure graph for bundles of matrices with respect to the action (\ref{aAB}) (see (\ref{actionL})) or its restriction. One would need to consider the same equations as in our case, but possibly with less constraints. 
\end{remark}

%
%
%
%
%

Let $M$ be a compact codimension $2$ submanifold in a complex manifold $X$. Since the complex dimension of the complex tangent spaces near a regular point of $M$ is preserved under small perturbations, complex points of a small deformation $M'$ of $M$ can arise only near complex points of $M$.
Recall that $(\widetilde{A},\widetilde{B})\in \mathbb{C}^{2 \times 2}\times \mathbb{C}_S^{2 \times 2}$ (a normal form up to quadratic terms) can be associated to a complex point $p\in M$, so that in a neighborhood of $p$ the submanifold $M$ is of the form (\ref{BasForm1}) for $A=\widetilde{A}$, $B=\widetilde{B}$. If $M'$ is a $\mathcal{C}^2$-small deformation of $M$, then it is seen near $p$ as a graph:
\begin{equation*}
w=w_0+s^T\overline{z}+r^Tz+\overline{z}^T\widehat{A}z+\tfrac{1}{2}\overline{z}^T\widehat{B}\overline{z}+\tfrac{1}{2}z^T\widetilde{C}z+o(|z|^2), \quad (w(p),z(p))=(0,0),
\end{equation*}
where $(z,w)$ are local coordinates with $w_0\in \mathbb{C}$, $r,s\in \mathbb{C}^2$, $\widehat{C}\in \mathbb{C}^{2\times 2}$ small, and $\widehat{A}\in \mathbb{C}^{2 \times 2}$, $\widehat{B}\in \mathbb{C}_S^{2 \times 2}$ close to $\widetilde{A}$, $\widetilde{B}$, respectively. Similarily as in the exposition in Sec. \ref{intro} a complex point on this graph are put into the standard position (\ref{BasForm1}) for $A=\widehat{A}$, $B=\widehat{B}$. (Translate the complex point to $(0,0)$, use a complex-linear transformation close to identity to insure the tangent space at $(0,0)$ to be $\{w=0\}$, and finally eliminate $z$-terms.) The next result is hence a direct consequence of Theorem \ref{izrek}.

\begin{posledica}\label{pospert}
Let $M$ be a compact real $4$-manifold embedded $\mathcal{C}^2$-smoothly in a complex $3$-manifold $X$ and let $p_1,\ldots,p_k\in M$ be its isolated complex points with the corresponding normal forms up to quadratic terms $(A_1,B_1),\ldots,(A_k,B_k)\in \mathbb{C}^{n\times n}\times \mathbb{C}_S^{n\times n}$. Asumme further that $M'$ is a deformation of $M$ obtained by a smooth isotopy of $M$, and let $p\in M'$ be a complex point with the corresponding normal form $(A,B)$. If the isotopy is sufficiently $\mathcal{C}^2$-small then $p$ is arbitrarily close to some $p_{j_0}$, $j_{0}\in \{1,\ldots,k\}$, and there is a path $(A_{j_{0}},B_{j_{0}})\to(A,B)$ in the closure graph for the action (\ref{aAB}).
\end{posledica}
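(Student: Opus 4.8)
The plan is to combine the geometric preparation sketched just above the statement with the quantitative content of Theorem \ref{izrek}, so that essentially nothing new has to be proved.

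First I would dispose of the localization. CR-regularity is an open condition: at a CR-regular point the complex tangent space has the smallest possible dimension $n-1$, and this property persists under $\mathcal C^{1}$-small (hence $\mathcal C^{2}$-small) perturbations of the embedding. Since $M$ is compact and its complex points $p_{1},\dots,p_{k}$ are isolated, a compactness argument then yields, for each $\varepsilon>0$, a threshold $\delta>0$ such that any $\delta$-$\mathcal C^{2}$-small isotopy produces an $M'$ all of whose complex points lie in $\bigcup_{j}B(p_{j},\varepsilon)$. This is exactly the assertion that the complex point $p$ of $M'$ is arbitrarily close to some $p_{j_{0}}$, and it fixes the index $j_{0}$.

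Next I would reduce to matrix data. In coordinates near $p_{j_{0}}$ in which $M$ is the graph (\ref{BasForm1}) with $(A,B)=(A_{j_{0}},B_{j_{0}})$, a $\mathcal C^{2}$-small isotopy presents $M'$ near $p_{j_{0}}$ as the graph displayed above the statement, with $w_{0},r,s,\widehat C$ of size tending to $0$ and $(\widehat A,\widehat B)$ tending to $(A_{j_{0}},B_{j_{0}})$ as the isotopy shrinks. Running the normalization recalled in Section \ref{intro} on $p$ — translate $p$ to the origin, apply a complex-linear change close to the identity to restore the tangent space to $\{w=0\}$, and eliminate the $z$-terms by a biholomorphism that does not touch the quadratic part — one gets that the normal form $(A,B)$ attached to $p$ (a member of the list in Lemma \ref{lemalist}) lies in $\Orb_{\Psi}(\widehat A,\widehat B)$. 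Since the only change here that is not close to the identity acts through $\Psi$, this says that $\Orb_{\Psi}(A,B)$ meets the ball of some radius $\eta$ about $(A_{j_{0}},B_{j_{0}})$, with $\eta\to 0$ as the isotopy shrinks.

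Finally I would extract the path. By the observation preceding Lemma \ref{pathlema}, the existence of an arbitrarily small perturbation of $(A_{j_{0}},B_{j_{0}})$ inside $\Orb_{\Psi}(A,B)$ is equivalent to $(A_{j_{0}},B_{j_{0}})\in\overline{\Orb_{\Psi}(A,B)}$, i.e. to a path $(A_{j_{0}},B_{j_{0}})\to(A,B)$. The point I expect to be the real crux is turning this into the stated ``sufficiently small'' conclusion, because ``a path exists'' is a discrete property while a single isotopy only furnishes one perturbation of $(A_{j_{0}},B_{j_{0}})$ in $\Orb_{\Psi}(A,B)$; this is precisely where Theorem \ref{izrek}(1) is used. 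If $(A_{j_{0}},B_{j_{0}})\not\to(A,B)$, that theorem provides a positive lower bound $\mu$ for the distance from $(A_{j_{0}},B_{j_{0}})$ to $\Orb_{\Psi}(A,B)$; choosing the isotopy small enough that $\eta<\mu$ then contradicts the previous paragraph, so a path must exist. Everything else — the stability of CR-regularity, the control of $(E,F)=(\widehat A-A_{j_{0}},\widehat B-B_{j_{0}})$ by the $\mathcal C^{2}$-size of the isotopy, and the fact that putting $p$ into standard position only moves the quadratic data within its $\Psi$-orbit — is routine and already indicated in the text, so I would keep that part short.
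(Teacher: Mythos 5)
Your proposal is correct and follows essentially the same route as the paper, which treats the corollary as a direct consequence of the localization/graph discussion preceding the statement together with Theorem \ref{izrek}: complex points of $M'$ only appear near the $p_j$, the quadratic data $(\widehat A,\widehat B)$ of $M'$ at $p$ is an arbitrarily small perturbation of $(A_{j_0},B_{j_0})$ lying in $\Orb_{\Psi}(A,B)$, and the lower distance bounds of Theorem \ref{izrek}(1) rule out $(A_{j_0},B_{j_0})\not\to(A,B)$ for sufficiently small isotopies. Your only addition is to spell out this last contrapositive step, which the paper leaves implicit (and records in the remark following the corollary).
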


\begin{remark}
In the proof of Theorem \ref{izrek} the lower estimates for the distances from normal forms to other orbits are provided, therefore it can be told how small the isotopy $M'$ (of $M$) in the assumption of Corollary \ref{pospert} needs to be.
\end{remark}

\vspace{-3mm}
\section{Proof of Lemma \ref{lemapsi1}}\label{proofL}

In this section we prove Lemma \ref{lemapsi1}. We start with the  following technical lemma related to actions $(\ref{actionpsi1})$ and $(\ref{actionpsi2})$.

\begin{lemma}\label{lemadet}
\begin{enumerate}
\item \label{lemadeta} Suppose $\widetilde{A},A,P\in GL_2(\mathbb{C})$, $E\in \mathbb{C}^{2\times 2}$, $c\in S^{1}$ and such that $cP^*AP=\widetilde{A}+E$,
$\|E\|\leq 
\tfrac{|\det \widetilde{A}|}{8\|\widetilde{A}\|+4}$. Denote further $\Delta=\arg \bigl(\tfrac{\det \widetilde{A}}{\det A}\bigr)$. It then follows that
%
\begin{align}\label{cE}
&c= (-1)^{k}e^{ \frac{i\Delta}{2}}+g, \quad c^{-1}= (-1)^{k}e^{-\frac{i\Delta}{2}}+\overline{g}, \qquad k\in \mathbb{Z},\,\,
|g|\leq \tfrac{\|E\|(8\|\widetilde{A}\|+4)}{|\det \widetilde{A}|},\\
\label{PAE}
&|\det P|=\bigl|\tfrac{\det \widetilde{A}}{\det A}\bigr|^{\frac{1}{2}}+r, \qquad |r|\leq \tfrac{\|E\|(4\|\widetilde{A}\|+2)}{\sqrt{|\det \widetilde{A}\det A}|}.
\end{align}
\item \label{lemadetb} If $F\in \mathbb{C}^{2\times 2}$, $\widetilde{B},B,P\in GL_2(\mathbb{C})$ and such that $P^TAP=\widetilde{B}+F$,
$\|F\|\leq 
\tfrac{|\det \widetilde{B}|}{8\|B\|+4}
$, then
\begin{align}\label{PBF}
\det P=\sqrt{\tfrac{\det \widetilde{B}}{\det B}}+r, \qquad |r|\leq \tfrac{\|E\|(4\|\widetilde{B}\|+2)}{\sqrt{|\det \widetilde{B}\det B}|}.
\end{align}
\end{enumerate}
\end{lemma}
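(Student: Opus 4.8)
The plan is to apply the determinant to the congruence relations and then carefully control the resulting square-root extractions. For part \eqref{lemadeta}, taking determinants in $cP^*AP=\widetilde A+E$ gives $c^2\lvert\det P\rvert^2\det A=\det(\widetilde A+E)$. First I would estimate $\lvert\det(\widetilde A+E)-\det\widetilde A\rvert$ via the elementary $2\times2$ bound $\lvert\det(X+D)-\det X\rvert\le \lVert D\rVert(4\lVert X\rVert+2\lVert D\rVert)$ (the same inequality \eqref{detxe} used in the proof of Lemma \ref{pathlema}), which under the hypothesis $\lVert E\rVert\le \tfrac{\lvert\det\widetilde A\rvert}{8\lVert\widetilde A\rVert+4}\le 1$ yields $\lvert\det(\widetilde A+E)-\det\widetilde A\rvert\le \lVert E\rVert(4\lVert\widetilde A\rVert+2)\le\tfrac12\lvert\det\widetilde A\rvert$. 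In particular $\det(\widetilde A+E)\ne0$ and it lies in a disk of radius $\le\tfrac12\lvert\det\widetilde A\rvert$ about $\det\widetilde A$. Writing $c^2\lvert\det P\rvert^2=\dfrac{\det(\widetilde A+E)}{\det A}$ and taking absolute values gives $\lvert\det P\rvert^2=\dfrac{\lvert\det(\widetilde A+E)\rvert}{\lvert\det A\rvert}$; then I extract the positive square root, using $\bigl\lvert\sqrt a-\sqrt b\bigr\rvert\le\dfrac{\lvert a-b\rvert}{\sqrt a+\sqrt b}\le\dfrac{\lvert a-b\rvert}{\sqrt b}$ with $a=\lvert\det(\widetilde A+E)\rvert/\lvert\det A\rvert$, $b=\lvert\det\widetilde A\rvert/\lvert\det A\rvert$, to obtain \eqref{PAE} with the stated bound on $\lvert r\rvert$.

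For the argument, dividing $c^2\lvert\det P\rvert^2\det A=\det(\widetilde A+E)$ by $\det\widetilde A$ and recalling $\Delta=\arg(\det\widetilde A/\det A)$, one finds $c^2=e^{i\Delta}\cdot\dfrac{\det(\widetilde A+E)}{\det\widetilde A}\cdot\dfrac{1}{\lvert\det P\rvert^2}$. Since $\dfrac{\det(\widetilde A+E)}{\det\widetilde A}=1+\delta$ with $\lvert\delta\rvert\le\tfrac12$ and since $\lvert\det P\rvert^2=\lvert\det(\widetilde A+E)\rvert/\lvert\det A\rvert$, the product $\dfrac{\det(\widetilde A+E)}{\det\widetilde A}\cdot\dfrac{1}{\lvert\det P\rvert^2}$ has modulus $1$ and differs from $1$ by a quantity controlled by $\lvert\delta\rvert$, i.e. $c^2=e^{i\Delta}(1+h)$ with $\lvert h\rvert$ of order $\lVert E\rVert(8\lVert\widetilde A\rVert+4)/\lvert\det\widetilde A\rvert$. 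Taking a square root of a number close to $e^{i\Delta}$ on the unit circle gives $c=(-1)^ke^{i\Delta/2}+g$ for some integer $k$, with $\lvert g\rvert$ bounded by (a small constant times) $\lvert h\rvert$; here I must be a little careful that since $\lvert c\rvert=1$ exactly, the perturbation $g$ is tangential and the bound $\lvert g\rvert\le\tfrac{\lVert E\rVert(8\lVert\widetilde A\rVert+4)}{\lvert\det\widetilde A\rvert}$ comes out cleanly from the arc-length estimate on $S^1$. The formula for $c^{-1}=\overline c$ follows by conjugation.

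Part \eqref{lemadetb} is the same computation without the unimodular factor and without taking absolute values: from $P^TAP=\widetilde B+F$ one gets $(\det P)^2=\dfrac{\det(\widetilde B+F)}{\det B}$, and the same $2\times2$ determinant perturbation bound (now applied to $\widetilde B$, using $\lVert F\rVert\le\tfrac{\lvert\det\widetilde B\rvert}{8\lVert B\rVert+4}$) together with $\bigl\lvert\sqrt a-\sqrt b\bigr\rvert\le\dfrac{\lvert a-b\rvert}{\lvert\sqrt a+\sqrt b\rvert}$ for complex square roots (choosing the branch of $\sqrt{\det(\widetilde B+F)/\det B}$ nearest to $\sqrt{\det\widetilde B/\det B}$) yields \eqref{PBF}.

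The main obstacle I anticipate is purely bookkeeping in part \eqref{lemadeta}: one has two entangled square roots (the one for $\lvert\det P\rvert$ and the one for $c$), and one has to split the perturbation of $c^2$ from $e^{i\Delta}$ into its modulus part and its phase part, route the modulus part through the $\lvert\det P\rvert^2$ normalization, and then verify that the remaining phase perturbation, when square-rooted on $S^1$, produces exactly the constant $\tfrac{8\lVert\widetilde A\rVert+4}{\lvert\det\widetilde A\rvert}$ claimed (rather than something with an extra numerical factor). This requires using the hypothesis $\lVert E\rVert\le\tfrac{\lvert\det\widetilde A\rvert}{8\lVert\widetilde A\rVert+4}$ to keep everything in the regime where $\lvert 1+x\rvert^{1/2}$ and $\arg(1+x)$ are comfortably Lipschitz near $x=0$; I would keep the constant $\tfrac12$ margin above precisely so these linearizations are valid with clean constants.
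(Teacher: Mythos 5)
Your proposal is correct and follows essentially the same route as the paper: take determinants, control $\det(\widetilde A+E)/\det\widetilde A$ with the elementary $2\times2$ perturbation bound, split modulus from phase, and extract the two square roots with Lipschitz control (the paper packages the phase step as the estimate $|\sin\psi|\le 2|h/\zeta|$ together with $|e^{i\psi/2}-1|=2|\sin(\psi/4)|\le|\sin\psi|$, which is exactly your arc-length argument, and packages the modulus step as $\sqrt{1+s}=(-1)^l(1+s')$ with $|s'|\le|s|$ rather than your $|\sqrt a-\sqrt b|\le|a-b|/\sqrt b$, which yield the same constant). The only slip is notational: in your formula for $c^2$ the factor $\bigl|\det\widetilde A/\det A\bigr|$ is missing, but it cancels against $|\det P|^2$ exactly as your subsequent sentence assumes, so the argument is unaffected.
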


\begin{proof}
First, we observe the following simple fact. For $\xi,\zeta,h\in \mathbb{C}$:
\begin{equation}\label{ocenah}
\xi=\zeta+h, \,|h|\leq \tfrac{|\zeta|}{2}\neq 0 \quad\textrm{implies}\quad \arg (\xi)-\arg(\zeta)=\psi \in (-\tfrac{\pi}{2},\tfrac{\pi}{2}),\,\,|\sin \psi|\leq 2|\tfrac{h}{\zeta}|.
\end{equation}
Indeed, we have $\xi\zeta^{-1}=1+\frac{h}{\zeta}=|1+\frac{h}{\zeta}|e^{i \psi}$ with $|\frac{h}{\zeta}|\leq \frac{1}{2}$, hence $\psi \in (-\tfrac{\pi}{2},\tfrac{\pi}{2})$ and 
$|\sin \psi|=\bigl|\Ima \big(\tfrac{1+\frac{h}{\zeta}}{|1+\frac{h}{\zeta}|}\big)\bigr|
\leq 
\tfrac{|\Ima \frac{h}{\zeta}|}{|1+\frac{h}{\zeta}|} 
\leq \tfrac{|\frac{h}{\zeta}|}{1-|\frac{h}{\zeta}|}\leq \tfrac{2|h|}{|\zeta|}$.

Next, the right-hand side of (\ref{detxe}) leads to
\begin{equation}\label{XEf}
|\tfrac{\det (X+D)}{\det (X)}-1|                                                \leq \tfrac{\|D\|(4\|X\|+2)}{|\det X|}, \qquad X\in GL_2(\mathbb{C}),\,\,D\in \mathbb{C}^{2\times 2}.
\end{equation} 
By assuming $\|D\|\leq \tfrac{|\det X|}{8\|X\|+4}$ and applying (\ref{ocenah}) to (\ref{XEf}) we obtain 
\begin{equation}\label{detex2}
\psi=\arg \big(\tfrac{\det (X+D)}{\det X}\big)\in (-\tfrac{\pi}{2},\tfrac{\pi}{2}),\qquad|\sin \psi |
\leq \tfrac{\|D\|(8\|X\|+4)}{|\det X|}. 
\end{equation}

We apply $\det$ to $cP^{*}AP=\widetilde{A}+E$, $Q^{T}BQ=\widetilde{B}+F$ and after simplifying we get
\begin{equation}\label{cP}
c^2|\det P|^2=\tfrac{\det (\widetilde{A}+E)}{\det A}=\tfrac{\det (\widetilde{A}+E)}{\det \widetilde{A}}\tfrac{\det \widetilde{A}}{\det A},\qquad  (\det Q)^2=\tfrac{\det (\widetilde{B}+F)}{\det B}.
\end{equation}
From (\ref{detex2}) for $X=\widetilde{A}$, $D=E$ and (\ref{cP}) it follows that
$c=(-1)^{k}e^{i(\frac{\Delta}{2}+\frac{\psi}{2})}$, $k\in \mathbb{Z}$, $\Delta=\arg \bigl(\frac{\det \widetilde{A}}{\det A}\bigr)$ with $\psi$ as in (\ref{detex2}) with $X=\widetilde{A}$, $D=E$.
Using well known facts $e^{i\frac{\psi}{2}}=1+2i\sin (\frac{\psi}{4})e^{i\frac{\psi}{4}}$ and 
$2|\sin \frac{\psi}{4}|\leq |\frac{\psi}{2}|\leq  |\sin \psi | 
$ for $\psi\in (-\tfrac{\pi}{2},\tfrac{\pi}{2})$, we deduce (\ref{cE}). 
Furthermore, (\ref{XEf}) for $X=\widetilde{A}$, $D=E$ and (\ref{cP}) for $X=\widetilde{B}$, $D=F$ give
\small
\begin{align}\label{detPQ}
|\det P|^{2}=\tfrac{|\det \widetilde{A}|}{|\det A|}|1+p|, \,\, |p|\leq \tfrac{\|E\|(4\|\widetilde{A}\|+2)}{|\det \widetilde{A}|},\quad
\small{(\det Q)^{2}=\tfrac{\det \widetilde{B}}{\det B}(1+q), \,\, |q|\leq \tfrac{\|E\|(4\|\widetilde{B}\|+2)}{|\det \widetilde{B}|}},
\end{align}
\normalsize
respectively. To conclude the proof we observe another simple fact. If $|s|\leq 1$ then 
\begin{equation}\label{ocenakoren}
\sqrt{1+s}=(-1)^{l}(1+s'), \qquad l\in \mathbb{Z}, \,\,\Rea (s')\geq -1  , \,\,
|s'|\leq |s|.
\end{equation}
To see this, we take $\rho=1+s'$ to be the square root of $1+s$ (thus $\rho^{2}=1+s$) with $\Rea (\rho)\geq 0$ and $\Rea (s')\geq -1$. It yields $|s'|=|\rho-1|=|\frac{\rho^{2}-1}{\rho+1}|\leq \frac{|s|}{1}$.
For $\|E\|\leq \tfrac{|\det \widetilde{A}|}{8\|\widetilde{A}\|+4}$ and
$\|F\|\leq \tfrac{|\det \widetilde{B}|}{8\|\widetilde{B}\|+4}$ ($|p|,|q|\leq \frac{1}{2}\leq 1$ in (\ref{detPQ})), 
we apply (\ref{ocenakoren}) to (\ref{detPQ}) for $s=p$ and $s=q$, respectively. It implies (\ref{PAE}) and (\ref{lemadetb}).
\end{proof}

\begin{proof}[Proof of Lemma \ref{lemapsi1}.]
For actions $\Psi$, $\Psi_1$ (see (\ref{aAB}) and (\ref{actionpsi1})), it follows that $(A',B')\in \Orb_{\Psi}(A,0)$ if and only if $B'=0$ and $A'\in \Orb_{\Psi_1}(A)$. Hence $\dim \bigl(\Orb_{\Psi_1}(A)\bigr)=\dim \bigl(\Orb_{\Psi}(A,0)\bigr)$, where dimensions of orbits of $\Psi_1$ are obtained from Lemma \ref{lemalist}.

To prove $\widetilde{A}\to A$ it is sufficient to find $c(s)\in S^{1}$,
$P(s)\in GL_2(\mathbb{C})$ such that 
\begin{equation}\label{cPepsipsi1}
c(s)(P(s))^*AP(s)-\widetilde{A}\to 0 \textrm{   as   } s \to 0.
\end{equation}
It is straightforward to see that 
$
P(s)=1\oplus s
$ 
with $c(s)=1$ in (\ref{cPepsipsi1}) yields $
1\oplus 0
\to
1\oplus \lambda
$, while
to prove  
$
1\oplus 0
\to
\begin{bsmallmatrix}
0 & 1 \\
\tau & 0
\end{bsmallmatrix}
$,
$0\leq \tau\leq 1$, and 
$
1\oplus -1
\to
\begin{bsmallmatrix}
0 & 1 \\
1 & i
\end{bsmallmatrix}
$,
we take 
$
P(s)=\frac{1}{\sqrt{1+\tau}}\begin{bsmallmatrix}
1 & 0 \\
1 & s
\end{bsmallmatrix}
$ 
and
$
P(s)=\frac{1}{2}\begin{bsmallmatrix}
\frac{1}{s} & \frac{1}{s} \\
s & -s
\end{bsmallmatrix}
$, respectively, and in both cases again with $c(s)=1$. 
(Compositions of these paths represent paths as well.) 
It is then left to find necessary (sufficient) conditions for the existence of these paths, i.e. given $A$, $\widetilde{A}$, $E$ satisfying
\begin{equation}\label{eqcPAE}
cP^*AP=\widetilde{A}+E, \qquad c\in S^{1}, P\in GL_2(\mathbb{C}),
\end{equation}
we must find out how $c$, $P$ depend on $E$ (how $E$ depends on $c$, $P$).

On the other hand, if (\ref{eqcPAE}) fails for every sufficiently small $E$, it gives $\widetilde{A}\not \to A$. To prove (\ref{lemapsi1a}), 
upper estimates for $\|E\|$ will be provided in such cases. 
This has allready been done for $\widetilde{A}\neq 0$, $A=0$ and $\det \widetilde{A}\neq 0$, $\det A=0$ (see Lemma \ref{pathlema} (\ref{pathlema1}) (\ref{pathlema1b})).

Throughout the rest of the proof we denote  
\begin{equation}\label{wAE}
\widetilde{A}=
\begin{bmatrix}
\alpha & \beta\\
\gamma & \omega
\end{bmatrix}
,\qquad
E=
\begin{bmatrix}
\epsilon_1 & \epsilon_2\\
\epsilon_3 & \epsilon_4
\end{bmatrix}
,\qquad
P=
\begin{bmatrix}
x & y\\
u & v
\end{bmatrix},
\end{equation}
and splitt our consideration of the remaining paths $\widetilde{A}\to A$
into several cases.

\begin{enumerate}[label={\bf Case \Roman*.},ref={Case \Roman*.},   ,wide=0pt,itemsep=10pt]
\item \label{p1i2}
$
A=\begin{bsmallmatrix}
0 & 1\\
1 & i
\end{bsmallmatrix}
$\\
It is straightforward to compute
\begin{align*}
cP^*AP=
c
\begin{bmatrix}
\overline{x} & \overline{u}\\
\overline{y} & \overline{v}
\end{bmatrix}
\begin{bmatrix}
0 & 1\\
1 & i
\end{bmatrix}
\begin{bmatrix}
x & y\\
u & v
\end{bmatrix}
&=
c
\begin{bmatrix}
2\Rea(\overline{x}u)+i|u|^2 & \overline{x}v+\overline{u}y+i\overline{u}v\\
\overline{ (\overline{x}v+\overline{u}y)}+i\overline{\overline{u}v} & 2\Rea(\overline{y}v)+i|v|^2
\end{bmatrix}.
\end{align*}
Multipling the equation (\ref{eqcPAE}) by $c^{-1}$ and then writing them componentwise yields 
\begin{align}\label{eq01i}
&2\Rea(\overline{x}u)+i|u|^2=c^{-1}(\alpha+\epsilon_1), \quad \overline{x}v+\overline{u}y+i\overline{u}v=c^{-1}(\beta+\epsilon_2),\\
&\overline{ (\overline{x}v+\overline{u}y)}+i\overline{\overline{u}v}=c^{-1}(\gamma+\epsilon_3),\quad 2\Rea(\overline{y}v)+i|v|^2=c^{-1}(\omega+\epsilon_4).\nonumber
\end{align}
The real and the imaginary parts of the first and the last equation of (\ref{eq01i}) give:
\begin{align}\label{eq01i2}
&2\Rea(\overline{x}u)=\Rea (c^{-1}\alpha)+\Rea(c^{-1}\epsilon_1), \quad |u|^2=\Ima (c^{-1}\alpha)+\Ima (c^{-1}\epsilon_1),\\
&2\Rea(\overline{y}v)=\Rea(c^{-1}\omega)+\Rea(c^{-1}\epsilon_4),\quad |v|^2=\Ima(c^{-1}\omega)+\Ima (c^{-1}\epsilon_4),\nonumber
\end{align}
while by adding (subtracting) the second and the complex-conjugated third equation of (\ref{eq01i}) for $\beta,\gamma\in \mathbb{R}$ we deduce
\begin{align}\label{011iuv}
&2i\overline{u}v=(c^{-1}\beta-\overline{c}^{-1}\gamma)+(c^{-1}\epsilon_2-\overline{c}^{-1}\epsilon_3), \\
&2(\overline{x}v+\overline{u}y)= (c^{-1}\beta+\overline{c}^{-1}\gamma)+(c^{-1}\epsilon_2+\overline{c}^{-1}\epsilon_3).\nonumber
\end{align}

\begin{enumerate}[label={(\alph*)},wide=0pt,itemindent=2em,itemsep=6pt]

\item
$\widetilde{A}=\begin{bsmallmatrix}
0 & 1\\
\tau & 0
\end{bsmallmatrix}$, $0\leq \tau <1$\\
For $\alpha=\omega=0$ the second and the last equation of (\ref{eq01i2}) give $|u|^{2}\leq \|E\|$ and $|v|^{2}\leq \|E\|$ (hence $|uv|\leq \|E\|$). Further, the first equation of (\ref{011iuv}) for $\beta=1$, $\gamma=\tau$ yields
\[
2|\overline{u}v|\geq |c^{-1}|-|\overline{c}^{-1}\tau|-|c^{-1}\epsilon_2|-|\overline{c}^{-1}\epsilon_3|\geq 1-\tau-2\|E\|.
\]
If $\|E\| < \frac{1-\tau}{2}$ we get a contradiction (remember $1>\tau\geq 0$).

\item $\widetilde{A}=\alpha\oplus 0$,\quad $\alpha\in \{0,1\}$\\
The first and the last equation of (\ref{eq01i}) for $\omega=0$ imply 
\[
\bigl|2\Rea(\overline{x}u)+i|u|^{2}-c^{-1}\alpha\bigl|\leq \|E\|,\qquad |v|^{2}\le \|E\|,\quad 2\bigl|\Rea(\overline{y}v)\bigr|\leq \|E\|,
\]
respectively. Next, from (\ref{011iuv}) for $\beta=\gamma=0$ we obtain
\begin{equation}\label{est01iuv}
|\overline{x}v+\overline{u}y|\leq \|E\|, \qquad |\overline{u}v|\leq \|E\|.
\end{equation}
Thus the first part of (\ref{lemapsi11}) for (C\ref{r10}) follows; the converse is immediate by (\ref{eq01i}).

\item $\det\widetilde{A}=e^{i\vartheta}$,\,\, $0\leq \vartheta\leq \pi$, \quad $\|\widetilde{A}\|=1$\\
By (\ref{cE}) in Lemma \ref{lemadet} we have $c^{-1}=i(-1)^k e^{-i\frac{\vartheta}{2}}+\overline{g}$ with $k\in\mathbb{Z}$, $|g|\leq 12\|E\|$, provided that $\|E\|\leq \frac{1}{12}$. Using this and rearranging the terms in  (\ref{eq01i2}), (\ref{011iuv}) we deduce:
\small
\begin{align}\label{eq01iAA}
&\overline{x}v+\overline{u}y-\tfrac{i(-1)^k}{2}( e^{-i\frac{\vartheta}{2}}\beta- e^{i\frac{\vartheta}{2}}\gamma)=\tfrac{1}{2}(\overline{g}\beta+g\gamma+c^{-1}\epsilon_2+\overline{c}^{-1}\epsilon_3)\\
2\Rea(\overline{x}u)-  &  (-1)^k\Rea (ie^{-i\frac{\vartheta}{2}}\alpha)=\Rea (c^{-1}\epsilon_1+\alpha \overline{g}), \quad 
|u|^2=(-1)^k\Ima (ie^{-i\frac{\vartheta}{2}}\alpha)+\Ima (c^{-1}\epsilon_1+\alpha \overline{g}),\nonumber\\
2\Rea(\overline{y}v)-  & (-1)^k\Rea (ie^{-i\frac{\vartheta}{2}}\omega)=\Rea (c^{-1}\epsilon_4+\omega \overline{g}),\quad 
|v|^2=(-1)^k\Ima (ie^{-i\frac{\vartheta}{2}}\omega)+\Ima (c^{-1}\epsilon_4+\omega \overline{g})\nonumber.
\end{align}
\normalsize
Observe that for $\beta=\gamma$ the first equation in (\ref{eq01iAA}) yields
\begin{equation}\label{uvE1}
\bigl|(\overline{x}v+\overline{u}y)-(-1)^{k}\beta\sin (\tfrac{\vartheta}{2})\bigr|\leq 12|\beta|\,|E\|+\|E\|,
\end{equation}
while for $\omega=0$ the last equation of (\ref{eq01i}) implies that $2\big|\Rea (\overline{y}u)\big|\leq \|E \|$, $|v|^{2}\leq \|E\|$.

\begin{enumerate}[label={(\roman*)},wide=0pt,itemindent=4em,itemsep=3pt]

\item $\widetilde{A}=1\oplus e^{i\vartheta}$,\,\,  $0\leq \vartheta< \pi$ \\
After multiplying the third and the fifth equation of (\ref{eq01iAA}) for $\alpha=1$, $\omega=e^{i\widetilde{\theta}}$, $\beta=\gamma=0$, estimating the imaginary parts by its moduli, applying the triangle inequality and using $|g|\leq 12\|E\|$, we obtain
\begin{align*}
|uv|^2\geq &\bigl|\cos (\tfrac{-\vartheta}{2})\cos (\tfrac{\vartheta}{2})\bigr|-|c^{-1}\epsilon_4+\omega \overline{g}|-|c^{-1}\epsilon_1+\alpha \overline{g}|-\bigl|(c^{-1}\epsilon_1+\alpha \overline{g})(c^{-1}\epsilon_4+\omega \overline{g})\bigr|\\
\geq & \tfrac{1}{2}|1+\cos \vartheta|-26\|E\|-169\|E\|^2.
\end{align*}
Combining it with (\ref{est01iuv}) and $\|E\|\geq \|E\|^{2}$ leads to a contradiction for $\|E\|<\tfrac{|1+\cos \vartheta|}{392}$.

\item
$\widetilde{A}$ is either equal to $1\oplus -1$, $\begin{bsmallmatrix}0 & 1\\ 1 &0    \end{bsmallmatrix}$ or $\begin{bsmallmatrix}0 & 1\\ 1 &i    \end{bsmallmatrix}$ \quad ($\vartheta=\pi$)\\
The statement (\ref{lemapsi11}) for (C\ref{r5}) follows immediately from (\ref{eq01iAA}), (\ref{uvE1}) for $\vartheta=\pi$ and either $\alpha=1$, $\omega=-1$, $\beta=0$ or  $\alpha=\omega=0$, $\beta=1$ or $\alpha=0$, $\beta=1$, $\omega=i$. 
\end{enumerate}

\end{enumerate}


\item \label{p100l}
$
A=1\oplus \lambda
$, \quad $|\lambda|\in \{1,0\}$\\
We have
\begin{align*}
cP^*AP=
c
\begin{bmatrix}
\overline{x} & \overline{u}\\
\overline{y} & \overline{v}
\end{bmatrix}
\begin{bmatrix}
1 & 0\\
0 & \lambda
\end{bmatrix}
\begin{bmatrix}
x & y\\
u & v
\end{bmatrix}
& =
c
\begin{bmatrix}
|x|^2+\lambda|u|^2 & \overline{x}y+\lambda \overline{u}v\\
\overline{y}x+\lambda\overline{v}u & |y|^2+\lambda|v|^2
\end{bmatrix}.
\end{align*}
Thus (\ref{eqcPAE}) multiplied by $c^{-1}$ and written componentwise (also rearranged) yields: 
\begin{align}\label{eq1l}
& |x|^2+\lambda|u|^2 -c^{-1}\alpha=c^{-1}\epsilon_1, \quad && \overline{x}y+\lambda \overline{u}v-c^{-1}\beta=c^{-1}\epsilon_2,\\
&\overline{y}x+\lambda\overline{v}u-c^{-1}\gamma=c^{-1}\epsilon_3,\quad && |y|^2+\lambda|v|^2-c^{-1}\omega=c^{-1}\epsilon_4.\nonumber
\end{align}
Subtracting the second complex-conjugated equation (and multiplied by $\lambda$) from the third equation (and multiplied by $\overline{\lambda}$) for $\beta, \gamma\in \mathbb{R}$ further gives
\begin{align}\label{eq1l3}
&2\Ima (\lambda)\overline{v}u-c^{-1}\gamma+\overline{c}^{-1}\beta= c^{-1}\epsilon_3- \overline{c}^{-1}\overline{\epsilon}_2,\\
&-2\Ima (\lambda)\overline{y}x-c^{-1}\overline{\lambda}\gamma+\overline{c}^{-1}\lambda\beta=c^{-1}\overline{\lambda}\epsilon_3-\overline{c}^{-1}\lambda\overline{\epsilon}_2\nonumber .
\end{align}

\begin{enumerate}[label=(\alph*),wide=0pt,itemindent=2em,itemsep=6pt]

\item 
$\lambda=e^{i\theta}$, \quad $0\leq \theta \leq \pi$

By taking the imaginary and the real parts ob the first and the last equation of (\ref{eq1l}) for $\lambda=e^{i\theta}$ we obtain 
\begin{align}\label{eqa0}
&(\sin \theta) |u|^2=\Ima (c^{-1}\alpha+c^{-1}\epsilon_1), \quad |x|^2+(\cos \theta)|u|^2=\Rea (c^{-1}\alpha+c^{-1}\epsilon_1),\\
&(\sin \theta) |v|^2=\Ima (c^{-1}\omega+c^{-1}\epsilon_1), \quad |y|^2+(\cos \theta)|v|^2=\Rea (c^{-1}\omega+c^{-1}\epsilon_1).\nonumber
\end{align}

\begin{enumerate}[label=(\roman*),wide=0pt,itemindent=4em,itemsep=3pt]

\item
$
\widetilde{A}=
\begin{bsmallmatrix}
0 & 1\\
\tau & 0
\end{bsmallmatrix}
$,
$0\leq \tau\leq 1$\\
If $\theta=0$ then (\ref{eqa0}) for $\theta=\alpha=\omega=0$ implies that $|x|^{2},|u|^{2},|y|^{2},|v|^{2}\leq \|E\|$, which contradicts the second equation of (\ref{eq1l}) for $\beta=\lambda=1$ ($\theta=0$) with $\|E\|<\frac{1}{3}$.
Next, let $\theta=\pi$ ($\lambda=-1$). From (\ref{eq1l3}) for $\Ima(\lambda)=0$, $\beta=1$, $\gamma=\tau$ it then follows $|1-\tau|\leq 2\|E\|$, which fails if $\tau\neq 1$, $\|E\|<\frac{1-\tau}{2}$. Further, when $\tau=1$ we deduce from (\ref{cE}) (Lemma \ref{lemadet}) that $c^{-1}=(-1)^{k}+\overline{g}$, $|\overline{g}|\leq 12\|E\|$, so the second equation of (\ref{eq1l}) for $\lambda=-1$, $\beta=1$ yields $\big|\overline{x}y-\overline{u}v-(-1)^{k}\big|\leq 13\|E\|$.
By combining it with the first and the last equation of (\ref{eq1l}) for $\lambda=-1$, $\alpha=\omega=0$ we get (\ref{lemapsi11}) with (C\ref{r12}).

\quad
It is left to consider $0< \theta <\pi$. 
From (\ref{eqa0}) for $\alpha=\omega=0$ it follows that
\begin{equation}\label{eqo0}
|u|^2,|v|^2\leq \tfrac{\|E\|}{\sin \theta}, \qquad 
|x|^2,|y|^2\leq \|E\|(1+\cot \theta),\qquad 0< \theta<\pi.
\end{equation}
Applying the triangle inequality to the second equation of (\ref{eq1l}), and using the estimates (\ref{eqo0}) leads to an inequality which fails for $\|E\|<(2+\cot \theta+\tfrac{1}{\sin \theta})^{-1}$:
\[
1-\|E\|\leq |\overline{x}y+\lambda \overline{u}v|\leq 
\|E\|(1+\cot \theta)+\tfrac{1}{\sin \theta }\|E\|.
\]

\item
$
\widetilde{A}=
\begin{bsmallmatrix}
0 & 1\\
1 & i
\end{bsmallmatrix}
$\\
By (\ref{cE}) in Lemma \ref{lemadet} we have $c^{-1}=-i(-1)^{k}e^{i\frac{\theta}{2}}+\overline{g}$, $|g|\leq 12\|E\|$, provided that $\|E\|\leq \frac{1}{12}$.
%
The third equation of (\ref{eqa0}) for $\theta=\pi$, $\omega=i$ yields $0=(-1)^k+\Ima (i\overline{g}+c^{-1}\epsilon_4)$, which failes for $\|E\|<\frac{1}{13}$. 
If $\theta=0$, then the second and the last equation of (\ref{eqa0}) for $\alpha=0$, $\omega=i$ imply $|x|^{2},|u|^{2}\leq \|E\|$ and $|y|^{2},|v|^{2}\leq 1+\|E\|$, respectively. From the second equation of (\ref{eq1l}) for $\beta=1$, $\lambda=e^{i \theta}$ we then conclude $1-\|E\|\leq 2\sqrt{\|E\|(1+\|E\|)}$, so we have a contradiction for any $\|E\|$ small enough.

\quad
Finally, for $0  <\theta<\pi$ we use (\ref{eqa0}) for $\alpha=0$, $\omega=i$, $c^{-1}=-i(-1)^{k}e^{i\frac{\theta}{2}}+\overline{g}$ to get $|u|^2\leq \tfrac{\|E\|}{\sin \theta}$, $|x|^2\leq \|E\|(1+\cot \theta)$ (see (\ref{eqo0})) and
\begin{align*}
&|\sin\theta|\, |v|^{2}=|(-1)^k\sin (\tfrac{\theta}{2})+\Ima (i\overline{g}+c^{-1}\epsilon_4)|\leq |\sin \tfrac{\theta}{2}|+13\|E\|\leq 1+13\|E\|,\\
&|y|^{2}=\big|(-1)^k \cos(\tfrac{\theta}{2})+\Rea(i\overline{g}+c^{-1}\epsilon_4)-\cos \theta|v|^2\big|\leq (1+13\|E\|)+\cot \theta\big (1+13\|E\| \big).
\end{align*}
Applying these estimates to the second equation of (\ref{eq1l}) for $\beta=1$, $\lambda=e^{i \theta}$ we get an inequality, which fails for every sufficiently small $\|E\|$:
\[
1-\|E\|\leq |\overline{x}y+e^{i\theta }\overline{u}v|\leq 
\big( (\cot \theta+1)+\tfrac{1}{|\sin \theta|}  \big)\sqrt{\|E\|(1+13\|E\|)}.
\]

\item $\widetilde{A}=\alpha\oplus 0$, $\alpha\in \{0,1\}$\\
If $0\leq \theta <\pi$ ($\theta=\pi$) the equations (\ref{eq1l}) for $\omega=\beta=\gamma=0$, $\lambda=e^{i\theta}$ give the first part of (the complete statement) (\ref{lemapsi11}) with (C\ref{r3}) (with (C\ref{r9}) for $\omega=0$, $\sigma=-1$). To see the converse for (C\ref{r3}), we fix $s>0$ and assume $|y|^{2},|v|^{2}\leq s$ and $\big||x|^{2}+e^{i\theta}|u|^{2} -c^{-1}\alpha \big|\leq s$. By observing the imaginary and the real part of $|x|^{2}+e^{i\theta}|u|^{2} -c^{-1}\alpha$ for $\theta=0$ we deduce that $|x|^{2},|u|^{2}\leq |\alpha|+s$ and
\[
|\sin \theta|\,|u|^{2} \leq |\alpha|+s, \quad
|x|^{2}\leq |\alpha|+|\cot \theta| \big(|\alpha|+s\big)+s=\big(|\alpha|+|\cot \theta|\big)\big(|\alpha|+s\big), \,\,0< \theta<\pi.
\]
The second (third) equation of (\ref{eq1l}) for $\beta=0$ ($\gamma=0$), $\lambda=e^{i\theta}$ then yields $|\epsilon_2|,|\epsilon_3|\leq \sqrt{(|\alpha|+|\cot \theta|)(|\alpha|+s)s}$, so (\ref{lemapsi11}) with (C\ref{r3}) is proved. The converse for (C\ref{r9}) is trivial.

\item
$
\widetilde{A}=
1\oplus \widetilde{\theta}
$,\quad
$0\leq \widetilde{\theta} \leq \pi$.\\
By (\ref{cE}) (Lemma \ref{lemadet}) we have 
$c^{-1}=(-1)^k e^{i\frac{\theta-\widetilde{\theta}}{2}}+\overline{g}$, $|\overline{g}|\leq 12\|E\|$, assuming that $\|E\|\leq \frac{1}{12}$. 
Thus the first and the last equation of (\ref{eq1l}) for $\alpha=1$, $\omega=e^{i\widetilde{\theta}}$ are of the form:
\begin{align}\label{eq1l5}
&|x|^2+e^{i\theta}|u|^2 =(-1)^k e^{i\frac{\theta-\widetilde{\theta}}{2}}+(\overline{g}+c^{-1}\epsilon_1),\\
&|y|^2+e^{i\theta}|v|^2=(-1)^k e^{i\frac{\widetilde{\theta}+\theta}{2}}+(\overline{g}e^{i\widetilde{\theta}}+c^{-1}\epsilon_4).\nonumber
\end{align}
We now take the 
imaginary parts of equations (\ref{eq1l5}), slightly rearrange the terms and use the triangle inequality:
\begin{align}\label{eq1lim}
& \bigl| |u|^2\sin \theta-(-1)^k\sin (\tfrac{\theta-\widetilde{\theta}}{2})\bigr| \leq  \bigl| \Ima (\overline{g})+\Ima (\epsilon_1)\bigr|\leq 13\|E\|,\\
& \bigl| |v|^2\sin \theta-(-1)^k\sin (\tfrac{\widetilde{\theta}+\theta}{2})\bigr|\leq \bigl| \Ima (\overline{g}e^{i\widetilde{\theta}}+\overline{c}^{-1}\epsilon_4)\bigr|\leq 13\|E\|.\nonumber
\end{align}
%
%
In particular we have
\begin{equation*}
|u|^2\sin \theta \geq |\sin (\tfrac{\widetilde{\theta}-\theta}{2})|-13\|E\|, \quad 
|v|^2\sin \theta \geq |\sin (\tfrac{\widetilde{\theta}+\theta}{2})|-13\|E\|.
\end{equation*}
Multiplying these inequalities and using the triangle inequality we deduce that 
\begin{align}\label{eq1l4}
\sin ^2\theta|uv|^2\geq & 
 \big|\sin (\tfrac{\widetilde{\theta}-\theta}{2})\sin (\tfrac{\widetilde{\theta}+\theta}{2})\big|-13\|E\|\bigl(\big|\sin (\tfrac{\widetilde{\theta}-\theta}{2})\big|+\big|\sin (\tfrac{\widetilde{\theta}+\theta}{2})\big|  \bigr)-169\|E\|^2 .				   
\end{align}
From (\ref{eq1l3}) for $\beta=\gamma=0$, $\Ima (\lambda)=\sin \theta$ we get
$\big|(\sin \theta)\overline{v}u\big|\leq  \|E\|$.
Combining this with (\ref{eq1l4}) we obtain that 
\begin{equation}\label{eqtnwt}
 \|E\|^{2}\geq \big|\sin (\tfrac{\widetilde{\theta}-\theta}{2})\sin (\tfrac{\widetilde{\theta}+\theta}{2})\big|-26\|E\|\sin \widetilde{\theta}\cos \theta -169\|E\|^2.
\end{equation}

\quad
If $\widetilde{\theta}=\theta\in \{0,\pi\}$ then equations (\ref{eq1l5}) and the second equation of (\ref{eq1l}) for $\lambda=e^{i\theta}$ allready give us the statement (\ref{lemapsi11}) for (C\ref{r9}) in case $\omega=\sigma\in \{1,-1\}$. (Note that if $\theta=\widetilde{\theta}=0$, then $k$ is even.) 
Further, when $\widetilde{\theta}\neq \theta\in \{0,\pi\}$ the first equation of (\ref{eq1lim}) fails for $\|E\|\leq \frac{1}{13} \big|\sin (\tfrac{\theta-\widetilde{\theta}}{2})\big|$.

\quad 
Next, let $0<\theta<\pi$. For $\theta\neq\widetilde{\theta}$ we have $\frac{1}{2}(\theta+\widetilde{\theta}),\frac{1}{2}(\theta-\widetilde{\theta})\neq l\pi$, $l\in \mathbb{Z}$, hence it is easy to choose $\|E\|$ so small that(\ref{eqtnwt}) fails. 
If $\theta=\widetilde{\theta}$, then the first equation of (\ref{eq1lim}) leads to $|u|^{2}\leq \frac{\|E\|}{\sin \theta}$, and by comparing the real parts of the first equation of (\ref{eq1l5}), slightly rearranging the terms, we furter get:
\begin{align*}
\bigl||x|^2 -(-1)^{k}\bigr|  = \bigl|\Rea (\overline{g}+\overline{c}^{-1}\epsilon_1)-\cos \theta |u|^2\bigr|  \leq 
13(\cot \theta +1)\|E\|.
\end{align*}
The second equation of (\ref{eq1lim}) also yields $\big||v|^{2}-(-1)^{k} \big|\leq \frac{13}{\sin \theta}\|E\|$, thus $\|E\|<\frac{\sin \theta}{13}$ implies that $k$ is even. This concludes the proof of (\ref{lemapsi11}) about (C\ref{r1}). 

\end{enumerate}

\item $\lambda=0$\ \quad (It suffices to consider the case when $\det \widetilde{A}=0$.)\\
When 
$\widetilde{A}=\alpha\oplus 0$, $\alpha\in\{0,1\}$ 
the statement (\ref{lemapsi11}) with (C\ref{r11}) follows immediately from the first and the third equation of (\ref{eq1l}) for $\omega=\lambda=0$. Applying (\ref{ocenah}) for $\|E\|\leq \frac{1}{2}$ to the first equation of (\ref{eq1l}) for $\alpha=1$, $\lambda=0$ (multiplied by $c$), yields $\psi=\arg(c)\in (-\frac{\pi}{2},\frac{\pi}{2})$, $|\sin \psi|\leq 2\|E\|$. Therefore $c-1=e^{i\psi}-1=2i\sin (\frac{\psi}{2})e^{i\frac{\psi}{2}}$ with $|\sin (\frac{\psi}{2})|\leq|\sin \psi|\leq 2\|E\|$. 

\quad
If $\widetilde{A}=\begin{bsmallmatrix}
0 & 1\\
0 & 0
\end{bsmallmatrix}$, the first and the last equation of (\ref{eq1l}) for $\lambda=\alpha=\omega=0$ yield $|x|^{2},|y|^{2}\leq \|E\|$, thus the third equation of (\ref{eq1l}) for $\lambda=0$, $\gamma=0$
fails for $\|E\|<\frac{1}{2}$.

\end{enumerate}

\item \label{p01t0}
$
A=\begin{bsmallmatrix}
0 & 1\\
\tau & 0
\end{bsmallmatrix}
$, \quad $0\leq \tau \leq 1$\\
We calculate
\begin{align*}
cP^*AP=
c
\begin{bmatrix}
\overline{x} & \overline{u}\\
\overline{y} & \overline{v}
\end{bmatrix}
\begin{bmatrix}
0 & 1\\
\tau & 0
\end{bmatrix}
\begin{bmatrix}
x & y\\
u & v
\end{bmatrix}
& =
c
\begin{bmatrix}
\overline{x}u+\tau\overline{u}x & \overline{x}v+\tau \overline{u}y\\
\tau \overline{v}x+\overline{y}u & \overline{y}v+\tau\overline{v}y
\end{bmatrix}, \qquad 0\leq \tau\leq 1.
\end{align*}
Thus (\ref{eqcPAE}) multiplied by $c^{-1}$ and rearranged is equivalent to 
\begin{align}\label{equ01t}
\overline{x}u+\tau\overline{u}x -c^{-1}\alpha=c^{-1}\epsilon_1, \quad \overline{x}v+\tau \overline{u}y-c^{-1}\beta=c^{-1}\epsilon_2,\\
\tau \overline{v}x+\overline{y}u-c^{-1}\gamma=c^{-1}\epsilon_3,\quad \overline{y}v+\tau\overline{v}y-c^{-1}\omega=c^{-1}\epsilon_4.\nonumber
\end{align}
%
Rearranging the terms of the first and the last equation immediately yields 
\begin{align}\label{eq01t2}
(1+\tau)\Rea(\overline{x}u)+i(1-\tau)\Ima(\overline{x}u)=c^{-1}\alpha+c^{-1}\epsilon_1,\\
(1+\tau)\Rea(\overline{y}v)+i(1-\tau)\Ima(\overline{y}v)=c^{-1}\omega+c^{-1}\epsilon_4,\nonumber
\end{align}
while multiplying the third (second) complex-conjugated equation with $\tau$, subtracting it from the second (third) equation, and rearranging the terms, give
\begin{align}\label{eq011t3}
(1-\tau^2)\overline{x}v=&c^{-1}(\beta+\epsilon_2)-\tau\overline{c}^{-1}(\overline{\gamma}+\overline{\epsilon_3})= 
(c^{-1}\beta-\tau\overline{c}^{-1}\overline{\gamma})+(c^{-1}\epsilon_2-\tau\overline{c}^{-1}\overline{\epsilon_3})\\
(1-\tau^{2})\overline{y}u= &c^{-1}(\gamma+\epsilon_3)-\tau \overline{c}^{-1}(\overline{\beta}+\overline{\epsilon_2})=
(c^{-1}\gamma-\tau \overline{c}^{-1}\overline{\beta})+(c^{-1}\epsilon_3-\tau \overline{c}^{-1}\overline{\epsilon_2}).\nonumber
\end{align}

\begin{enumerate}[label=(\alph*),wide=0pt,itemindent=2em,itemsep=3pt]
\item $\tau=1$\\
Since 
$A=
\begin{bsmallmatrix}
0 & 1\\
1 & 0
\end{bsmallmatrix}$
is $*$-congruent to $1\oplus -1$, the existence of paths in this case was allready analysed in \ref{p100l} It is only left to check (\ref{lemapsi11}) with (C\ref{r7}). If either
$\widetilde{A}=
\begin{bsmallmatrix}
0 & 1\\
1 & 0
\end{bsmallmatrix}
$ or $\widetilde{A}=1\oplus-1$, then by (\ref{cE}) we have $c^{-1}=(-1)^{k}+\overline{g}$, $k\in \{0,1\}$, $|g|\leq 12\|E\|$, provided that $\|E\|\leq \frac{1}{12}$. 
The second (third) equation of (\ref{equ01t}) and (\ref{eq01t2}) for $\tau=1$ then imply:
\begin{align}\label{equal10}
&|\overline{x}v+\overline{u}y-(-1)^{k}\beta|\leq \|E\|+12|\beta|\,\|E\|, \quad \beta\in \{0,1\}\\
&\bigl|2\Rea(\overline{x}u)-(-1)^{k}\alpha\bigr| \leq \|E\|+12|\alpha|\,\|E\|, \; \bigl|2\Rea(\overline{y}v)-(-1)^{k}\omega\bigr| \leq \|E\|+12|\omega|\,\|E\|.\nonumber
\end{align}

\quad
The inequalities (\ref{equal10}) are valid also if we consider (\ref{equ01t}) (and (\ref{eq01t2})) for $\alpha\in \{0,1\}$, $\beta=\gamma=\omega=0$ ($\widetilde{A}=\alpha\oplus 0$).
Note that for $\alpha=1$ the first equation of (\ref{eq01t2}) for $\tau=1$, and multiplied by $c$, is of the form 
$2c\Rea(\overline{x}u)=1+\epsilon_1$. Therefore, by applying (\ref{ocenah}) for $\|E\|\leq \frac{1}{2}$ we get that $c=(-1)^{k}e^{i\psi}$, $k\in \mathbb{Z}$, $\psi\in (-\frac{\pi}{2},\frac{\pi}{2})$, $|\sin \psi|\leq 2\epsilon$. Moreover, $c-(-1)^{k}=(-1)^{k}2i\sin (\frac{\psi}{2})e^{i\frac{\psi}{2}}$ with $|\sin (\frac{\psi}{2})|\leq|\sin \psi|\leq 2\|E\|$. 

\quad
Conversely, we assume that the expressions (C\ref{r7}) for $c=(-1)^{k}$ (precisely the left-hand sides in (\ref{equ01t})) are bounded from above by some $s>0$. Thus the right-hand sides of (\ref{equ01t}) (and hence $\|E\|$) are bounded from above by $s$ as well.

\item $0\leq\tau<1$\\
From (\ref{eq01t2}) we obtain that
\begin{equation}\label{xyuvt2}
(1+\tau)|\overline{x}u|\geq |\alpha+\epsilon_1|\geq (1-\tau)|\overline{x}u|,\qquad (1+\tau)|\overline{y}v|\geq |\omega+\epsilon_4|\geq (1-\tau)|\overline{y}v|.
\end{equation}
By multiplying the left-hand and the right-hand sides of these inequalities we get
\begin{align}\label{xyuvt22}
(1+\tau)^{2}|\overline{x}u\overline{y}v|\geq |\alpha\omega|-\bigl(|\alpha|+|\omega|\bigr)\|E\|-\|E\|^2,\\
%
\label{xyuvt}
|\alpha\omega|+\bigl(|\alpha|+|\omega|\bigr)\|E\|+\|E\|^2 \geq (1-\tau)^{2}|\overline{x}u\overline{y}v|.
\end{align}

\begin{enumerate}[label=(\roman*),wide=0pt,itemindent=2em,itemsep=3pt]

\item $\widetilde{A}=\begin{bsmallmatrix}
0 & 1\\
\gamma & \omega
\end{bsmallmatrix}$, $0 \leq \gamma\leq 1$, $\gamma\neq \tau$; and either $\omega=0$ or $\gamma=1$, $\omega=i$\\
Equations (\ref{eq011t3}) for $\beta=1$, $0\leq \gamma \leq 1$ imply 
\begin{align*}
&
(1-\tau^2)|\overline{x}v|\geq |\tau \gamma-1|-(\tau+1)\|E\|,\\
&
(1-\tau^{2})|\overline{u}y|\geq |\gamma-\tau|-(1+\tau)\|E\|.\nonumber
\end{align*}
By combining these inequalities and making some trivial estimates we obtain
\[
(1-\tau^{2})^2|\overline{y}u\overline{x}v|\geq |\tau\gamma-1|\,|\gamma-\tau|-(1+\tau)\bigl(\tau\gamma+1+\gamma+\tau\bigr)\|E\|-(1+\tau)^2\|E\|^2.
\]
Together with (\ref{xyuvt}) for $\alpha=0$ and using $\|E\|\geq \|E\|^{2}$ we get 
\[
(1+\tau)^2(1+|\omega|)\|E\|\geq |\tau\gamma-1|\,|\gamma-\tau|-(1+\tau)^{2}(\gamma+1)\|E\|-(1+\tau)^2\|E\|,
\]
which fails for $\|E\|<\frac{|\tau\gamma-1|\,|\gamma-\tau|}{(1+\tau)^2(\gamma+|\omega|+3)}$; remember $\gamma\neq \tau$, $0\leq\gamma\leq 1$, $0\leq\tau< 1$.

\item
$
\widetilde{A}=\begin{bsmallmatrix}
0 & 1\\
\tau & 0
\end{bsmallmatrix}
$\\
By (\ref{cE}) in Lemma \ref{lemadet} for $\tau\neq 0$, $\|E\|\leq \frac{\tau}{12}\leq \frac{1}{12}$ we have $c^{-1}=(-1)^{k}+\overline{g}$, $k\in\mathbb{Z}$, $|g|\leq \frac{12}{\tau}\|E\|$, thus (\ref{eq011t3}) for $\beta=1$, $\gamma=\tau$ yields
\begin{align*}
&(1-\tau^2)\overline{x}v=\big((-1)^{k}(1-\tau^2)-g\tau^{2}+\overline{g}\big)+(c^{-1}\epsilon_2-\tau\overline{c}^{-1}\overline{\epsilon_3})\\
&(1-\tau^{2})\overline{y}u=-2\tau\Ima(\overline{g})+(c^{-1}\epsilon_3-\tau \overline{c}^{-1}\overline{\epsilon_2}).\nonumber
\end{align*}
It further implies  
\[
(1-\tau^{2})|\overline{y}u|\leq 12\tau\|E\|+(1+\tau)\|E\|, \quad (1-\tau^2)\big|\overline{x}v-(-1)^{k}\big|\leq \tfrac{12(\tau+1)}{\tau}\|E\|+(\tau+1)\|E\|.
\]
Moreover, from (\ref{eq011t3}) for $\tau=\gamma=0$, $\beta=1$ we deduce $|\overline{y}u|\leq \|E\|$, $|\overline{x}v-c^{-1}|\leq \|E\|$, and (\ref{xyuvt2}) for $\alpha=\omega=0$ concludes the proof of (\ref{lemapsi11}) for  (C\ref{r6}). (The converse is apparent.)

\item $\widetilde{A}=\alpha\oplus \omega$ 

From (\ref{eq011t3}) for $\beta=\gamma=0$ it follows
\begin{equation}\label{xbarv}
(1-\tau^{2})|\overline{x}v|\leq (1-\tau)\|E\|, \quad (1-\tau^{2})|\overline{u}y|\leq (1-\tau)\|E\|,\quad (1+\tau)^{2}|\overline{x}v\overline{u}y|\leq  \|E\|^2.
\end{equation}
%
Combining this with (\ref{xyuvt22}), rearranging the terms we get
\begin{equation}\label{estad}
|\alpha\omega|\leq 2\|E\|^2+\big(|\alpha|+|\omega|\big)\|E\|.
\end{equation}
If $\alpha,\omega\neq 0$ then by choosing $\|E\|<\frac{|\alpha\omega|}{|\alpha|+|\omega|+2}$ we contradict the above inequality. 
Furthermore, (\ref{eq01t2}), (\ref{xyuvt2}), (\ref{xbarv}) give the first part of (\ref{lemapsi11}) for (C\ref{r4}).
Conversely, assuming that the expressions of (C\ref{r4}) for $\alpha=1$ are bounded from above by $s>0$, then (\ref{equ01t}) (and \ref{eq01t2}) imply that $\|E\|\leq 3s$.
%
\end{enumerate} 

\end{enumerate}

\end{enumerate}

This completes the proof of the lemma.
\end{proof}

\section{Proof of Theorem \ref{izrek}}\label{proofT}

\begin{proof}[Proof of Theorem \ref{izrek}.]
Recall that the existence of a path $(\widetilde{A},\widetilde{B})\to (A,B)$ in the closure graph for the action (\ref{aAB}),
immediately implies (see Lemma \ref{pathlema}): 
\begin{equation}\label{pogoj}
\widetilde{A}\to A, \qquad \widetilde{B}\to B, \qquad \left|\det \widetilde{A}\det B\right|=\left|\det \widetilde{B}\det A\right|.
\end{equation}
When any of the conditions (\ref{pogoj}) is not fullfiled, then $(\widetilde{A},\widetilde{B})\not \to (A,B)$ and we allready have a lower estimate on the distance from $(\widetilde{A},\widetilde{B})$ to the orbit of $(A,B)$ (see Lemma \ref{pathlema}, Lemma \ref{lemapsi2}, Lemma \ref{lemapsi1}). Further, $(\widetilde{A},0_2)\to (A,0_2)$ if and only if $\widetilde{A}\to A$, and trivially $(A,B)\to (A,B)$ for any $A,B$.

From now on we suppose $(\widetilde{A},\widetilde{B})\neq (A,B)$, $B\neq 0$, and such that (\ref{pogoj}) is valid. Let
\begin{equation}\label{eqABEF}
cP^{*}AP=\widetilde{A}+E, \quad P^{T}BP=\widetilde{B}+F, \qquad c\in S^{1},\,P\in GL_2(\mathbb{C}), E,F\in \mathbb{C}^{2\times 2}.
\end{equation}
Due to Lemma \ref{lemapsi1}, Lemma \ref{lemadet} the first equation of (\ref{eqABEF}) yields the restrictions on $P$, $c$ imposesed by $\|E\|$. Using these we then analyse the second equation of (\ref{eqABEF}).
When it implies an inequality that fails for any sufficiently small $E$, $F$, it proves $(\widetilde{A},\widetilde{B})\not \to(A,B)$. The inequality just mentioned also provides the estimates how small $E$, $F$ should be; this calculation is very straightforward but is often omitted.

On the other hand, if given matrices $A$, $B$, $\widetilde{A}$, $\widetilde{B}$ we can choose $E$ and $F$ in (\ref{eqABEF}) to be arbitrarily small, this will yield $(\widetilde{A},\widetilde{B})\to (A,B)$.
In most cases we find $c(s)\in S^{1}$, $P(s)\in GL_2(\mathbb{C})$ such that
\begin{equation}\label{cPepsi}
c(s) \bigl(P(s)\bigr)^*AP(s)-\widetilde{A}=E(s)\stackrel{s\to 0}{\longrightarrow} 0, \quad \bigl(P(s)\bigr)^TBP(s)-\widetilde{A}=F(s)\stackrel{s\to 0}{\longrightarrow} 0. 
\end{equation}
However, to confirm the existence of a path we can also prove the existence of suitable solutions of (\ref{eqABEF}) by using the last part of Lemma \ref{lemapsi1} (\ref{lemapsi11}).

Throughout the rest the proof we denote $\delta=\nu\|E\|$ (the constant $\nu>0$ is provided by Lemma \ref{lemapsi1}), $\epsilon=\|F\|$,
\begin{equation*}
B=\begin{bmatrix}
a & b\\
b & d
\end{bmatrix},\quad 
\widetilde{B}=\begin{bmatrix}
\widetilde{a} & \widetilde{b}\\
\widetilde{b} & \widetilde{d}
\end{bmatrix},\qquad 
F=\begin{bmatrix}
\epsilon_1 & \epsilon_2\\
\epsilon_2 & \epsilon_4
\end{bmatrix},
\qquad
P=\begin{bmatrix}
x & y\\
u & v
\end{bmatrix},
\end{equation*}
where sometimes polar coordinates for $x,y,u,v$ in $P$ might be prefered:
\begin{equation}\label{polarL}
x=|x|^{i\phi},\quad y=|y|e^{i\varphi},\quad u=|u|e^{i\eta},\quad v=|v|e^{i\kappa}, \qquad \phi,\varphi,\eta,\kappa\in \mathbb{R}.
\end{equation}
The second matrix equation of (\ref{eqABEF}) can thus be written componentwise as:
\begin{align}\label{eqBF1}
&ax^2+2bux+du^2=\widetilde{a}+\epsilon_1,\nonumber \\
&axy+buy+bvx+duv=\widetilde{b}+\epsilon_2,\\ 
&ay^2+2bvy+dv^2=\widetilde{d}+\epsilon_4. \nonumber
\end{align}
For the sake of simplicity some estimates in the proof are crude, and it is allways assumed $\epsilon,\delta\leq \frac{1}{2}$. When appying Lemma \ref{lemadet} with $A,\widetilde{A}$ or $B,\widetilde{B}$ nonsingular we in addition take $\frac{\delta}{\nu}=\|E\|\leq \tfrac{|\det \widetilde{B}|}{8\|\widetilde{B}\|+4}$ or $\epsilon=\|F\|\leq \tfrac{|\det \widetilde{B}|}{8\|\widetilde{B}\|+4}$, respectively.
Furthermore, we use the notation $(\widetilde{A},\widetilde{B})\dashrightarrow (A,B)$ when the existence of a path is yet to be considered. 

We splitt our analysis
into several cases. (For normal forms recall Lemma \ref{lemalist}.)

\begin{enumerate}[label={\bf Case \Roman*.},ref={Case \Roman*},   wide=0pt,itemsep=10pt]

\item \label{p1t1t}
$
(1\oplus e^{i\theta},\widetilde{B})\dashrightarrow
(1\oplus e^{i\theta},B)
$, \quad $0<\theta<\pi$,\qquad $B\neq \widetilde{B}$

Denoting $u^{2}=\delta_2$, $y^{2}=\delta_1$ and slightly rearranging the terms in (\ref{eqBF1}) yields
\begin{align}\label{eqBF1te}
&ax^2-\widetilde{a}=\epsilon_1-2bx\sqrt{\delta_2}-d\delta_2\nonumber \\
&bvx-\widetilde{b}=\epsilon_2-(ax\sqrt{\delta_1}+b\sqrt{\delta_1\delta_2}+dv\sqrt{\delta_2})\\ 
&dv^2-\widetilde{d}=\epsilon_4-2bv\sqrt{\delta_1}-d\delta_1. \nonumber
\end{align}
From Lemma \ref{lemapsi1} (\ref{lemapsi11}) for (C\ref{r1}) we get $|\delta_1|,|\delta_2|\leq \delta$ and $\bigl||v|^{2}-1\bigr|,\bigl||x|^{2}-1\bigr|\leq \delta$ (therefore $\sqrt{1-\delta}\leq |x|,|v|\leq \sqrt{1+\delta}$, $\bigl||vx|-1\bigr|\leq \delta$).
By applying the triangle inequality we conclude from the first equation of (\ref{eqBF1te}) that
\begin{align*}
|d|\delta+2|b|\sqrt{\delta^{2}+\delta} +\epsilon \geq |ax^2-\widetilde{a}| 
 \geq 
\bigl||ax^2|-|\widetilde{a}|\bigr|
 = \bigl|(|a|-|\widetilde{a}|)+|a|(|x|^2-1)\bigr|\geq \bigl||a|-|\widetilde{a}|\bigr|-|a|\delta 
\end{align*}
and similarly the last two equations of (\ref{eqBF1te}) yield
\begin{align*}
&\big(|a|+|d|\big)\delta\sqrt{1+\delta}+|b|\delta +\epsilon \geq |bxv-\widetilde{b}|\geq \Bigl|\bigl(|b|-|\widetilde{b}|\bigr)+|b|\bigl(|vx|-1\bigr)\Bigr|\geq \bigl||b|-|\widetilde{b}|\bigr|-|b|\delta, \\ 
&|a|\delta+2|b|\sqrt{\delta}\sqrt{1+\delta} +\epsilon \geq |dv^2-\widetilde{d}|\geq \Bigl|\bigl(|d|-|\widetilde{d}|\bigr)+|d|\bigl(|v|^2-1\bigr)\Bigr|\geq \bigl||d|-|\widetilde{d}|\bigr|-|d|\delta. 
\end{align*}
Since $B\neq \widetilde{B}$, a comparison of the left-hand and the right-hand sides of the above inequalities implies that at least one of them fails
for
$\epsilon, \delta$ such that 
\[
2\|B\|(\delta+\sqrt{\delta+\delta^{2}})+\epsilon<\max\Bigl\{\bigl||a|-|\widetilde{a}|\bigr|,\bigl||b|-|\widetilde{b}|\bigr|,\bigl||d|-|\widetilde{d}|\bigr| \Bigr\}\neq 0.
\]

\item \label{p10011001} $
(1\oplus \sigma,\widetilde{B})\dashrightarrow
(1\oplus \sigma,B)
$,\quad $\sigma\in\{1,-1\}$\\
From Lemma \ref{lemapsi1} (\ref{lemapsi11}) for (C\ref{r9}) for $\alpha=1$, $\omega=\sigma\in \{1,-1\}$ we have
\begin{align}\label{lemapsi111}
\bigl||x|^2+\sigma|u|^2-(-1)^{k}\bigr|\leq \delta, \,\, |\overline{x}y+\sigma\overline{u} v|\leq \delta,\, \,\bigl||y|^2+\sigma |v|^2-\sigma (-1)^{k}\bigr|\leq \delta, \quad k\in \mathbb{Z}.
\end{align}

\begin{enumerate}[label=(\alph*),wide=0pt,itemindent=2em,itemsep=6pt]
\item \label{p1t1tbi}
$\widetilde{B}=\widetilde{a}\oplus \widetilde{d}$,\,\, $0\leq a\leq d$, \quad $B=a\oplus d$, \,\,$0\leq \widetilde{a}\leq \widetilde{d}$, \qquad $ad=\widetilde{a}\widetilde{d}$ (see (\ref{pogoj})) 

From (\ref{eqBF1}) applied for $b=\widetilde{b}=0$ we get:
\begin{align}\label{eqBFadad}
&ax^{2}+du^2=\widetilde{a}+\epsilon_1,\nonumber \\
&axy+duv=\epsilon_2,\\ 
&ay^2+dv^{2}=\widetilde{d}+\epsilon_4. \nonumber
\end{align}
From the second equation of (\ref{eqBFadad}) and the second inequality (\ref{lemapsi111}), it follows that $\big||axy|-|duv|\big|\leq \epsilon$ and $\big||dxy|-|duv|\big|\leq d\delta$, respectively, and thus 
\begin{equation}\label{caseIb1}
|a-d|\,|xy|\leq d\delta+\epsilon.
\end{equation}

\begin{enumerate}[label=(\roman*),wide=0pt,itemindent=4em,itemsep=3pt]
\item
$a=\widetilde{a}=0$, $\widetilde{d}\neq d$, $d> 0$\\
Applying the triangle inequality to (\ref{eqBFadad}) for $a=\widetilde{a}=0$ yields 
\begin{equation}\label{equat0}
|u|^2\leq \tfrac{\epsilon}{d}, \qquad 
\bigl||v|^2-\tfrac{\widetilde{d}}{d}\bigr|\leq \bigl|v^2-\tfrac{\widetilde{d}}{d}\bigr|\leq \tfrac{\epsilon}{d},
\end{equation}
respectively. Using (\ref{lemapsi111}) and (\ref{caseIb1}) for $a=d$ we further obtain 
$ |y|^2\leq \tfrac{(\delta+\tfrac{\epsilon}{d})^2}{1-\delta-\tfrac{\epsilon}{d}}$ and 
\begin{equation*}
|y|^2\leq \tfrac{(\delta+\tfrac{\epsilon}{d})^2}{1-\delta-\tfrac{\epsilon}{d}}, \qquad 
\bigl||v|^2-(-1)^{k}\bigr|
 =\Big|\big( \sigma|y|^2+ |v|^2- (-1)^{k}\big)-\sigma|y|^{2}   \Big|
 \leq \delta+\tfrac{(\delta+\tfrac{\epsilon}{d})^2}{1-\delta-\tfrac{\epsilon}{d}}.
\end{equation*}
These inequalities and the last inequality in (\ref{equat0}) eventually lead to
\[
|\tfrac{\widetilde{d}}{d}-1|\leq
\big|\tfrac{\widetilde{d}}{d}-(-1)^{k}\big|
\leq \big|\tfrac{\widetilde{d}}{d}-|v|^{2}\big|+ \big||v|^{2}-(-1)^{k}\big|
\leq \tfrac{\epsilon}{d}+\delta+\tfrac{(\delta+\tfrac{\epsilon}{d})^2}{1-(\delta+\tfrac{\epsilon}{d})},
\]
which fails if $\epsilon,\delta$ are such that $\Omega=\frac{\epsilon}{d}+\delta$ is so small that 
$\Omega+\frac{\Omega^{2}}{1-\Omega}<|\frac{\widetilde{d}}{d}-1|$.

\item
$0<a\leq d$,
$0<\widetilde{a}\leq \widetilde{d}$, \quad $a\neq \widetilde{a}$,$a\neq \widetilde{d}$ \,(remember $ad=\widetilde{a}\widetilde{d}$, $B\neq \widetilde{B}$)

Using the notation (\ref{polarL}) the following calculation is validated trivially:
\begin{align}\label{exu2v2}
&x^{2}+\tfrac{d}{a}u^{2}= e^{2i\phi}\big(|x|^{2}+\sigma |u|^{2}\big)-u^{2} (\sigma e^{2i(\phi-\eta)}-\tfrac{d}{a}), \qquad \sigma\in \{-1,1\},\\
&y^{2}+\tfrac{d}{a}v^{2}=e^{2i\varphi}(|y|^{2}+\sigma |v|^{2})-v^{2} (\sigma e^{2i(\varphi-\kappa)}-\tfrac{d}{a}).\nonumber
\end{align}
Furthermore, one easily computes:
\begin{align}\label{exuv}
\overline{x}y+\sigma\overline{u}v&=e^{-2i\phi}\big((xy+\tfrac{d}{a}uv)+uv(\sigma e^{2i(\phi-\eta)}-\tfrac{d}{a})\big), \qquad \sigma\in \{-1,1\}\\
x\overline{y}+\sigma u\overline{v}&=e^{-2i\varphi}\big((xy+\tfrac{d}{a}uv)+uv(\sigma e^{2i(\varphi-\kappa)}-\tfrac{d}{a})\big).\nonumber
\end{align}
Using the second equation of (\ref{eqBFadad}) and the second inequality of (\ref{lemapsi111}) we conclude
\begin{equation*}
\big|uv(\sigma e^{2i(\phi-\eta)}-\tfrac{d}{a})\big|\leq \tfrac{\epsilon}{a}+\delta, \qquad 
\big|uv(\sigma e^{2i(\varphi-\kappa)}-\tfrac{d}{a})\big|\leq \tfrac{\epsilon}{a}+\delta, \qquad \sigma\in \{-1,1\}.
\end{equation*}
The above implies that at least one of the moduli of the second terms on the right-hand sides of equations (\ref{exu2v2}) is bounded from above by $\frac{\epsilon}{a}+\delta$, 
while from the first and the last inequality in (\ref{lemapsi111}) it follows that the moduli of the first terms on the right-hand sides of (\ref{exu2v2}) are bounded from above by $1+\delta$ and from below by $1-\delta$. 
For $\epsilon=\delta<\frac{\min\{a-\widetilde{a},\widetilde{d}-a\}}{a+2}$ thus the first or the last equation of (\ref{eqBFadad}) fails ($a\neq\widetilde{a},\widetilde{d}$).

\end{enumerate}

\item \label{tjepi}
$B=\begin{bsmallmatrix}
0 & b\\
b & 0
\end{bsmallmatrix}$, $b > 0$, \quad
$\widetilde{B}=\begin{bsmallmatrix}
\widetilde{a} & 0\\
0 & \widetilde{d}
\end{bsmallmatrix}$, $0< \widetilde{a}\leq \widetilde{d}$, \quad $\sigma=-1$ (see Lemma \ref{lemalist})\\
From (\ref{eqBF1}) for $a=d=0$ we obtain that
\begin{align}\label{eqadb}
&2bux=\widetilde{a}+\epsilon_1,\nonumber\\
&buy+bvx=\epsilon_2,\\
&2bvy=\widetilde{d}+\epsilon_4.\nonumber
\end{align}
By using Lemma \ref{lemadet} (\ref{lemadetb}) and $1=|\frac{\det \widetilde{A}}{\det A}|=|\frac{\det \widetilde{B}}{\det B}|=|\frac{\widetilde{a}\widetilde{d}}{-b^{2}}|$ (see (\ref{pogoj})) we deduce that 
%
$\det P=xv-yu=i(-1)^l+\delta'$ with $|\delta'|\leq \frac{\epsilon(4\widetilde{d}+2)}{b^{2}}$, $l\in \mathbb{Z}$.
%
Combining it further with the second equation of (\ref{eqadb}) we get $vx=\frac{1}{2}\big(i(-1)^k+\delta'+\frac{\epsilon_2}{b}\big)$ with $|vx|\geq \frac{1}{2}(1-\frac{\epsilon(4\widetilde{d}+2+b)}{b^{2}})$.
By taking $\epsilon \leq \frac{b}{4}(1-\frac{\epsilon(4\widetilde{d}+2+b)}{b^{2}})$ we guarantie $|\epsilon_2|\leq \frac{|bvx|}{2}$, hence
by applying (\ref{ocenah}) to the second equation of (\ref{eqadb}) we conclude (see (\ref{polarL})):
\begin{equation}\label{estpsi1}
\psi_1=(\phi+\kappa)-(\varphi+\eta+\pi)+2l_1\pi\in (-\tfrac{\pi}{2},\tfrac{\pi}{2}), \quad |\sin \psi_1|\leq \tfrac{4b\epsilon}{b^{2}-\epsilon(\widetilde{d}+2+b)},\,\,l_1\in \mathbb{Z}.
\end{equation}
Multplying the first and the last equation of (\ref{eqadb}) and using the triangle inequality gives 
$4b^{2}|uvxy|\geq \widetilde{a}\widetilde{d}-(\widetilde{a}+\widetilde{d})\epsilon-\epsilon^2$, so $|xy|^{2}$ or $|uv|^{2}$ (or both) is at least equal to $\frac{1}{2b}(\widetilde{a}\widetilde{d}-(\widetilde{a}+\widetilde{d})\epsilon-\epsilon^2)$.
Using (\ref{ocenah}) the second inequality of (\ref{lemapsi111}) for $\sigma=-1$ implies
\begin{equation*}
\psi_2=(\varphi-\phi)-(\kappa-\eta)+2l_2\pi\in (-\tfrac{\pi}{2},\tfrac{\pi}{2}), \qquad |\sin \psi_2|\leq \tfrac{4b|\delta|}{\sqrt{\widetilde{a}\widetilde{d}-(\widetilde{a}+\widetilde{d})\epsilon-\epsilon^2}}, \,\,\, l_2\in \mathbb{Z}.
\end{equation*}
Adding it to (\ref{estpsi1}) yields $\psi_1+\psi_2=\pi+(2l_1+2l_2+1)\pi$. Applying $\sin$ finally gives
\[
1=\big|\sin (\psi_1+\psi_2)\big|\leq |\sin \psi_1|+|\sin\psi_2|\leq \tfrac{4b|\delta|}{\sqrt{\widetilde{a}\widetilde{d}-(\widetilde{a}+\widetilde{d})\epsilon-\epsilon^2}}+\tfrac{4b\epsilon}{b^{2}-\epsilon(4\widetilde{\delta}+2+b)}.
\]
It is now easy to see that for any appropriately small $\epsilon$ and $\delta$ we get a contradiction.

\item
$\widetilde{B}=\begin{bsmallmatrix}
0 & \widetilde{b}\\
\widetilde{b} & 0
\end{bsmallmatrix}$, $\widetilde{b}>0$,\quad
$B=\begin{bsmallmatrix}
a & 0\\
0 & d
\end{bsmallmatrix}$, $0< a\leq d$, \quad $\sigma=-1$ (see Lemma \ref{lemalist})\\
From (\ref{eqBF1}) for $b=0$, $\widetilde{a}=\widetilde{d}=0$ we obtain that
\begin{align}\label{eqbad}
&ax^2+du^2=\epsilon_1, \nonumber\\
&axy+duv=\widetilde{b}+\epsilon_2, \\
&ay^2+dv^2=\epsilon_4.\nonumber
\end{align}
The first and the last equation imply 
\begin{equation}\label{eqbad13}
\bigl||ax^2|-|du^2|\bigr|\leq \epsilon,\qquad \big||ay^2|-|dv^2|\big|\leq \epsilon
\end{equation}
and by adding them to the first and the last equation of (\ref{lemapsi111}) for $\sigma=-1$ multiplied with $d$ and then using the triangle inequality, we get ($0< a\leq d$):
\[
\bigl|(d-a)|x^2|-d(-1)^k\bigr|\leq d\delta+\epsilon, \qquad \bigl|(d-a)|y^2|+d (-1)^k\bigr|\leq d\delta+\epsilon.
\]
One of the left-hand sides is at least $d$, so for $\epsilon=\delta<\frac{d}{2(d+1)}$ that inequality fails.

\end{enumerate}

\item \label{p01t001t0}
$
\bigl(\begin{bsmallmatrix}
0 & 1\\
\tau & 0 
\end{bsmallmatrix},
\begin{bsmallmatrix}
\widetilde{a} & \widetilde{b}\\
\widetilde{b} & \widetilde{d}
\end{bsmallmatrix}\bigr)\dashrightarrow
\bigl(\begin{bsmallmatrix}
0 & 1\\
\tau & 0
\end{bsmallmatrix},
\begin{bsmallmatrix}
a & b\\
b & d
\end{bsmallmatrix}\bigr)
$, \quad $\tau\in [0,1)$, $\widetilde{b},b \geq 0$

It is easy to check that 
$
P(s)=
\begin{bsmallmatrix}
s & s^2\\
0 & s^{-1}
\end{bsmallmatrix}
$ and 
$
P(s)=
\begin{bsmallmatrix}
s^{-1} & 0\\
s^2 & s
\end{bsmallmatrix}
$ with $c(s)=1$ in (\ref{cPepsi}) prove
$
\big(\begin{bsmallmatrix}
0 & 1\\
\tau & 0
\end{bsmallmatrix},
\begin{bsmallmatrix}
0 & b\\
b & 0
\end{bsmallmatrix}\big)
\to 
\big(
\begin{bsmallmatrix}
0 & 1\\
\tau & 0
\end{bsmallmatrix},
\begin{bsmallmatrix}
a & b\\
b & 0
\end{bsmallmatrix}\big)$
and 
$
\big(\begin{bsmallmatrix}
0 & 1\\
\tau & 0
\end{bsmallmatrix},
\begin{bsmallmatrix}
0 & b\\
b & 0
\end{bsmallmatrix}\big)
\to 
\big(
\begin{bsmallmatrix}
0 & 1\\
\tau & 0
\end{bsmallmatrix},
\begin{bsmallmatrix}
0 & b\\
b & d
\end{bsmallmatrix}\big)$, respectively.

\quad
By Lemma \ref{lemapsi1} (\ref{lemapsi11}) for (C\ref{r6}) we have
\begin{equation}\label{1txv}
|xu|,|yu|,|vy|\leq \delta, \qquad \big||vx|-1\big|\leq \delta.
\end{equation}
Set $ux=\delta_1$, $uy=\delta_2$, $vy=\delta_4$ and after rearranging the terms we write (\ref{eqBF1}) as 
\begin{align}\label{eqBF33}
&\widetilde{a}-ax^2=du^2-\epsilon_1+2b\delta_1,\nonumber \\
&bvx-\widetilde{b}=-axy-duv+\epsilon_2-b\delta_2,\qquad |\delta_1|,|\delta_2|,|\delta_3|\leq \delta\\ 
&dv^2-\widetilde{d}=\epsilon_4-ay^2-2b\delta_4, \nonumber
\end{align}
$|\delta_1|,|\delta_2|,|\delta_3|\leq \delta$.
Applying the triangle inequality to the second equation gives
\begin{align}\label{ocenabwb}
|axy+duv|+b\delta+\varepsilon 
\geq |bvx-\widetilde{b}|
\geq \big|b|vx|-\widetilde{b}\big|
= \big|b-\widetilde{b}+b(|vx|-1)\big|\geq |b-\widetilde{b}|-b\delta.
\end{align}
Next, multiplying $|xu|\leq \delta$ and $|vy|\leq \delta$ with $|vx|\leq 1+\delta$ (see (\ref{1txv})) gives $|x^2uv|\leq \delta(1+\delta)\leq 2\delta$ and $|v^2xy|\leq \delta(1+\delta)\leq 2\delta$, respectively (recall $\delta\leq \frac{1}{2}\leq 1$). Thus either $|x^2|$ or $|v^2|$ or $|uv|$, $|xy|$ (or more of them) are bounded by $\sqrt{2\delta}$.

\begin{enumerate}[label=(\alph*),wide=0pt,itemindent=2em,itemsep=6pt]

\item \label{p01t001t0a} $|x^2|\leq \sqrt{2\delta}$\\
From (\ref{1txv}) we get $|vx|\geq 1-\delta\geq \frac{1}{2}$, $|yv|\leq \delta$ and it further yields $|xy|=\frac{|yv|\,|x|^{2}}{|xv|}\leq 2\delta\sqrt{2\delta}$.
Applying the triangle inequality to the last equation of (\ref{eqBF33}) multiplied by $x^{2}$ we obtain an inequality that fails for $d\neq 0$ and any sufficiently small $\epsilon,\delta$:
\[
\tfrac{|d|}{2}-|\widetilde{d}|\sqrt{2\delta} \leq
\big| d(vx)^{2}-\widetilde{d}x^{2} \big|=|\epsilon_4x^{2}-a(xy)^{2}-2b\delta_4x^{2}|\leq (2b\delta+\epsilon)\sqrt{2\delta}+8|a|\delta^{3}.
\]

\quad
Further, if $d=0$ the last and the first equality of (\ref{eqBF33}) yield
\[
|\widetilde{d}|\leq 2b\delta+2|a|\delta+\epsilon, \qquad \sqrt{\epsilon}+2b\sqrt{\delta}\geq \epsilon+2b\delta\geq |\widetilde{a}-ax^{2}|\geq \big| |\widetilde{a}|-|ax^{2}|  \big|\geq |\widetilde{a}|-|a|\sqrt{2\delta},
\]
respectively. For $\widetilde{a}\neq 0$ (if $\widetilde{d}\neq 0$) the second (the first) inequality fails for $\epsilon=\delta<\frac{|\widetilde{d}|}{2|a|+2b+1}$ (for $\epsilon^{2}=\delta^{2}<\frac{|\widetilde{a}|}{\sqrt{2}|a|+2b+1}$).
Since $|x|^{2}\leq \sqrt{2\delta}$, $|y|^{2}\leq 2\delta$ we have $|axy|^{2}\leq |a|^{2}\sqrt{8\delta^{3}}$, so (\ref{ocenabwb}) for $d=0$ gives $|a|\sqrt[4]{4\delta^{3}}+b\delta+\epsilon \geq |b-\widetilde{b}|-b\delta$.
When $b\neq\widetilde{b}$ it is not too difficult to choose $\delta,\epsilon$ such that the above inequality fails.
Remember that the case $d=\widetilde{a}=\widetilde{d}=0$, $b=\widetilde{b}$ has allready been considered.

\item $|v^2|\leq \sqrt{2\delta}$\\
We deal with this case in the same manner as in \ref{p01t001t0} \ref{p01t001t0a}, we only replace $x,y,v,a,d,\widetilde{a},\widetilde{d}$ by $v,u,x,d,a,\widetilde{d},\widetilde{a}$, respectively; we get similar estimates for $\delta,\epsilon$. 

\item $|uv|,|xy|\leq \sqrt{2\delta}$\\
\quad
Here the inequality (\ref{ocenabwb}) fails for $b\neq\widetilde{b}$, $\sqrt{\epsilon}=\sqrt{\delta}<\frac{|\widetilde{b}-b|}{(|a|+|d|)\sqrt{2}+2b}$. Moreover, if $b=\widetilde{b}$:
\begin{equation}\label{bewb}
\sqrt{2\delta}\big(|a|+|d|\big)+b\delta+\epsilon 
\geq b|vx-1|.
\end{equation}

\quad
Next, using (\ref{1txv}) we obtain $\delta\sqrt{2\delta}\geq\big|(xu)(vu)\big|=|u^{2}xv|\geq |u|^2(1-\delta)$ and $\delta\sqrt{2\delta}\geq\big|(xy)(vy)\big|=|y^{2}xv|\geq |y|^2(1-\delta)$. Hence $ |y|^2,|u|^2\leq \frac{\delta\sqrt{2\delta}}{1-\delta}\leq 2\delta$ (recall $\frac{1}{2}\geq \delta$), so the first and the last equation of (\ref{eqBF33}) give 
\begin{equation}\label{1tdv2}
|\widetilde{a}-ax^{2}|\leq \epsilon + 2b\delta+2|d|\delta,\quad 
\big|\widetilde{d}-dv^{2}\big|\leq \epsilon+ 2b\delta+2|a|\delta.
\end{equation}

\quad
We set $x,y,u,v$ as in (\ref{polarL}) and let $d=|d|e^{i\vartheta}$, $\widetilde{d}=|\widetilde{d}|e^{i\widetilde{\vartheta}}$, $a=e^{i\iota}$, $\widetilde{a}=e^{i\widetilde{\iota}}$. Applying  (\ref{ocenah}) to the inequality (\ref{bewb}), to the estimates (\ref{1tdv2}) and to $\big|x\overline{v}-(-1)^{k}\big|\leq \delta$, $k\in \mathbb{Z}$ (see Lemma \ref{lemapsi1} (\ref{lemapsi11}) for (C\ref{r6}) with $0<\tau <1$), then leads to:
\begin{align}\label{ocenasinpsi}
&\psi_1=\kappa+\phi+2\pi l_1, \quad & &l_1\in \mathbb{Z},\,\,|\sin \psi_2|\leq \tfrac{2}{b}\big( \sqrt{2\delta}\big(|a|+|d|\big)+b\delta+\epsilon\big),\qquad \\
&\psi_2=2\phi+\iota-\widetilde{\iota}+2\pi l_2, \quad & &l_2\in \mathbb{Z},\,\,|\sin \psi_2|\leq \tfrac{2}{|\widetilde{a}|}\big( \epsilon + 2b\delta+2|d|\delta\big),\qquad \nonumber\\
&\psi_3=2\kappa+\vartheta-\widetilde{\vartheta}+2\pi l_3, \quad & &l_3\in \mathbb{Z},\,\,|\sin \psi_3|\leq \tfrac{2}{|\widetilde{d}|}\big( \epsilon + 2b\delta+2|a|\delta\big),\qquad \nonumber\\
&\psi_4=\phi-\kappa-k\pi+2\pi l_4,  \quad &&l_4\in \mathbb{Z},\,\,|\sin \psi_4|\leq 2\delta\qquad (\textrm{ if } 0<\tau <1), \nonumber
\end{align}
respectively. Subtracting (adding) the first and the last equation from the second-one (the third-one), rearranging the terms, and applying sin, gives for $1>\tau> 0$:
\[
\iota-\widetilde{\iota}=\psi_2-\psi_1-\psi_4+2\pi (l_4+l_1-l_2)-k\pi, \quad \vartheta-\widetilde{\vartheta}=\psi_3-\psi_1+\psi_4+2\pi( l_1-l_4-l_3)+k\pi,
\]
\begin{equation}\label{iotatheta}
\big|\sin (\iota-\widetilde{\iota})\big|,\big|\sin (\vartheta-\widetilde{\vartheta})\big| \leq \sum_{j=1}^{4}|\sin \psi_j|.
\end{equation}

\begin{enumerate}[label=(\roman*),wide=0pt,itemindent=4em,itemsep=3pt]
\item $ad\widetilde{a}\widetilde{d}=0$

First, let $\widetilde{a}=a=0$ (or $\widetilde{d}=d=0$). Since $\widetilde{B}\neq B$, we have $\widetilde{d}\neq d$ (or $\widetilde{a}\neq a$). The case $\widetilde{d}=0$ (or $\widetilde{a}=0$) has allready been considered, thus it is left to check the case $0<\tau<1$ with $\widetilde{d}=e^{i\widetilde{\vartheta}}$, $d=e^{i\vartheta}$, $\vartheta,\widetilde{\vartheta}\in [0,\pi)$ (from $\widetilde{a}=e^{i\widetilde{\iota}}$, $a=e^{i\iota}$, $\iota,\widetilde{\iota}\in [0,\pi)$); see Lemma \ref{lemalist}. 
By (\ref{iotatheta}), (\ref{ocenasinpsi}) it is easy to choose suitable $\epsilon,\delta$ to get a contradiction.

\quad
For $\widetilde{a}\neq 0$, $a=0$ (for $\widetilde{d}\neq 0$, $d=0$) the first (the second) inequality of (\ref{1tdv2}) fails for $\epsilon=\delta<\frac{|\widetilde{a}|}{1+2b+2|d|}$ (for $\epsilon=\delta<\frac{|\widetilde{d}|}{1+2b+2|a|}$).

\quad
Next, let $a,d\neq 0$.
Multiplying the estimates for $|ax^2|$, $|dv^2|$ given by (\ref{1tdv2}) yields
\[
|adx^{2}v^{2}|\leq \big(|\widetilde{a}|+\epsilon + 2b\delta +2|d|\delta\big)\big(|\widetilde{d}|+\epsilon + 2b\delta +2|a|\delta\big).
\]
From (\ref{bewb}) we further deduce $|vx|\geq 1-\tfrac{1}{b}\big(\sqrt{2\delta}(|a|+|d|)+b\delta+\epsilon\big)$, so by combining it with the above inequality we obtain an inequality that fails to hold, 
provided that at least one of $\widetilde{a},\widetilde{d}$ vanishes and $\epsilon,\delta$ are small enough.

\item $a,d,\widetilde{a},\widetilde{d}\neq 0$, \qquad $\frac{|ad|}{|\widetilde{a}\widetilde{d}|}=1$ (see (\ref{pogoj}))\\
From (\ref{1tdv2}) it follows that 
\[
x^{2}=\tfrac{\widetilde{a}}{a}(1+\delta_5),\quad |\delta_5|\leq \tfrac{|a|}{|\widetilde{a}|}\big(\epsilon + 2\delta(b+|d|)\big),\qquad v^{2}=\tfrac{\widetilde{d}}{d}(1+\delta_6),\quad |\delta_6|\leq \tfrac{|d|}{|\widetilde{d}|}\big(\epsilon + 2\delta(b+|a|)\big).
\]
Using (\ref{ocenakoren}) we get
\begin{align*}
&x=(-1)^{l_1}\sqrt{\tfrac{\widetilde{a}}{a}}(1+\delta_5'),\, \;|\delta_5'|\leq |\delta_5|,\quad v=(-1)^{l_2}\sqrt{\tfrac{\widetilde{d}}{d}}(1+\delta_6'),\,\; |\delta_6'|\leq |\delta_6|,\quad l_1,l_2\in \mathbb{Z},\\
&vx=(-1)^{l_1+l_2}\sqrt{\tfrac{\widetilde{a}\widetilde{d}}{ad}}(1+\delta_5'+\delta_6'+\delta_5'\delta_6').
\end{align*}
By choosing $\epsilon,\delta$ such that $\frac{|a|}{|\widetilde{a}|}\big(\epsilon + 2\delta(b+|d|)\big)\leq 1$ we assure that $|\delta_5'|, |\delta_6'|\leq 1$ and so $|\delta_5'\delta_6'|\leq |\delta_6'|$. Further, using (\ref{bewb}) and $\frac{|ad|}{|\widetilde{a}\widetilde{d}|}=1$ yields:
\begin{align*}
\sqrt{2\delta}\big(|a|+|d|\big)+b\delta  +\epsilon 
 &\geq b|vx-1| \geq\\
&\geq b\big|(-1)^{l_1+l_2}\sqrt{\tfrac{\widetilde{a}\widetilde{d}}{ad}}-1\big|-b\max\{\tfrac{|a|}{|\widetilde{a}|},\tfrac{|d|}{|\widetilde{d}|}\}\big(3\epsilon + 2\delta(2b+2|a|+|d|)\big).
\end{align*}
If $\tau=0$, then $d=\widetilde{d}=1$, $a\neq \widetilde{a}$ (see by Lemma \ref{lemalist} for $B\neq \widetilde{B}$), so we easily find $\epsilon,\delta$ to contradict this inequality. Similarly we treat the cases $\tau\in (0,1)$ with $B=1\oplus d$, $\widetilde{B}=1\oplus \widetilde{d}$, $d\neq \widetilde{d}$ or $
B=\begin{bsmallmatrix}
e^{i\iota} & b \\
b & d
\end{bsmallmatrix}
$,
$
\widetilde{B}=\begin{bsmallmatrix}
e^{i\widetilde{\iota}} & b \\
b & \widetilde{d}
\end{bsmallmatrix}
$, where either $|d|\neq|\widetilde{d}|$ or $\iota=\widetilde{\iota}$, $d= -\widetilde{d}$.
Finally, let $\tau\in (0,1)$, $a=e^{i\iota}$, $\widetilde{a}=e^{i\widetilde{\iota}}$, $\iota,\widetilde{\iota}\in [0,\pi)$, $|d|=|\widetilde{d}|$, and let $d\neq -\widetilde{d}$ (so  $|\vartheta-\widetilde{\vartheta}|\neq \pi$) precisely when $\iota=\widetilde{\iota}$.
From (\ref{iotatheta}), (\ref{ocenasinpsi}) we get a contradiction for any small $\epsilon,\delta$. 

\end{enumerate}

\end{enumerate}

\item \label{p1i}
$
\big(
\begin{bsmallmatrix}
0 & 1\\
1 & i
\end{bsmallmatrix},
\widetilde{B}\big)\dashrightarrow
\big(\begin{bsmallmatrix}
0 & 1\\
1 & i
\end{bsmallmatrix},
B\big)
$\\
Lemma \ref{lemapsi1} (\ref{lemapsi11}) with (C\ref{r5}) for $\alpha=0$, $\beta=1$, $\omega=i$, $k=0$ (since $\delta\leq \frac{1}{2}<1$) gives
\begin{align}\label{lemapsi11i}
\big|\overline{x}v+\overline{u}y-1\big|\leq \delta ,\quad |u|^2\leq \delta, \quad \big||v|^{2}-1\big|\leq \delta.
\end{align}

\begin{enumerate}[label=(\alph*),wide=0pt,itemindent=2em,itemsep=6pt]
\item 
$\widetilde{B}=\begin{bsmallmatrix}
0 & \widetilde{b}\\
\widetilde{b} & 0
\end{bsmallmatrix}$, $\widetilde{b}>0$, \quad
$B=a\oplus d
$, $a>0$, $d\in \mathbb{C}^{*}$\\
From (\ref{eqBF1}) we obtain the same equations as in (\ref{eqbad}) and the inequality (\ref{eqbad13}) but with $d\in\mathbb{C}^{*}$. Applying the triangle inequality to these (in)equalities, and 
using (\ref{lemapsi11i}) gives $|ax^2|\leq |d|\delta+\epsilon$, $|ay^2|\leq |d v^2|+\epsilon \leq |d|(1+\delta)+\epsilon$ and finally 
\begin{align*}
|\widetilde{b}|-\epsilon\leq|\widetilde{b}+\epsilon_2|=|axy+duv|\leq \sqrt{\big(|d|\delta+\epsilon\big)\big(|d|(1+\delta)+\epsilon\big)}+|d|\sqrt{\delta(1+\delta)}.
\end{align*}
It is straightforward to find $\delta,\epsilon$ such that that the last inequality fails.

\item 
$\widetilde{B}=\widetilde{a}\oplus \widetilde{d}
$, $\widetilde{a}>0$, $\widetilde{d}\in \mathbb{C}^{*}$,\quad  $B=\begin{bsmallmatrix}
0 & b\\
b & 0
\end{bsmallmatrix}$, $b>0$

We have the same equations as are (\ref{eqadb}), but with $\widetilde{d}\in \mathbb{C}^{*}$. 
From the first and the last of these equations we obtain the estimates $2b|xu|\geq \widetilde{a}-\epsilon$ and $2b|y|(1-\delta)\leq |\widetilde{d}|-\epsilon$ ($|v|^2\geq 1-\delta$ by (\ref{lemapsi11i})), respectively. 
Combining them and $|u|^{2}\leq \delta$, $|v|^{2}\geq 1-\delta$ with the second equation of (\ref{eqadb}) multiplied by $u$, yields
\[
\epsilon\sqrt{\delta}+\delta\tfrac{|\widetilde{d}|-\epsilon}{2(1-\delta)}\geq |bu^{2}y-\epsilon_2u|= |b(xu)v|\geq \tfrac{\widetilde{a}-\epsilon}{2}\sqrt{1-\delta}.
\]
Clearly, for any small enough $\epsilon,\delta$ we get a contradiction.

\item 
$\widetilde{B}=\widetilde{a}\oplus \widetilde{d}
$,\quad
$B=a\oplus d
$, \qquad $\widetilde{a},a\geq 0$, $\widetilde{d},d\in \mathbb{C}$, $a|d|=\widetilde{a}|\widetilde{d}|$ (see (\ref{pogoj}))\\
By slightly rearranging the terms in (\ref{eqBF1}) for $b=\widetilde{b}=0$, $\widetilde{d},d\in \mathbb{C}$ (see also (\ref{eqBFadad})), using the last two estimates in (\ref{lemapsi11i}) and applying the triangle inequality, we get
%
%
%
\begin{align}\label{ocena3adad}
\big|a|x|^2-\widetilde{a}\big| &\leq |ax^2-\widetilde{a}|=|\epsilon_1-du^2|\leq  \epsilon+|d|\delta,\\
|axy|^{2}         &=|\epsilon_2-duv|^2\leq \big(|d|\sqrt{\delta(1+\delta)}+\epsilon\big)^{2},\nonumber\\
\epsilon+ |ay^2|  &\geq|\epsilon_3-ay^2|= |dv^2-\widetilde{d}|\geq \big| |d|\,|v|^2-|\widetilde{d}|\big|= \big | |d|-|\widetilde{d}|+|d|(|v|^{2}-1)\big|\geq \nonumber \\
              &\geq \big||d|-|\widetilde{d}|\big|-|d|\delta\nonumber.
\end{align}
On the other hand the third equation of (\ref{eqBFadad}) gives $a|y|^{2}=|dv^2-\widetilde{d}+\epsilon|\leq |d|(1+\delta)+|\widetilde{d}|+\epsilon$.
Using this and the first two estimates of (\ref{lemapsi11i}) we conclude that
\begin{equation}\label{an0}
\delta+\sqrt{\delta}\sqrt{\tfrac{1}{a}\big(|d|(1+\delta)+|\widetilde{d}|+\epsilon\big)}\geq \delta+|\overline{u}y|\geq \big|\overline{x}v-1\big|\geq \big||vx|-1   \big|, \qquad  a\neq 0.
\end{equation}

\quad
Note that according to the list in Lemma \ref{lemalist}, $a=0$ ($\widetilde{a}=0$) implies $d\geq 0$ ($\widetilde{d}\geq 0$).

\begin{enumerate}[label=(\roman*),wide=0pt,itemindent=4em,itemsep=3pt]

\item $a=0$ or $\widetilde{a}=0$\\
If $a=0$, $\widetilde{a}\neq 0$ the first inequality of (\ref{ocena3adad}) fails for $\epsilon=\delta<\frac{\widetilde{a}}{1+|d|}$, while for $a=\widetilde{a}=0$ (hence $d,\widetilde{d}\geq 0$, $d\neq \widetilde{d}$) with $\epsilon=\delta<\frac{|d-\widetilde{d}|}{1+|d|}$ the third inequality of (\ref{ocena3adad}) fails. If $\widetilde{a}=0$, $a\neq 0$, then the first inequality of (\ref{ocena3adad}) yields $|x^{2}|\leq \frac{|d|\delta+\epsilon}{a}$. Since $|v|^{2}\leq 1-\delta$, it is now not to difficult to contradict (\ref{an0}) for any sufficiently  small $\epsilon,\delta$.

\item $a,\widetilde{a},d,\widetilde{d}\neq 0$\\   
Inequalities (\ref{ocena3adad}) yield $ |ax^{2}|\geq\widetilde{a}-\epsilon-|d|\delta$, $|ay|^{2}\leq \tfrac{\big(|d|\sqrt{\delta(1+\delta)}+\epsilon\big)^{2}}{\widetilde{a}-\epsilon-|d|\delta}$ and
\[
\epsilon+ \tfrac{\big(|d|\sqrt{\delta(1+\delta)}+\epsilon\big)^{2}}{\widetilde{a}-\epsilon-|d|\delta}\geq \big||d|-|\widetilde{d}|\big|-|d|\delta.
\]
If $|d|\neq |\widetilde{d}|$ it is straightforward to obtain a contradiction for any $\epsilon,\delta$ small enough. 

\quad
If $|d|=|\widetilde{d}|$, then $a|d|=\widetilde{a}|\widetilde{d}|$ yields $a=\widetilde{a}$, $d=|d|e^{i\vartheta}$, $\widetilde{d}=|d|e^{i\widetilde{\vartheta}}$ with $\widetilde{\vartheta}-\vartheta\neq 2l\pi$, $l\in \mathbb{Z}$.
The right-hand (left-hand) side of the first (third) inequality of (\ref{ocena3adad}) thus gives 
\[
x^{2}=
1+\delta_5,\quad |\delta_5|\leq \tfrac{1}{\widetilde{a}}(\epsilon + \delta|d|),\qquad v^{2}=e^{i(\widetilde{\vartheta}-\vartheta)}(1+\delta_6),\quad |\delta_6|\leq \tfrac{(|d|\sqrt{\delta(1+\delta)}+\epsilon)^{2}}{|\widetilde{d}|(\widetilde{a}-\epsilon-|d|\delta)}.
\]
If we choose $\epsilon,\delta$ such that $|\delta_5'|, |\delta_6'|\leq 1$, then using (\ref{ocenakoren}) it follows that
\small
\[
x=(-1)^{l_1}
(1+\delta_5'),\,\, |\delta_5'|\leq \tfrac{1}{\widetilde{a}}(\epsilon + \delta|d|),\quad 
v=(-1)^{l_2}e^{i\frac{\widetilde{\vartheta}-\vartheta}{2}}(1+\delta_6'),\,\, |\delta_6'|\leq \tfrac{(|d|\sqrt{\delta(1+\delta)}+\epsilon)^{2}}{|\widetilde{d}|(\widetilde{a}-\epsilon-|d|\delta)},
\]
\normalsize
where $l_1,l_2\in \mathbb{Z}$. Hence 
%
$v\overline{x}=(-1)^{l_1+l_2}e^{i\frac{\widetilde{\vartheta}-\vartheta}{2}}(1+\overline{\delta}_5')(1+\delta_6')$
%
and further
\begin{equation*}
\big|v\overline{x}-(-1)^{l_1+l_2}e^{i\frac{\widetilde{\vartheta}-\vartheta}{2}}\big|\leq \overline{\delta}_5'+\delta_6'+\overline{\delta}_5'\delta_6'.
\end{equation*}
Since $\frac{\widetilde{\vartheta}-\vartheta}{2} \neq l'\pi$, $l'\in \mathbb{Z}$, then combining the above inequality with (\ref{an0}) and using the triangle inequality yields a contradiction for every small $\epsilon,\delta$.

\item $d=\widetilde{d}= 0$, \; $a,\widetilde{a}\neq 0$,\; $a\neq \widetilde{a}$ (since $B\neq\widetilde{B}$)\\
%
%
The first equation of (\ref{eqBFadad}) and the last inequality of (\ref{lemapsi11i}) give $|x|^{2}=\frac{\widetilde{a}}{a}(1+\delta_1)$, $|\delta_1|\leq \frac{a}{\widetilde{a}}\epsilon$ and $|v|^{2}=1+\delta_2$, $|\delta_2|\leq \delta$, respectively. Further,
(\ref{ocenakoren}) leads to $|x|=\sqrt{\frac{\widetilde{a}}{a}}(1+\delta_1')$, $|\delta_1'|\leq \frac{a}{\widetilde{a}}\epsilon$ and $|v|=1+\delta_2'$, $|\delta_2'|\leq \delta \leq \frac{1}{2}$. Therefore we obtain $\big||xv|-\sqrt{\frac{\widetilde{a}}{a}}\big|\leq \sqrt{\frac{\widetilde{a}}{a}}\delta+\frac{3a}{2\widetilde{a}}\epsilon+\delta$. From (\ref{an0}) for $d=\widetilde{d}=0$ and applying the triangle inequality we deduce
\[
\big|1-\sqrt{\tfrac{\widetilde{a}}{a}}\big|\leq \big(\sqrt{\tfrac{\widetilde{a}}{a}}\delta+\tfrac{3a}{2\widetilde{a}}\epsilon+\delta\big)+\big(\delta+\sqrt{\tfrac{\epsilon\delta}{a}}\big), \textrm{ which failes for  }
\epsilon=\delta<\tfrac{\big|1-\sqrt{\frac{\widetilde{a}}{a}}\big|}{\sqrt{\frac{\widetilde{a}}{a}}+\frac{3a}{2\widetilde{a}}+2+\sqrt{\frac{1}{a}}}.
\]

\end{enumerate}

\end{enumerate}


\item \label{p11-101i}
$
(1\oplus -1,\widetilde{B})
\dashrightarrow
\big(\begin{bsmallmatrix}
0 & 1\\
1 & i
\end{bsmallmatrix},
B\big)$

From Lemma \ref{lemapsi1} (\ref{lemapsi11}) with (C\ref{r5}) for $-\omega=\alpha=1$, $\beta=0$ we obtain
\begin{equation}\label{ocena4adad}
\big|2\Rea(\overline{x}u)-(-1)^{k}\big|\leq \delta, \quad \big|2\Rea(\overline{y}v)+(-1)^k\big|\leq \delta, \, k\in \mathbb{Z}, \quad |u|^{2},|v|^2\leq \delta
\end{equation}

\begin{enumerate}[label=(\alph*),wide=0pt,itemindent=2em,itemsep=6pt]

\item
$B=a\oplus d
$, \quad $a\geq 0$

\begin{enumerate}[label=(\roman*),wide=0pt,itemindent=4em,itemsep=3pt]
\item
$\widetilde{B}=\widetilde{a}\oplus \widetilde{d}
$, \quad $\widetilde{a},\widetilde{d}\geq 0$\\
For $c(s)=1$, $P(s)=\begin{bsmallmatrix}
\frac{1}{2}s^{-1} & -\frac{1}{2}s^{-1}\\
s & s
\end{bsmallmatrix}$ in (\ref{cPepsi}) we obtain 
$(1\oplus -1,0_2)\to\big(\begin{bsmallmatrix}
0 & 1\\
1 & i
\end{bsmallmatrix},0 \oplus d\big)$, $d> 0$.

\quad
We have the same equations as in (\ref{eqBFadad}).
Since $|u|^{2},|v|^{2}\leq \delta$ (see (\ref{ocena4adad})) we deduce that the first (third) of these equation fails for $a=0$, $\widetilde{a}\neq 0$, $\epsilon=\delta<\frac{\widetilde{a}}{|d|+1}$ ($a=0$, $\widetilde{d}\neq 0$, $\epsilon=\delta<\frac{\widetilde{d}}{|d|+1}$).
Next, $d=0$, $a\neq 0$ yields $x^2=\frac{\widetilde{a}+\epsilon_1}{a}$, hence $(1-\delta)^{2}\leq |2\Rea (\overline{x}u)|^{2}\leq |2xu|^{2}\leq 4\frac{\widetilde{a}+\epsilon}{a}\delta$, which fails for any $\delta,\epsilon$ chosen sufficiently small. Finally, for $a,d\neq 0$ (hence $\widetilde{a},\widetilde{d}\neq 0$) the first and the last equation of (\ref{eqBFadad}) (with $|u|^{2},|v|^{2}\leq \delta$) give $|ax^2|\geq \widetilde{a}-|d| \delta-\epsilon $, $|ay^2|\geq \widetilde{d}-|d| \delta-\epsilon $, respectively. Provided that $\widetilde{a},\widetilde{d}\geq \epsilon+|d|\delta$ we multiply these inequalities and a comparison to the second equation of (\ref{eqBFadad}) implies:
\[
\big( \widetilde{a}-|d| \delta-\epsilon\big)\big(\widetilde{d}-|d| \delta-\epsilon \big)\leq |axy|^{2}=|\epsilon_2-duv|^{2}\leq \big(\epsilon+|d|\delta\big)^{2},
\]
which fails to hold for $\epsilon=\delta<\tfrac{\widetilde{a}\,\widetilde{d}}{(|d|+1)(\widetilde{a}+\widetilde{d})}$.

\item
$\widetilde{B}=\begin{bsmallmatrix}
0 & \widetilde{b}\\
\widetilde{b} & 0
\end{bsmallmatrix}$, $\widetilde{b}>0$\\ 
We have equations (\ref{eqbad}). Its first and its last equation yield $|ax^2|,|ay^2|\leq |d| \delta+\epsilon $, what further with the second equation and $|u|^{2},|v|^{2}\leq \delta$ (see (\ref{ocena4adad})) gives
\[
\widetilde{b}-\epsilon\leq |axy+duv|\leq |axy|+|duv|\leq \big(|d| \delta+\epsilon\big) +|d|\delta.
\]
Since $\widetilde{b}> 0$ this inequality fails for $\epsilon=\delta<\frac{\widetilde{b}}{2(|d|+1)}$. 

\end{enumerate}

\item
$
B=\begin{bsmallmatrix}
0 & b\\
b & 0
\end{bsmallmatrix}
$, $b>0$

\begin{enumerate}[label=(\roman*),wide=0pt,itemindent=4em,itemsep=3pt]
\item
$\widetilde{B}=\begin{bsmallmatrix}
0 & \widetilde{b}\\
\widetilde{b} & 0
\end{bsmallmatrix}$, $\widetilde{b}> 0$\\
The first equation of (\ref{eqBF1}) for $a=d=\widetilde{a}=0$ is $2bux=\epsilon_1$, which implies $|2\Rea \overline{x}u|\leq \frac{\epsilon}{b}$. 
For $\epsilon=\delta <\frac{b}{b+1}$ the first inequality of (\ref{ocena4adad}) then fails to hold.

\item
$\widetilde{B}=\widetilde{a}\oplus \widetilde{d}
$, \quad $\widetilde{a},\widetilde{d}>0$, \qquad $\widetilde{a}\widetilde{d}=b^{2}$ (see (\ref{pogoj}))\\
The first and the last equation of (\ref{eqadb}) give $|2\overline{x}u|\leq \tfrac{\widetilde{a}+\epsilon}{b}$, $|2\overline{y}v|\leq \tfrac{\widetilde{d}+\epsilon}{b}$.
Combined with the first two estimates in (\ref{ocena4adad}), it leads to
\begin{align*}
1-\delta\leq \big|2\Rea(\overline{x}u)\big|\leq |2\overline{x}u|\leq \tfrac{\widetilde{a}+\epsilon}{b}, \qquad 1-\delta\leq \big|2\Rea(\overline{y}v)\big|\leq |2\overline{y}v|\leq \tfrac{\widetilde{d}+\epsilon}{b}.
\end{align*}
Since $\widetilde{a}\widetilde{d}=b^{2}$ then for $\widetilde{a}\neq \widetilde{d}$ we either have $\frac{\widetilde{a}}{b}<1$ or $\frac{\widetilde{d}}{b}<1$. In both cases one of the above inequalities fails for $\epsilon=\delta<\frac{b-\min\{\widetilde{a},\widetilde{d}\}}{b+1}$.
Moreover, when $\widetilde{a}= \widetilde{d}=b$, then $c(s)=-1$, $P(s)=\frac{1}{\sqrt{2}}\begin{bsmallmatrix}
i s^{-1} & s^{-1}\\
-i s & s
\end{bsmallmatrix}$ in (\ref{cPepsi}) proves 
$(1\oplus -1,bI_2)\to\big(\begin{bsmallmatrix}
0 & 1\\
1 & i
\end{bsmallmatrix},\begin{bsmallmatrix}
0 & b\\
b & 0
\end{bsmallmatrix}\big)$.

\end{enumerate}

\end{enumerate}

\item \label{p011001i}
$
\big(\begin{bsmallmatrix}
0 & 1\\
1 & 0
\end{bsmallmatrix},
\widetilde{B}\big)
\dashrightarrow
\big(\begin{bsmallmatrix}
0 & 1\\
1 & i
\end{bsmallmatrix},
B\big)
$

Lemma \ref{lemapsi1} (\ref{lemapsi11}) with (C\ref{r7}) for $\alpha=\omega=0$, $\beta=0$ yields
\begin{equation}\label{ocena2psi01}
|u|^{2},|v|^{2}\leq \delta,\quad \big|2\Rea(\overline{y}v)\big|\leq \delta, \quad \big|2\Rea(\overline{x}u)\big|\leq \delta, \quad \big|\overline{x}v+\overline{u}y-(-1)^{k}\big|\leq \delta,\,k\in \mathbb{Z}. 
\end{equation}

\begin{enumerate}[label=(\alph*),wide=0pt,itemindent=2em,itemsep=6pt]
\item
$
B=a\oplus d
$, $a\geq 0$  
\\ 
Setting $u^{2}=\delta_1$, $v^{2}=\delta_2$ (with $|\delta_1|,|\delta_2|\leq \delta$), and rearranging the terms 
in the first and the last equation of (\ref{eqBF1}) for $b=0$, gives
\begin{align*}
ax^{2}=\widetilde{a}+\epsilon_1-d\delta_1,\qquad 
ay^2=\widetilde{d}+\epsilon_4-d\delta_2.
\end{align*}
Since $\widetilde{B}\neq 0$ constants $\widetilde{a},\widetilde{d}$ do not both vanish (see Lemma \ref{lemalist}). For $a=0$ 
at least one of the above equations fails, provided that $\epsilon=\delta<\frac{\max\{|\widetilde{a}|,|\widetilde{d}|\}}{|d|+1}$.
Further, if $a> 0$ we obtain 
$|x|^{2}\leq \frac{1}{a}\big( |\widetilde{a}|+\epsilon+d\delta \big)$ and $|y|^{2}\leq \frac{1}{a}\big(|\widetilde{d}|+\epsilon+d\delta  \big)$, 
and using (\ref{ocena2psi01}) we get 
\[
\sqrt{\tfrac{\delta}{a}}\big(\sqrt{|\widetilde{a}|+\epsilon+d\delta}+\sqrt{|\widetilde{d}|+\epsilon+d\delta}\big)\geq |\overline{x}v+\overline{u}y|\geq 1-\delta.
\]
It is not difficult to contradict this inequality for any sufficiently small $\epsilon,\delta$. 

\item \label{ha1}
$
B=\begin{bsmallmatrix}
0 & b\\
b & 0
\end{bsmallmatrix}$, $b>0$

\begin{enumerate}[label=(\roman*),wide=0pt,itemindent=4em,itemsep=3pt]
\item \label{hi1}
$
\widetilde{B}=\begin{bsmallmatrix}
0 & \widetilde{b}\\
\widetilde{b} & 1
\end{bsmallmatrix}
$, $\widetilde{b}>0$\\ 
Since $1=\frac{|\det \widetilde{A}|}{|\det A|}=\frac{|\det \widetilde{B}|}{|\det B|}=\frac{\widetilde{b}^{2}}{b^{2}}$ (see (\ref{pogoj})), we have  (\ref{eqBF1}) for $a=d=\widetilde{a}=0$, $\widetilde{d}=1$, $\widetilde{b}=b$:
\begin{align}\label{eqabdb}
&2bux=\epsilon_1,\nonumber \\
&bvx+buy=b+\epsilon_2,\\ 
&2byv=1+\epsilon_4. \nonumber
\end{align}
Furthermore, Lemma \ref{lemadet} (\ref{lemadetb}) gives $\det P=vx-uy=(-1)^{l}+\delta'$, $l\in \mathbb{Z}$, with $|\delta'|\leq \frac{\epsilon}{b^{2}}(4\max\{1,b\}+2)$. By combining it with the second equation of (\ref{eqabdb}) we obtain:
\begin{equation}\label{vx1uy}
vx=\tfrac{1}{2}\big(\tfrac{b+\epsilon_2}{b}+(-1)^{l}+\delta'\big), \quad uy=\tfrac{1}{2}\big(\tfrac{b+\epsilon_2}{b}-(-1)^{l}-\delta'\big), \qquad l\in \mathbb{Z}.
\end{equation}
If $\epsilon,\delta$ are small, then for $l$ even (odd), $uy$ is close to $0$ (close to $1$) and $vx$ is close to $1$ (close to $0$), and hence $\overline{x}v$ (or $\overline{u}y$) is close to $(-1)^{k}$, $k\in \mathbb{Z}$ by (\ref{ocena2psi01}).  
Using the notation (\ref{polarL}) for $x,y,u,v$ we apply (\ref{ocenah}) to the third equation of (\ref{eqabdb}), to (\ref{vx1uy}) and to the last estimate of (\ref{ocena2psi01}), provided that $\frac{\epsilon}{b}+|\delta'|+\delta \leq \frac{1}{2}$. We deduce that
\[
\psi=\varphi+\kappa, \quad
\psi_l=\left\{
\begin{array}{ll}\phi+\kappa, & l=1\\
                 \varphi+\eta, & l=0
 \end{array}, \quad               
\right.
\psi_l'=\left\{\begin{array}{ll}\kappa-\phi-k\pi, & l=1\\
              \varphi-\eta-k\pi, & l=0
\end{array}
\right.
\]\\
(so $\varphi-\kappa=(-1)^{l}(k\pi+\psi_l+\psi_l'-\psi)$), where
$|\sin \psi|\leq 2\epsilon $, 
$|\sin \psi_l|\leq \frac{2\epsilon}{b^{2}}(b+4\max\{1,b\}+2)$, $|\sin \psi_l'|\leq \frac{2\epsilon}{b^{2}}(b+4\max\{1,b\}+2)+2\delta$, $l\in \{0,1\}$.
Therefore using 
$|\sin (\varphi-\kappa)|\leq |\sin \psi_l'|+|\sin\psi_l|+|\sin \psi|$ together with the squared second estimate of (\ref{ocena2psi01}) 
and the third equation of (\ref{eqabdb}) we get
\begin{align*}
\delta^{2}&\geq\big|2\Rea (\overline{v}y)\big|^{2}=4|vy|^{2}\big|\cos (\varphi-\kappa)\big|^{2}
\geq (\tfrac{1-\epsilon}{2b})^{2}\big(1-|\sin (\varphi-\kappa) |^{2}\big)\\
&\geq (\tfrac{1-\epsilon}{2b})^{2}\Big(1-4\big|\epsilon+\tfrac{\epsilon}{b^{2}}(b+4\max\{1,b\}+2)+\delta \big|^{2}\Big).
\end{align*}
It is straightforward to choose $\epsilon,\delta$ small enough so that this inequality fails.

\item \label{c6bii}
$
\widetilde{B}=1\oplus \widetilde{d}
$, \quad $\Ima \widetilde{d} >0$ ($\widetilde{d}=|\widetilde{d}|e^{i\widetilde{\vartheta}}$, $0<\widetilde{\vartheta}<\pi$)\\
From (\ref{eqBF1}) for $a=d=0$ and $\widetilde{b}=0$, $\widetilde{a}=1$ we get
\begin{align}\label{eqBF12}
&2bux=1+\epsilon_1,\nonumber \\
&bvx+buy=\epsilon_2,\\ 
&2byv=\widetilde{d}+\epsilon_4. \nonumber
\end{align}
As $1=\frac{|\det \widetilde{A}|}{|\det A|}=\frac{|\det \widetilde{B}|}{|\det B|}=\frac{|\widetilde{d}|}{b^{2}}$, Lemma \ref{lemadet} gives $\det P=vx-uy=(-1)^{l}ie^{i\frac{\widetilde{\vartheta}}{2}}+\delta'$, $l\in \mathbb{Z}$, $|\delta'|\leq \frac{\epsilon(4\max\{1,|\widetilde{d}|\}+2)}{b\sqrt{|\widetilde{d}|}}$. Adding (subtracting) it from second equation of (\ref{eqBF12}) yields
\[
vx=\tfrac{1}{2}\big(\tfrac{\epsilon_2}{b}+(-1)^{l}ie^{i\frac{\widetilde{\vartheta}}{2}}+\delta'\big), \quad 
uy=\tfrac{1}{2}\big(\tfrac{\epsilon_2}{b}-(-1)^{l}ie^{i\frac{\widetilde{\vartheta}}{2}}-\delta'\big).
\]
We combine this with the last estimate in (\ref{ocena2psi01}) and write it in the notation (\ref{polarL}):
\begin{align*}
(-1)^{k}+\delta_1=\overline{x}v+\overline{u}y  & =e^{-2i\phi}(xv+yu)+e^{-2i\phi}(e^{2i(\phi-\eta)}-1)yu\\
&=e^{-2i\phi}\tfrac{\epsilon_2}{b}+2i\sin (\phi-\eta)e^{-i(\phi+\eta)}\tfrac{1}{2}\big(\tfrac{\epsilon_2}{b}-(-1)^{l}ie^{i\frac{\widetilde{\vartheta}}{2}}-\delta'\big),
\end{align*}
%
where $|\delta_1|\leq \delta$.
Using (\ref{ocenah}) we deduce
$\psi=\frac{\widetilde{\vartheta}}{2}-\eta-\phi+l\pi-k\pi+2\pi m$, $m\in \mathbb{Z}$, $|\sin \psi|\leq 2\big(\frac{2\epsilon}{b}+\delta+|\delta'|\big)$,
while the first equation of (\ref{eqBF12}) gives $\psi'=\phi+\varphi+2\pi m'$, $|\sin \psi'|\leq 2\epsilon$. Thus $\psi'+\psi=\frac{\widetilde{\vartheta}}{2}+n\pi$, $n\in \mathbb{Z}$ and by applying sin we conclude
\[
|\sin \tfrac{\widetilde{\vartheta}}{2}|\leq |\sin \psi|+|\sin \psi'|\leq  2\epsilon+2\big(\tfrac{2\epsilon}{b}+\delta+\tfrac{\epsilon(4\max\{1,|\widetilde{d}|\}+2)}{b\sqrt{|\widetilde{d}|}}\big),
\]
which fails for $\epsilon=\delta<|\sin \frac{\widetilde{\vartheta}}{2}| \big(4+\frac{4}{b}+\tfrac{2(4\max\{1,|\widetilde{d}|\}+2)}{b\sqrt{|\widetilde{d}|}}\big)^{-1}$ (remember $0< \vartheta<\pi$).

\end{enumerate}

\end{enumerate}

\item \label{p1-1v0110}
$
(1\oplus -1
,\widetilde{B})
\dashrightarrow
\big(\begin{bsmallmatrix}
0 & 1\\
1 & 0
\end{bsmallmatrix},
B\big)
$

Lemma \ref{lemapsi1} (\ref{lemapsi11}) with (C\ref{r7}) for $\alpha=-\omega=1$, $\beta=0$ gives
\begin{equation}\label{eqBF8}
2\Rea (\overline{x}u)=(-1)^{k}+\delta_1, \quad 2\Rea (\overline{y}v)=-(-1)^{k}+\delta_4, \quad \overline{x}v+\overline{u}y=\delta_2, \qquad k\in \mathbb{Z},
\end{equation}
with $|\delta_1|,|\delta_2|,|\delta_4|\leq \delta$.

\begin{enumerate}[label=(\alph*),wide=0pt,itemindent=2em,itemsep=6pt]
\item $\det B=\det \widetilde{B}=0$

First, consider
$(1\oplus -1, 0\oplus \widetilde{d}) \dashrightarrow
\big(\begin{bsmallmatrix}
0 & 1\\
1 & 0
\end{bsmallmatrix},
1\oplus 0 \big)
$, $\widetilde{d}>0$.
By applying (\ref{ocenakoren}) to the first and the last equation of
(\ref{eqBF1}) for $a=1$, $b=d=\widetilde{a}=0$ we get
%
\begin{equation}\label{enakyu}
x=(-1)^{l_1}\sqrt{\epsilon_1},\quad y=(-1)^{l_4}\big(\sqrt{\widetilde{d}}+\epsilon_4'\big),\quad l_1,l_4\in \mathbb{Z},\; |\epsilon_4'|\leq \tfrac{\epsilon_4}{\sqrt{|\widetilde{d}|}}
\end{equation}
respectively.
Next, by manipulation of the third expression of (\ref{eqBF8}) we deduce
\begin{align*}
\delta_2=\overline{x}v+\overline{u}y  & =e^{-2i\phi}(xv-yu)+e^{-2i\phi}(e^{2i(\phi-\eta)}+1)yu\\
&=e^{-2i\phi}\det P +2\cos (\phi-\eta)e^{-i(\phi+\eta)}uy
\end{align*}
By multiply it with $x$, rearranging the terms and slightly simplify leads to 
\[
\delta_2x-e^{-2i\phi}x\det P= 
 2\cos (\phi-\eta)|ux|\,y =2y\Rea (\overline{x}u) 
\]
To conclude, Lemma \ref{lemadet} (\ref{lemadeta}) gives
%
$|\det P|=1+\delta'$, $|\delta'|\leq 6\|E\|\leq  \tfrac{6\delta}{\nu}$ with $ \nu >0$,
%
and combining it with the first equation of (\ref{eqBF8})  and (\ref{enakyu}) eventually contradicts the above equation for any sufficiently small $\epsilon,\delta$.

%
%
 
\quad
For $c(s)=1$, $P(s)=\frac{1}{2}\begin{bsmallmatrix}
2\sqrt{1+s}-2 & 2-2\sqrt{1+s}\\
1+\sqrt{1+s} & 1+\sqrt{1+s}
\end{bsmallmatrix}$ in (\ref{cPepsi})   
we get
$(1\oplus -1,0_2)\to
\big(\begin{bsmallmatrix}
0 & 1\\
1 & 0
\end{bsmallmatrix},1 \oplus 0\big)$.

\item 
$
B=\begin{bsmallmatrix}
0 & b\\
b & 1
\end{bsmallmatrix}
$, $b>0$

From (\ref{eqBF1}) for $a=0$, $d=1$ we obtain
\begin{align}\label{eqBF5}
&2bux+u^2=\widetilde{a}+\epsilon_1\nonumber \\
&bvx+buy+uv=\widetilde{b}+\epsilon_2\\ 
&v^2+2byv=\widetilde{d}+\epsilon_4. \nonumber
\end{align}

Multiplying $\det P=vx-uy$ with $b$ and then adding and subtracting it from the second equation of (\ref{eqBF5}) yields
\[
2bvx+uv=b\det P+\widetilde{b}+\epsilon_2, \qquad 
2buy+uv=\widetilde{b}+\epsilon_2-b\det P,
\]
respectively.
Multiplying the first (the second) equation by $u$ (by $v$) and comparing it with the first (the last) equation of (\ref{eqBF5}), multiplied by $v$ ($u$), gives 
\[
u(b\det P+\widetilde{b}+\epsilon_2)=v(\widetilde{a}+\epsilon_4), \qquad v(-b\det P+\widetilde{b}+\epsilon_2)=u(\widetilde{d}+\epsilon_4).
\]
For $v,u\neq 0$ it follows that
\begin{equation}\label{vdivu}
\tfrac{u}{v}=\tfrac{\widetilde{b}-b\det P+\epsilon_2}{\widetilde{d}+\epsilon_4}=\tfrac{\widetilde{a}+\epsilon_4}{b\det P+\widetilde{b}+\epsilon_2}.
\end{equation}
%

\begin{enumerate}[label=(\roman*),wide=0pt,itemindent=4em,itemsep=3pt]
\item 
$\widetilde{B}=\begin{bsmallmatrix}
0 & \widetilde{b}\\
\widetilde{b} & 0
\end{bsmallmatrix}$, $\widetilde{b}>0$

The first and the last equation of (\ref{eqBF5}) for $\widetilde{a}=\widetilde{d}=0$ immediately imply that $\big|2b|ux|-|u|^2\big|\leq \epsilon$ and $\big|2b|vy|-|v|^2\big|\leq \epsilon$.
Further, (\ref{eqBF8}) gives that $2|xu|,2|yv|\geq 1-\delta$ and hence $|u|^{2},|v|^{2}\geq b(1-\delta)-\epsilon$. For $\epsilon<\frac{b}{2}$ we have $u,v\neq 0$ (since $\delta\leq \frac{1}{2}$), so the first and the last equation of (\ref{eqBF5}) yield $2bx=-u+\frac{\epsilon_1}{u}$ and  
$2by=-v+\frac{\epsilon_1}{v}$.
Therefore 
\[
\overline{x}u=\tfrac{1}{2b}(-\overline{u}+\tfrac{\overline{\epsilon_1}}{\overline{u}})u=\tfrac{1}{2b}(-|u|^{2}+\tfrac{\overline{\epsilon_1}u}{\overline{u}}), \quad 
\overline{y}v=\tfrac{1}{2b}(-\overline{v}+\tfrac{\overline{\epsilon_1}}{\overline{v}})v=\tfrac{1}{2b}(-|v|^{2}+\tfrac{\overline{\epsilon_4}v}{\overline{v}}).
\]
Adding the real parts of these equalities, applying the triangle inequality and using the first two equations of (\ref{eqBF8}) with the lower estimates on $|u|^{2}$, $|v|^{2}$ gives
\[
2\delta\geq 2\big|\Rea (\overline{x}u)+\Rea (\overline{y}v)\big|=\big|\tfrac{1}{b}(|u|^2+|v|^2)-\tfrac{1}{b}\Rea(\tfrac{\overline{\epsilon_1}u}{\overline{u}}+\tfrac{\overline{\epsilon_4}v}{\overline{v}})\big|\geq 2(1-\delta)-\tfrac{4\epsilon}{b},
\]
which fails for $\epsilon=\delta<\frac{2b}{4b+4}$.

\item
$\widetilde{B}=\widetilde{a}\oplus \widetilde{d}
$, \quad
$0<\widetilde{a}\leq\widetilde{d}$

If $uv=0$, then (\ref{eqBF5}) fails for $\epsilon<\widetilde{a}$.
Next, for $v\neq 0$
%
we easily validate the following calculation
\begin{align}\label{calcxov}
x\overline{v}+u\overline{y}  & =e^{-2i\kappa}\big((xv-yu)+(e^{2i(\kappa-\varphi)}+1)yu\big)=e^{-2i\kappa}\det P +2\cos (\kappa-\varphi)e^{-i(\kappa+\varphi)}uy\nonumber\\
&=e^{-2i\kappa}\det P +2\Rea (\overline{y}v) \tfrac{u}{v}.
\end{align}
Combining this with the second and the third equation of (\ref{eqBF8}), and comparing it with (\ref{vdivu}) for $\widetilde{b}=0$, leads to
\[
\frac{u}{v}=\tfrac{\overline{\delta}_2-e^{-2i\kappa}\det P}{-(-1)^{k}+\delta_4}
=\tfrac{-b\det P+\epsilon_2}{\widetilde{d}+\epsilon_4}=\tfrac{\widetilde{a}+\epsilon_4}{b\det P+\epsilon_2}.
\]
Since $1=|\frac{\det \widetilde{A}}{\det A}|=|\frac{\det \widetilde{B}}{\det B}|=|\frac{\widetilde{a}\widetilde{d}}{-b^{2}}|$ (see (\ref{pogoj})), then $b^{2}=\widetilde{a}\widetilde{d}$ and Lemma \ref{lemadet} (\ref{lemadetb}) yields $\det P=vx-uy=(-1)^{l}i+\delta'$, $l\in \mathbb{Z}$, $|\delta'|\leq \frac{\epsilon(4\widetilde{d}+2)}{b^{2}}$. Thus either $\widetilde{a}=\widetilde{d}=b=1$ or we obtain a contradiction for any suitably small $\epsilon,\delta$.

\quad
For $c(s)=1$, $P(s)=\frac{1}{2}\begin{bsmallmatrix}
\frac{1}{2s} & -\frac{i}{2s}\\
s & is
\end{bsmallmatrix}$ in (\ref{cPepsi})   
it follows that
$(1\oplus -1,I_2)\to
\big(\begin{bsmallmatrix}
0 & 1\\
1 & 0
\end{bsmallmatrix},
\begin{bsmallmatrix}
0 & 1\\
1 & 1
\end{bsmallmatrix}\big)$.

\end{enumerate}

\item \label{p1-1v0110c}
$B=1\oplus d
$, \quad $\Ima d>0$

From (\ref{eqBF1}) for $b=0$, $a=1$ we obtain
\begin{align}\label{eqBF10}
&x^{2}+du^2=\widetilde{a}+\epsilon_1,\nonumber \\
&xy+duv=\widetilde{b}+\epsilon_2,\\ 
&y^2+dv^{2}=\widetilde{d}+\epsilon_4. \nonumber
\end{align}

\begin{enumerate}[label=(\roman*),wide=0pt,itemindent=4em,itemsep=3pt]
\item \label{p1-1v0110ci}
$\widetilde{B}=
\begin{bsmallmatrix}
0 & \widetilde{b}\\
\widetilde{b} & 0
\end{bsmallmatrix}$, $\widetilde{b}>0$

The first (the second) equation of (\ref{eqBF8}) yields that at least one of $|x|^2,|u|^2$ (and $|y|^2,|v|^2$) is greater or equal to $\frac{1}{4}$ (remember $\delta\leq \frac{1}{2}$). 
Since equations (\ref{eqBF10}) for $\widetilde{a}=\widetilde{d}=0$ are the same as in (\ref{eqbad}) for $a=1$, $d\in \mathbb{C}$, also inequalities (\ref{eqbad13}) for $a=1$ are valid; if $|x|^2\geq \frac{1}{4}$, then $|du|^{2}\geq \frac{1}{4}-\epsilon$ (if $|u|^2\geq \frac{1}{4}$ we have $|x|^{2}\geq \frac{|d|}{4}-\epsilon$). Similarly holds for $y,v$ as well, so:
\begin{equation}\label{ocenaux4}
|x|^{2},|y|^{2},|du|^{2},|dv|^{2}\geq \tfrac{1}{4}\min\{|d|,1\}-\epsilon.
\end{equation}
%
Applying (\ref{ocenah}) with $\epsilon\leq\frac{1}{2}\big(\frac{1}{4}\min\{1,|d|\}-\epsilon\big)$ to the first and to the last equation of (\ref{eqBF10}) for $\widetilde{a}=\widetilde{d}=0$, $d=|d|e^{i\vartheta}$ and $x,y,u,v$ as in (\ref{polarL}), leads to
\[
\psi_1=2\phi-2\eta-\vartheta-\pi, \quad
\psi_2=2\varphi-2\kappa-\vartheta-\pi, \quad 
|\sin \psi_1|,|\sin \psi_2|\leq \tfrac{8\epsilon}{\min\{1,|d|\}-4\epsilon}.
\]
The last equation of (\ref{eqBF8}) (using (\ref{ocenah})) similarly gives 
\[
\psi=-\phi+\kappa-(-\eta +\varphi), \qquad 
|\sin \psi|\leq \tfrac{8\delta}{|d|}\big(\min\{1,|d|\}-4\epsilon\big)^{-1}.
\]
Collecting everything together yields $\frac{1}{2}(\psi_1+\psi_2)+\psi=-2\pi-\vartheta$ and
\[
|\sin \vartheta|=\big|\sin (\tfrac{1}{2}\psi_1+\tfrac{1}{2}\psi_2+\psi)\big|\leq \tfrac{1}{2}|\sin \psi_1|+\tfrac{1}{2}|\sin \psi_2|+|\sin\psi|\leq  \tfrac{4(\epsilon+\frac{\delta}{|d|})}{\min\{1,|d|\}(1-\delta)-2\epsilon}.
\]
Since $\Ima d>0$ we can easily choose $\epsilon$, $\delta$ to contradict this inequality.

\item \label{p1-1v0110cii}
$\widetilde{B}=\widetilde{a}\oplus \widetilde{d}
$, \quad $0<\widetilde{a}\leq \widetilde{d}$

The first two equations of (\ref{eqBF8}) imply $|xu|,|vy|\geq \frac{1}{2}(1-\delta)$. 
Hence $|xvuy|\geq \frac{1}{4}(1-\delta)^{2}$, so the second equation of (\ref{eqBF10}) for $\widetilde{b}=0$ (the last equation of (\ref{eqBF8})) yields that either $|duv|$ or $|xy|$ (either $|\overline{x}v|$ or $|\overline{u}y|$) or both are at least $\frac{1}{2}\sqrt{|d|}(1-\delta)$ (at least $\frac{1}{2}(1-\delta)$). 
By setting $x,y,u,v$ as in (\ref{polarL}), $d=|d|e^{i\vartheta}$ and using (\ref{ocenah}) we also deduce:
\begin{equation}\label{ocenakot1}
\psi_1=(\phi+\varphi)-(\eta +\kappa+\vartheta+\pi)+2l_1\pi,\qquad \psi_2=(-\phi+\kappa)-(\varphi-\eta+\pi)+2l_2\pi
\end{equation}
with $|\sin \psi_1|\leq \frac{4\epsilon}{\sqrt{|d|}(1-\delta)}$, $|\sin \psi_2|\leq \frac{4\delta}{1-\delta}$, $l_1,l_2\in \mathbb{Z}$. Hence $\psi_1+\psi_2=\vartheta+(2l_1+2l_2-1)\pi$,
\begin{equation}\label{ocenakot2}
|\sin \vartheta|=\big|\sin (\psi_1+\psi_2)\big|\leq |\sin \psi_1|+|\sin \psi_2|\leq \tfrac{4\epsilon}{\sqrt{|d|}(1-\delta)}+\tfrac{4\delta}{1-\delta},
\end{equation}
which fails for $\epsilon=\delta<\frac{\sqrt{|d|}|\sin \vartheta|}{4+5\sqrt{|d|}+\sqrt{|d|}|\sin \vartheta|}$ (remember $\Ima d>0$). 
\end{enumerate}

\end{enumerate}

\item \label{p0110v1-1}
$
\big(\begin{bsmallmatrix}
0 & 1\\
1 & 0
\end{bsmallmatrix},
\widetilde{B}\big)\dashrightarrow
(1\oplus -1,B)
$

Lemma \ref{lemapsi1} (\ref{lemapsi11}) for (C\ref{r12}) gives 
\begin{equation}\label{est12}
 |x|^2-|u|^2= \delta_1,\;\; \overline{x}y-\overline{u}v-(-1)^{k}= \delta_2,\;\; |y|^2-|v|^2= \delta_4,\;\; |\delta_1|,|\delta_2|,|\delta_4|\leq \delta,\,k\in\mathbb{Z}.
\end{equation}

\begin{enumerate}[label=(\alph*),wide=0pt,itemindent=2em,itemsep=6pt]

\item $
B=a\oplus d
$, \quad$0<a\leq d$

\begin{enumerate}[label=(\roman*),wide=0pt,itemindent=4em,itemsep=3pt]
\item
$
\widetilde{B}=1\oplus \widetilde{d}
$,\quad $\Ima \widetilde{d}>0$\\
We have equations (\ref{eqBFadad}) for $\widetilde{a}=1$, $\Ima \widetilde{d}>0$.
By combining calculations (\ref{exu2v2}), (\ref{exuv}) for $\sigma=-1$ with the first two equations of (\ref{eqBFadad}) for $\widetilde{a}=1$ and the first two inequalities of (\ref{est12}) we deduce 
\[
\tfrac{\epsilon_2}{a}+\tfrac{1}{a}= e^{2i\phi}\delta_1-u^{2} (- e^{2i(\phi-\eta)}-\tfrac{d}{a}),\qquad 
(-1)^{k}+ \delta_2=e^{-2i\phi}\big(\tfrac{\epsilon_2}{a}+uv(- e^{2i(\phi-\eta)}-\tfrac{d}{a})\big).
\]
By rearranging the terms and appying the triangle inequality we further get
\[
\Big|\big|u^{2}(e^{2i(\phi-\eta)}+\tfrac{d}{a})\big|-\tfrac{1}{a}\Big|\leq \tfrac{\epsilon}{a}+\delta, \qquad 
\Big|\big|uv( e^{2i(\phi-\eta)}+\tfrac{d}{a})\big|-1\Big|\leq \tfrac{\epsilon}{a}+\delta.
\]
It is immediate that $\big|a|u|^{2}-|uv|\big|\,| e^{2i(\phi-\eta)}+\tfrac{d}{a}|\leq (a+1)(\tfrac{\epsilon}{a}+\delta)$. Moreover, for $a\neq d$: 
\[
\tfrac{\frac{1}{a}-\frac{\epsilon}{a}-\delta}{|1+\frac{d}{a}|} \leq |u|^{2}\leq \tfrac{\frac{1}{a}+\frac{\epsilon}{a}+\delta}{|1-\frac{d}{a}|},\qquad
\big|a|u|-|v|\big|\leq 
\tfrac{(a+1)(\frac{\epsilon}{a}+\delta)\sqrt{|1+\frac{d}{a}|}}{| 1-\frac{d}{a}|\sqrt{\frac{1}{a}-\frac{\epsilon}{a}-\delta}}.
\]
It follows (see also (\ref{est12})) that for sufficiently small $\epsilon,\delta$ we have $|x|,|u|$ (and $|y|,|v|$) arbitrarily close and bounded (for $a\neq d$), so it is straightforward to get a contradiction with the second equation of (\ref{eqBFadad}).

\quad
Next, let $a=d$. By combining (\ref{exu2v2}), (\ref{exuv}) for $\sigma=-1$, $a=d$ with equations (\ref{est12}), (\ref{eqBFadad}) for $\widetilde{a}=1$,  and slightly simplifying, we obtain 
\begin{align*}
&2u^{2}\cos (\phi-\eta)e^{i(\phi-\eta)}=\tfrac{1+\epsilon_1}{a}-e^{2i\phi}\delta_1, \; -2uv\cos (\phi-\eta)e^{-i(\phi+\eta)}=(-1)^{k}+\delta_2-e^{-2i\phi}\epsilon_2,\\
& 2v^{2}\cos (\varphi-\kappa)e^{i(\varphi-\kappa)}=\tfrac{\widetilde{d}+\epsilon_4}{a}-e^{2i\varphi}\delta_4,
 \; -2uv\cos (\varphi-\kappa)e^{-i(\varphi+\kappa)}=(-1)^{k}+\delta_2-e^{-2i\varphi}\epsilon_2.
\end{align*}

By applying (\ref{ocenah}) to these equations with $\widetilde{d}=|\widetilde{d}|e^{i\widetilde{\vartheta}}$ 
we then deduce
\begin{align*}
&\psi_1=(\phi+\eta)-\pi l_1,\quad \psi_3=(\kappa-\phi)-\pi(k+l_3),\quad |\sin \psi_1|\leq 2\epsilon+2a\delta,\; |\sin \psi_3|\leq 2\epsilon+\delta, \\
& \psi_2=(\varphi+\kappa)-\pi l_2-\widetilde{\vartheta}, \;\;\; \psi_4=(\eta-\varphi)-\pi(k+l_4), \;\;\; |\sin \psi_2|\leq \tfrac{2\epsilon+2a\delta}{|\widetilde{d}|},\, |\sin \psi_4|\leq 2\epsilon+2\delta,
\end{align*}
where $l_1,l_2,l_3,l_4\in \mathbb{Z}$. It implies further that 
$\psi_1+\psi_3-\psi_2-\psi_4=2\pi(l_1-l_2+l_3-l_4)+\widetilde{\vartheta}$, which yields an inequality that fails for $\epsilon=\delta=\frac{|\Ima \widetilde{d}|}{8+(1+a)(1+|\widetilde{d}|)}$ (recall $\Ima \widetilde{d}>0$):
\[
|\sin \vartheta|\leq (2\epsilon+2a\delta)(1+\tfrac{1}{|\widetilde{d}|})+4\epsilon+\delta.
\]


\item
$
\widetilde{B}=\begin{bsmallmatrix}
0 & \widetilde{b}\\
\widetilde{b} & 1
\end{bsmallmatrix}
$, $\widetilde{b}>0$, \quad $ad=\widetilde{b}^{2}$ (see (\ref{pogoj}))\\
%
%
Combining the first and the last equality of (\ref{est12}) with the first and the last equation of (\ref{eqBF1}) for $\widetilde{d}=1$, $\widetilde{a}=b=0$, and applying the triangle inequality, we deduce
\begin{align*}
&|a-d||u|^{2}\leq \big|a|x|^2-d|u|^{2}\big|+\big|a|u|^{2}-a|x|^{2}\big|\leq |ax^2+du^{2}|+a\big||u|^{2}-|x|^{2}\big|\leq \epsilon+a\delta,\\
&|a-d| |v|^{2}\leq \big|a|y|^2-d|v|^{2}\big|+\big|a|v|^{2}-a|y|^{2}\big|\leq |ay^2+dv^{2}|+a\big||y|^{2}-|v|^{2}\big|\leq 1+\epsilon+a\delta .\nonumber
\end{align*}
These inequalities and (\ref{est12}) imply the upper estimates for $|x|,|y|,|u|,|v|$ in case $a\neq d$, which further gives an inequality that fails for any $\epsilon,\delta$ sufficiently small: 
\[
1-\delta\leq |\overline{x}y-\overline{u}v|\leq  \tfrac{\sqrt{(\epsilon+a\delta)(1+\epsilon+a\delta)}}{|a-d|}+(\tfrac{\epsilon+a\delta}{|a-d|}+\delta )(\tfrac{1+\epsilon+a\delta}{|a-d|}+\delta),\qquad a\neq d.
\]

\quad
Next, when $a=d$ (hence $\widetilde{b}=d$) it is convenient to conjugate the first pair of matrices with 
$\frac{1}{2}
\begin{bsmallmatrix}
2 & 2\\
1 & -1
\end{bsmallmatrix}
$:
\[
\big(\begin{bsmallmatrix}
0 & 1\\
1 & 0
\end{bsmallmatrix},
\begin{bsmallmatrix}
0 & d\\
d & 1
\end{bsmallmatrix}\big)
\approx
\big(1\oplus -1,
\tfrac{1}{4}\begin{bsmallmatrix}
4d+1 & 1\\
1 & -4d+1
\end{bsmallmatrix}\big)
\dashrightarrow
(
1\oplus -1,
d I_2 ), \quad d>0
.
\]

Using (\ref{lemapsi111}) for $\sigma=-1$ and (\ref{eqBF1}) for $\widetilde{a}=\frac{1}{4}(4d+1)$, $\widetilde{d}=\frac{1}{4}(-4d+1)$, $\widetilde{b}=\frac{1}{4}$, $a=d$, $b=0$, we can write the identities  (\ref{exu2v2}), (\ref{exuv}) for $a=d$, $\sigma=-1$ in the form
\begin{align}\label{equvu2v2}
&\qquad \tfrac{1+4d}{4d}+ \tfrac{\epsilon_1}{d}=e^{2i\phi}\big(\delta_1'+(-1)^{k}\big)+u^{2}(e^{2i(\phi-\eta)}+1), \\
&\qquad \tfrac{1-4d}{4d}+ \tfrac{\epsilon_3}{d}=e^{2i\varphi}\big(\delta_3'-(-1)^{k}\big)+v^{2}(e^{2i(\varphi-\kappa)}+1),\nonumber\\
\delta_2'=& e^{-2i\phi}\big((\tfrac{1}{4d}+\tfrac{\epsilon_2}{d})-uv(e^{2i(\phi-\eta)}+1)\big), \quad
\delta_2'=e^{-2i\varphi}\big((\tfrac{1}{4d}+\tfrac{\epsilon_2}{d})-uv(e^{2i(\varphi-\kappa)}+1)\big),\nonumber
\end{align}
where $|\delta_1'|,|\delta_2'|,|\delta_3'|\leq \delta$, $k\in \mathbb{Z}$. By combining the first two and the last two equations (rearranging the terms and then multiplying the equations) we obtain
\small
\begin{align*}
u^{2}v^{2}(e^{2i(\phi-\eta)}+1)(e^{2i(\varphi-\kappa)}+1) = & \big(  \tfrac{1+4d}{4d}+\tfrac{\epsilon_1}{d}-e^{2i\phi}(\delta_1' +(-1)^{k})\big)\big(  \tfrac{1-4d}{4d}+\tfrac{\epsilon_3}{d}-e^{2i\varphi}(\delta_3'-(-1)^{k})\big)\\
u^{2}v^{2}(e^{2i(\phi-\eta)}+1)(e^{2i(\varphi-\kappa)}+1) = &(  \tfrac{1}{4d}+\tfrac{\epsilon_2}{d}-e^{2i\phi}\delta_2' )(  \tfrac{1}{4d}+\tfrac{\epsilon_2}{d}-e^{2i\varphi}\delta_2' ),
\end{align*}
\normalsize
respectively. By comparing the right-hand sides of the equations and rearranging the terms we eventually conclude
\begin{align*}
\tfrac{1}{16d^{2}}&=\tfrac{1-16d^{2}}{16d^{2}}+(-1)^{k}( -e^{2i\phi}\tfrac{1-4d}{4d}+\tfrac{1+4d}{4d}e^{2i\varphi})-e^{2i\phi+2i\varphi}-\epsilon'\\
\epsilon'&=-1-e^{2i(\phi+\varphi)}+(-1)^{k}(e^{2i\phi}+e^{2i\varphi})+\tfrac{1}{4d}(-1)^{k}(e^{2i\varphi}-e^{2i\phi})\\
\epsilon'&=\big(4\sin (\phi+\tfrac{k\pi}{2})\sin (\varphi+\tfrac{k\pi}{2})+i\tfrac{1}{2d}\sin (\phi-\varphi)\big) e^{i(\phi+\varphi+k\pi)},
\end{align*}
where $|\epsilon'|\leq 2(\frac{1+4d}{4d}+\frac{\epsilon}{d}+\delta)(\tfrac{\epsilon}{d}+\delta)+2(\frac{1}{4d}+\frac{\epsilon}{d}+\delta)(\frac{\epsilon}{d}+\delta)$. Thus $\tfrac{1}{2d}\big|\sin (\phi-\varphi)\big|\leq \epsilon'$ and either $2\big|\sin (\phi+\tfrac{k\pi}{2})\big|\leq \sqrt{\epsilon'}$ or $2\big|\sin (\varphi+\tfrac{k\pi}{2})\big|\leq \sqrt{\epsilon'}$ (or both). Observe also that if one of the expressions $2|\sin (\phi+\tfrac{k\pi}{2})|$ or $2|\sin (\varphi+\tfrac{k\pi}{2})|$ is bounded by $\sqrt{\epsilon'}$, then by using the angle diffence formula for sine and applying the triangular inequality we deduce that the other expression is bounded by $ \tfrac{\sqrt{\epsilon'}+\epsilon'}{\sqrt{1-\epsilon'}}$.  

\quad
In the same manner, but with $x,u,y,v,\phi,\eta,\varphi,\kappa,\delta_1',\delta_2',\delta_3',k$ replaced by $u,x,v,y,\eta$, $\phi,\kappa,\varphi,-\delta_1',-\delta_2',-\delta_3',k+1$, respectively, in (\ref{exuv}), (\ref{exu2v2}) for $a=d=\widetilde{b}$, $\sigma=-1$ and (\ref{equvu2v2}), we conclude that 
$2\big|\sin (\eta+\tfrac{(k+1)\pi}{2})\big|,2\big|\sin (\kappa+\tfrac{(k+1)\pi}{2})\big| \leq \tfrac{\sqrt{\epsilon'}+\epsilon'}{\sqrt{1-\epsilon'}}$. It follows that
$\big|\cos (\phi+\kappa)\big|=\big|\sin (\phi+\frac{k\pi}{2}+\kappa+\frac{(k+1)\pi}{2})\big|\leq 2\tfrac{\sqrt{\epsilon'}+\epsilon'}{\sqrt{1-\epsilon'}}$.
On the other hand, the third equation of (\ref{equvu2v2}) with its terms rearranged and simplified can be written as
\[
e^{2i\phi}\delta_2'- \tfrac{\epsilon_2}{d}=\tfrac{1}{4d}-2|uv|\cos (\phi-\eta)e^{i(\phi+\kappa)}.
\]
Provided that $\frac{1}{8d}\geq \delta+\frac{\epsilon}{d}$, then applying (\ref{ocenah}) yields $|\sin(\phi+\kappa)|\leq 8d(\delta+\frac{\epsilon}{d})$. We have a contradiction for any small enough $\epsilon,\delta$.
\end{enumerate}

\item
$
B=\begin{bsmallmatrix}
0 & b\\
b & 0
\end{bsmallmatrix}$, $b>0$

\begin{enumerate}[label=(\roman*),wide=0pt,itemindent=4em,itemsep=3pt]

\item
$
\widetilde{B}=\begin{bsmallmatrix}
0 & \widetilde{b}\\
\widetilde{b} & 1
\end{bsmallmatrix}
$, $\widetilde{b}>0$\\
The first equation of (\ref{eqabdb}) and (\ref{est12}) yield that at least one of $|x|^{2}$ or $|u|^{2}$ is not larger that $\frac{\epsilon}{2b}$ and the other is not larger than $\frac{\epsilon}{2b}+\delta$. Similarly, the last equation of (\ref{eqabdb}) and the last equation of (\ref{est12}) give $|y|^{2}, |v|^{2}\leq \frac{1+\epsilon}{2b}+\delta$. The second estimate of (\ref{est12}) finally implies a contradiction for any  sufficiently small $\epsilon,\delta$:
\[
1-\delta\leq |\overline{x}y-\overline{u}v|\leq 2(\tfrac{\epsilon}{2b}+\delta)(\tfrac{1+\epsilon}{2b}+\delta).
\]

\item
$\widetilde{B}=1\oplus \widetilde{d}
$, \quad
$\Ima \widetilde{d} >0$\\
The same proof as in \ref{p011001i} \ref{ha1} \ref{c6bii} works here, too. 

\end{enumerate}

\item $B=0\oplus d$, $d>0$,\quad $\widetilde{B}=1\oplus 0$\\
We consider
$\big(\begin{bsmallmatrix}
0 & 1\\
1 & 0
\end{bsmallmatrix},1 \oplus 0 \big)\approx \big( 1\oplus-1,
\begin{bsmallmatrix},
1 & 1\\
1 & 1
\end{bsmallmatrix}\big)\dashrightarrow (1\oplus -1,0\oplus d)$, $d>0$. The first and the last equation of (\ref{eqBF1}) for $a=b=0$, $\widetilde{a}=\widetilde{b}=\widetilde{d}=1$ yield that $u^2=\frac{1+\epsilon_1}{d}$ and $v^2=\frac{1+\epsilon_4}{d}$. Hence the first and the last estimate of (\ref{lemapsi111}) for $\sigma=-1$ give 
\begin{equation*}
\big||x|^2-(\tfrac{1}{d}+(-1)^{k})\big|\leq \delta+\tfrac{\epsilon}{d} ,\quad \big||y|^2-(\tfrac{1}{d}-(-1)^{k})\big|\leq\delta+\tfrac{\epsilon}{d},\qquad k\in \mathbb{Z},
\end{equation*}
respectively. For $d\geq 1$ at least one of these inequalities fails (provided that $\delta+\frac{\epsilon}{d}<1-\frac{1}{d}$), while for $d<1$
by combining the inequalities with $(\frac{1-\epsilon}{d}-\delta)^{2}\leq \big(|\overline{u}v|-\delta\big)^{2} \leq |\overline{x}y|^{2}$ (see the second estimate of (\ref{lemapsi111}) for $\sigma=-1$) and simplifying, we get
\begin{align*}
(\tfrac{1-\epsilon}{d}-\delta)^{2}
& \leq 
\big(\tfrac{1}{d}+(-1)^{k}+\tfrac{\epsilon}{d}+\delta\big)\big(\tfrac{1}{d}-(-1)^{k}+\tfrac{\epsilon}{d}+\delta\big),\\
-2\tfrac{\epsilon}{d}+\tfrac{\epsilon^{2}}{d^{2}}
& \leq 
-1+2(\tfrac{\epsilon}{d}+\delta)\tfrac{2}{d}+(\tfrac{\epsilon}{d}+\delta)^{2}.
\end{align*}
It is not difficult to choose $\epsilon,\delta$ small enough to contradict this inequality.
\end{enumerate}

\item \label{p01t001t01}
$
\bigl(\begin{bsmallmatrix}
0 & 1\\
1 & 0 
\end{bsmallmatrix}, \widetilde{B}
\bigr)\dashrightarrow
\bigl(\begin{bsmallmatrix}
0 & 1\\
1 & 0
\end{bsmallmatrix},B
\bigr)
$, \quad $\det B=\det \widetilde{B}\neq 0$ (see Lemma \ref{lemalist} and (\ref{pogoj})) 

Lemma \ref{lemapsi1} (\ref{lemapsi11}) with (C\ref{r7}) for $\alpha=\omega=0$, $\beta=1$ and with some $|\delta_1|,|\delta_2|,|\delta_4|\leq \delta$ gives
\begin{equation}\label{eqBF88}
2\Rea (\overline{x}u)=\delta_1, \quad \overline{x}v+\overline{u}y=(-1)^{k}+\delta_2, \quad 2\Rea (\overline{y}v)=\delta_4, \qquad k\in \mathbb{Z}.
\end{equation}

\begin{enumerate}[label=(\alph*),wide=0pt,itemindent=2em,itemsep=6pt]


\item
$
B=1\oplus d
$,
$
\widetilde{B}=1\oplus \widetilde{d}
$, \quad $d=|d|e^{i\vartheta}$, $\widetilde{d}=|\widetilde{d}|e^{i\widetilde{\vartheta}}$, $0<\vartheta,\widetilde{\vartheta}<\pi$, \quad
$|d|=|\widetilde{d}|$, $\vartheta\neq \widetilde{\vartheta}$ 

Due to the second equation of (\ref{eqBF10}) for $\widetilde{b}=0$ it follows that given a positive constant $s\leq \frac{1}{16}$ (to be choosen) it suffices to consider the following two cases:

\begin{enumerate}[label=(\roman*),wide=0pt,itemindent=4em,itemsep=3pt]

\item $|xy|,|duv|\leq s$\\
It is clear that either $|x|^{2}\leq s$ or $|y|^{2}\leq s$. We only consider $|x|^{2}\leq s$ (the case $|y|^{2}\leq s$ is treated similarly, we just replace $x,y,u,v,\widetilde{a},\widetilde{d},\widetilde{\vartheta},\arg \widetilde{a}$ with $y,x,v,u,\widetilde{d},\widetilde{a},\arg \widetilde{a},\widetilde{\vartheta}$). The first equation of (\ref{eqBF10}) yields $|du^{2}-\widetilde{a}|\leq \epsilon+s$ and (\ref{ocenah}) gives further $\psi_1=2\eta+\vartheta-\arg \widetilde{a}\in (-\frac{\pi}{2},\frac{\pi}{2})$ with $|\sin \psi_1|\leq \frac{2(\epsilon+s)}{|\widetilde{a}|}$, provided that $s+\epsilon\leq \frac{1}{2|\widetilde{a}|}$. Since $|duv|\leq s$ we also get $|dv^{2}|\leq \frac{s^{2}}{|\widetilde{a}|-\epsilon-s}\leq \frac{2s^{2}}{|\widetilde{a}|}$. Applying (\ref{ocenah}) to the third equation of (\ref{eqBF10}) and to the second equation of (\ref{eqBF88}) implies
\[
\psi_2=2\varphi-\widetilde{\vartheta}, \quad \psi=\varphi-\eta-k\pi,\quad  |\sin \psi_2|\leq 2(\tfrac{2s^{2}}{|\widetilde{a}\widetilde{d}|}+\tfrac{\epsilon}{|\widetilde{d}|}),  \; |\sin \psi|\leq 2\sqrt{\tfrac{2s^{3}}{|\widetilde{a}d|}}+2\delta.
\]
By choosing $s<\min \{\frac{1}{4|\widetilde{a}|},\sqrt{\frac{|\widetilde{a}\widetilde{d}|}{8}}\}$ then for any sufficiently small $\epsilon,\delta$ we provide $|\sin \psi_1|,|\sin \psi_2|,|\sin 2\psi|\leq \frac{1}{2}$ ($2\psi,\psi_1,\psi_2\in (-\frac{\pi}{3},\frac{\pi}{3})$).
Thus $2\psi+\psi_1-\psi_2=\widetilde{\vartheta}+\vartheta-\arg \widetilde{a}-2k\pi\in (-\frac{\pi}{2},\frac{\pi}{2})\setminus \{0\}$ (recall $0<\vartheta,\widetilde{\vartheta}<\pi$, $\vartheta\neq\widetilde{\vartheta}$, $\widetilde{a}=1$) and if in addition $s<\min\big\{\big|\frac{\sin (\widetilde{\vartheta}+\vartheta-\arg \widetilde{a})}{4\widetilde{a}}\big|,\sqrt{\frac{|\widetilde{a}\widetilde{d}\sin (\widetilde{\vartheta}+\vartheta-\arg \widetilde{a})|}{16}}
\big\}$ the next inequality fails for any small $\epsilon,\delta$:
\[
0\neq \big|\sin (\widetilde{\vartheta}+\vartheta-\arg \widetilde{a})\big|=\big|\sin (2\psi+\psi_1-\psi_2)\big|\leq 4\sqrt{\tfrac{2s^{3}}{|\widetilde{a}d|}}+4\delta+2(\epsilon+s)+\tfrac{4s^{2}}{|\widetilde{a}\widetilde{d}|}+\tfrac{2\epsilon}{|\widetilde{d}|}.
\]

\item $|xy|,|duv|\geq s-\delta$

We write the first and the last equality of (\ref{eqBF88}) in the form:
\[
2\big|xu\cos (\eta-\phi)\big|\leq \delta, \qquad 2\big|yv\cos (\kappa-\varphi)\big|\leq \delta,
\]
hence either $8|\cos (\eta-\phi)|^{3},8|\cos (\kappa-\varphi)|^{3}\leq \delta$ or $|v|^{3}\leq \delta$ or $|y|^{3}\leq \delta$ or $|u|^{3}\leq \delta$ or $|x|^{3}\leq \delta$ (or more at the same time).

\quad
First, let $8|\cos (\eta-\phi)|^{3},8|\cos (\kappa-\varphi)|^{3}\leq \delta$. Applying (\ref{ocenah}) to the second equation of (\ref{eqBF10}) for $\widetilde{b}=0$ yields
\[
\psi=(\phi+\varphi)-(\eta+\kappa+\vartheta)=(\phi-\eta)+(\varphi-\kappa)-\vartheta, \qquad |\sin \psi|\leq \tfrac{2\epsilon}{s-\delta}.
\]
Using the angle sum formula for sine and the triangle inequality we obtain an inequlity that fails for any small $\epsilon,\delta$:
\[
|\sin \theta|\sqrt{1-(\tfrac{2\epsilon}{s-\delta})^{2}}-\tfrac{2\epsilon}{s-\delta}|\cos \vartheta|\leq \big|\sin (\psi+\vartheta)\big|=\big|\sin ((\phi-\eta)+(\varphi-\kappa))\big|\leq \sqrt[3]{\delta}.
\]

\quad
Next, suppose $|v|^{3}\leq \delta$. The first and the third equation of (\ref{eqBF10}) lead to 
$\big||x|-|\sqrt{d}u|\big|^{2}\leq \big||x|^{2}-|du^{2}|\big|\leq |\widetilde{a}|+\epsilon $ and 
$\big||y|^{2}-|\widetilde{d}|\big|\leq |y^{2}-\widetilde{d}|\leq \epsilon+|d|\sqrt[3]{\delta^{2}} $.
Combining these facts with the second equation of (\ref{eqBF10}) for $\widetilde{b}=0$ gives
\begin{align*}
\epsilon & \geq |xy+duv|
\geq \big(|\sqrt{d} u|-\sqrt{|\widetilde{a}|+\epsilon}\big)|y|-|duv|
 =|\sqrt{d}u|\big(|y|-|\sqrt{d}v|\big)-|y|\sqrt{|\widetilde{a}|+\epsilon}\\
& \geq |\sqrt{d}u|\big(\sqrt{|\widetilde{d}|-\epsilon-|d|\sqrt[3]{\delta^{2}}}-|\sqrt{d}\sqrt[3]{\delta^{2}}|\big)-\sqrt{|\widetilde{d}|+\epsilon+|d|\sqrt[3]{\delta^{2}}}\sqrt{|\widetilde{a}|+\epsilon}
\end{align*}
For $v\neq 0$ (hence $|u|=\frac{|duv|}{|dv|}\geq \frac{s-\delta}{|d|\sqrt[4]{\delta}}$) we get a contradiction for any small $\epsilon,\delta$. 

\quad
If $v=0$, then (\ref{eqBF10}) for $\widetilde{b}=0$, $\widetilde{}$ yields 
$y^{2}=\widetilde{d}+\epsilon_4$, $xy=\epsilon_2$ (hence $x^{2}=\frac{\epsilon_2^{2}}{\widetilde{d}+\epsilon_4}$), $du^{2}=\widetilde{a}+\epsilon_1-\frac{\epsilon_2^{2}}{\widetilde{d}+\epsilon_4}$. 
Combining this with the squared second equation of (\ref{eqBF88}) gives the equality which fails to hold for any $\epsilon,\delta$ sufficiently small ($\widetilde{\vartheta}\neq \vartheta$, $|d|=|\widetilde{d}|$):
\[
1+2(-1)^{k}\delta_2+\delta_2^{2}=\big ( (-1)^{k} +\delta_2\big)^{2}=\overline{u}^{2}y^{2}=e^{i(\widetilde{\vartheta}-\vartheta)}+e^{i(\widetilde{\vartheta}-\vartheta)}(\epsilon_1-\tfrac{\epsilon_2^{2}}{\widetilde{d}+\epsilon_4})+\tfrac{\epsilon_4}{d}.
\]

\quad
The case $|y|^{3}\leq \delta$ is due to a symmetry treated similarly, with $x,y,du,dv,\widetilde{d},\widetilde{a}$ replaced by $du,dv,x,y,\widetilde{a},\widetilde{d}$; the cases when $|u|^{3}\leq \delta$ or $|x|^{3}\leq \delta$ are dealth likewise.

\end{enumerate}

\item 
$
B=1\oplus d
$, $\Ima d> 0$, \quad
$\widetilde{B}=\begin{bsmallmatrix}
0 & \widetilde{b}\\
\widetilde{b} & 1
\end{bsmallmatrix}
$, $\widetilde{b}>0$\\
In this case it is convenient to conjugate $(A,B)$
with
$\frac{1}{2}
\begin{bsmallmatrix}
2 & 2\\
1 & -1
\end{bsmallmatrix}
$ and consider
\[
\big(\begin{bsmallmatrix}
0 & 1\\
1 & 0
\end{bsmallmatrix},
\begin{bsmallmatrix}
0 & \widetilde{b}\\
\widetilde{b} & 1
\end{bsmallmatrix}\big)\dashrightarrow
\big(\begin{bsmallmatrix}
1 & 0\\
0 & -1
\end{bsmallmatrix},
\tfrac{1}{4}\begin{bsmallmatrix}
4+d & 4-d\\
4-d & 4+d
\end{bsmallmatrix}\big)
\approx
\big(\begin{bsmallmatrix}
0 & 1\\
1 & 0
\end{bsmallmatrix},
\begin{bsmallmatrix}
1 & 0\\
0 & d
\end{bsmallmatrix}\big).
\]
%
%

By setting $y,v$ as in (\ref{polarL}) we conclude from the first equation of (\ref{eqBF1}) that
\small
\begin{align}\label{eq4dux}
\epsilon_1& =\tfrac{1}{4}(4+d)x^2+\tfrac{1}{2}(4-d)xu+\tfrac{1}{4}(4+d)u^2\nonumber\\
& =\tfrac{4+d}{4}\big((|x|^{2}-|u|^{2})e^{2i\phi}+|u|^{2}(e^{2i\phi}+e^{2i\eta})+\tfrac{32-2|d|^{2}-16i\Ima (d)}{|4+d|^{2}}|xu|e^{i(\phi+\eta)}\big)\\
   & =\tfrac{4+d}{4}\big(|x|^{2}-|u|^{2}\big)e^{2i\phi}+\tfrac{4+d}{4}e^{i(\phi+\eta)}\big(|u|^{2}\cos (\phi-\eta)+\tfrac{32-2|d|^{2}}{|4+d|^{2}}|xu|-i\tfrac{16\Ima (d)}{|4+d|^{2}}|xu|\big).\nonumber
\end{align}
\normalsize

Observe that the first summand on the right-hand side is smaller than $\delta$ (as in \ref{p0110v1-1} we have (\ref{est12})), thus we get the  estimate $\frac{16|\Ima (d)|}{|4+d|^{2}}|xu|\leq \frac{4\epsilon }{|4+d|} +\delta $. Combining it with the first equality of (\ref{est12}) we deduce that $|x|^{2},|u|^{2}\leq \frac{|4+d|^{2}}{16|\Ima (d)|} (\frac{4\epsilon }{|4+d|} +\delta )+\delta $.

\quad
Similarly as in (\ref{eq4dux}) we write the third equation of (\ref{eqBF1}) in the form
\small
\begin{align*}
1+\epsilon_4 =\tfrac{4+d}{4}\big(|y|^{2}-|v|^{2}\big)e^{2i\varphi)}+\tfrac{4+d}{4}e^{i(\varphi+\kappa)}\big(|v|^{2}\cos (\varphi-\kappa)+\tfrac{32-2|d|^{2}}{|4+d|^{2}}|vy|-i\tfrac{16\Ima (d)}{|4+d|^{2}}|vy|\big)
\end{align*}
\normalsize
and using the first and the last equality of (\ref{est12}) we deduce 
$\frac{16|\Ima (d)|}{|4+d|^{2}}|yv|\leq \frac{4+4\epsilon }{|4+d|} +\delta $, $|y|^{2},|v|^{2}\leq \frac{|4+d|^{2}}{16|\Ima (d)|} (\frac{4+4\epsilon }{|4+d|} +\delta )+\delta $. Applying the triangle inequality to the squared second equality of (\ref{est12}) now leads to an inequality that fails for any small $\epsilon,\delta$:
\[
(1-\delta)^{2}\leq |\overline{x}v+\overline{u}y|^{2}\leq 4\big(\tfrac{|4+d|^{2}}{16|\Ima (d)|} (\tfrac{4+4\epsilon }{|4+d|} +\delta )+\delta\big)\big(\tfrac{|4+d|^{2}}{16|\Ima (d)|} (\tfrac{4\epsilon }{|4+d|} +\delta )+\delta \big).
\]

\item 
$B=\begin{bsmallmatrix}
0 & b\\
b & 1
\end{bsmallmatrix}
$, $b>0$,\quad 
$\widetilde{B}=1\oplus \widetilde{d}
$, $\widetilde{d}=|\widetilde{d}|e^{i\widetilde{\vartheta}}$, $\pi> \widetilde{\vartheta}> 0$, \qquad $|\widetilde{d}|=b^{2}$ (see (\ref{pogoj}))
\\
We have equations (\ref{eqBF5}) for $\widetilde{a}=1$, $\widetilde{b}=0$ and (\ref{vdivu}) is valid for $\widetilde{b}=0$, $\widetilde{a}=1$, $u,v\neq 0$, thus $\frac{v}{u}=\frac{b\det P+\epsilon_2}{1+\epsilon_4}$. (If $u=0$ ($v=0$) the first (the last) of equations (\ref{eqBF5}) fails for $\epsilon<1$ ($\epsilon<|\widetilde{d}|$).)
Further, from (\ref{calcxov}), (\ref{eqBF88}) we obtain $(-1)^{k}+\overline{\delta}_2=e^{-2i\kappa}\det P +\delta_4 \tfrac{u}{v}$. Similarly, with $x,y,u,v,\varphi,\kappa$ replaced by $y,x,v,u,\phi,\eta$ in (\ref{calcxov}) we get $(-1)^{k}+\overline{\delta}_2=e^{-2i\eta}\det P +\delta_1 \tfrac{v}{u}$. Since $|\frac{\det \widetilde{B}}{\det B}|=|\frac{\widetilde{a}\widetilde{d}}{-b^{2}}|=1$, then Lemma \ref{lemadet} (\ref{lemadetb}) yields $\det P=(-1)^{l}ie^{i\frac{\widetilde{\vartheta}}{2}}+\delta'$, $l\in \mathbb{Z}$, $|\delta'|\leq \frac{\epsilon(4|\widetilde{d}|+2)}{b^{2}}$.
We now collect everything together to deduce:
\[
\frac{|v|^{2}}{|u|^{2}}=\frac{v^{2}}{u^{2}}\frac{(-1)^{k}e^{2i\eta}}{(-1)^{k}e^{2i\kappa}}=\tfrac{(b\det P+\epsilon_2)^{2}}{(1+\epsilon_4)^{2}}
\cdot
\tfrac{\det P +e^{2i\eta}\delta_1 \tfrac{v}{u}-\overline{\delta}_2e^{2i\eta}}{\det P +e^{2i\kappa}\delta_4 \tfrac{u}{v}-e^{2i\kappa}\overline{\delta}_2}=\tfrac{b^{2}(-1)^{3l}i^{3}e^{i\frac{3\widetilde{\vartheta}}{2}}+\widetilde{\epsilon}'}{(-1)^{l}ie^{i\frac{\widetilde{\vartheta}}{2}}+\widetilde{\epsilon}''}=\tfrac{-b^{2}e^{i\widetilde{\vartheta}}+\epsilon'}{1+\epsilon''},
\]
$|\widetilde{\epsilon}'|,|\widetilde{\epsilon}''|,|\epsilon'|,|\epsilon''|\leq K \max \{\epsilon,\delta\} $, where a constant $K>0$ can be obtained by a straightforward computation. If $\epsilon, \delta$ are small enough it contradicts $\pi> \widetilde{\vartheta}> 0$.


\end{enumerate}

\item \label{p0010}
$
(0_2,
1\oplus \sigma)\dashrightarrow (A,B)
$

\begin{enumerate}[label=(\alph*),wide=0pt,itemindent=2em,itemsep=6pt]
\item $\sigma=0$

\begin{enumerate}[label=(\roman*),wide=0pt,itemindent=4em,itemsep=3pt]
\item \label{p0010t}
$A=1\oplus e^{i\theta}
$, \quad $0\leq \theta\leq \pi$

For $\theta \neq \pi$ we have $\|P\|^2\leq \delta$ (Lemma \ref{lemapsi1} (\ref{lemapsi11}) for (C\ref{r3}) with $\alpha=0$), hence $1=\|\widetilde{B}\|=\|P^{T}BP-F\|\leq 4\delta \|B\|+\epsilon $ fails for $\epsilon=\delta<\frac{1}{4\|B\|+1}$.
If $\theta=\pi$ then $P(s)=\frac{1}{\sqrt{a+d+2b}}\begin{bsmallmatrix}
1 & 0\\
1 & s
\end{bsmallmatrix}$, $c(s)=1$ in (\ref{cPepsi}) proves  
$(0_2,1\oplus 0)\to (1\oplus -1,B)$ for $B=\begin{bsmallmatrix}
0 & b\\
b & 0
\end{bsmallmatrix}$, $b>0$ and $B=a\oplus d, d> 0$.

\item \label{p00101i}
$A=\begin{bsmallmatrix}
0 & 1\\
1 & i
\end{bsmallmatrix}$\\
From Lemma \ref{lemapsi1} (\ref{lemapsi11}) with (C\ref{r10}) for $\alpha=0$ we get $|u|^{2}\leq \delta$. Further, the first equation of (\ref{eqBF1}) for $a=b=0$, $\widetilde{a}=1$ is $du^{2}=1+\epsilon_1$ and fails for $\delta=\epsilon<\frac{1}{1+|d|}$.
Taking $P(s)=\frac{1+i}{2\sqrt{b}}
\begin{bsmallmatrix}
s^{-1} & s^{2}\\
-is & s^{2}
\end{bsmallmatrix}
$ and
$P(s)=\frac{1}{\sqrt{a}}\oplus s
$ with $c(s)=1$ in (\ref{cPepsi}), we prove 
$
(0_2,
1\oplus 0)\to
\big(\begin{bsmallmatrix}
0 & 1\\
1 & i
\end{bsmallmatrix},
\begin{bsmallmatrix}
a & b\\
b & d
\end{bsmallmatrix}\big)
$ for $a=d=0,b>0$ and $a>0$, $b=0$, $ d\in \mathbb{C}$, respectively.

\item 
$A=\begin{bsmallmatrix}
0 & 1\\
\tau & 0 
\end{bsmallmatrix}
$, $0\leq \tau\leq 1$\\
When $a\neq 0$ and $d\neq 0$ we can take 
$P(s)=\frac{1}{\sqrt{a}}\oplus s
$
and 
$P(s)=\begin{bsmallmatrix}
0 & s \\
\frac{1}{\sqrt{d}} & 0
\end{bsmallmatrix}$ with $c(s)=1$ in (\ref{cPepsi}), respectively, to prove 
$
(0_2,
1\oplus 0)\to
\big(\begin{bsmallmatrix}
0 & 1\\
\tau & 0 
\end{bsmallmatrix},
\begin{bsmallmatrix}
a & b\\
b & d
\end{bsmallmatrix}\big)
$. 
Next, let $a=d=0$ (hence $0\leq \tau<1$ by Lemma \ref{lemalist}). From Lemma \ref{lemapsi1} (\ref{lemapsi11}) for (C\ref{r4}) with $\alpha=0$ we get $|ux|\leq \delta$, so the first equation of (\ref{eqBF1}) for $a=d=0$, $\widetilde{a}=1$ 
fails for $\epsilon=\delta\leq \frac{1}{1+2|b|}$.

\item 
$A=1\oplus 0
$\\
%
To prove $
(0_2,
1\oplus 0)\to  
(1\oplus 0,
a \oplus 1)
$, $a\geq 0$, and  
$
(0_2,
1\oplus 0)\to \big(
1\oplus 0,
\begin{bsmallmatrix}
0 & 1\\
1 & 0
\end{bsmallmatrix}\big)
$
we can take
$P(\epsilon)=\begin{bsmallmatrix}
s & s \\
1 & s
\end{bsmallmatrix}$
and 
$P(s)=\frac{1}{\sqrt{2}}\begin{bsmallmatrix}
s & s^{2} \\
s^{-1} & s^{2}
\end{bsmallmatrix}$, both with $c(s)=1$ in (\ref{cPepsi}), respectively.
From Lemma \ref{lemapsi1} (\ref{lemapsi11}) with (C\ref{r11}) for $\alpha=0$ we have $|x|^{2}\leq \delta$. When $\widetilde{B}=\widetilde{a}\oplus 0$, $\widetilde{a}>0$ the first equation of (\ref{eqBF1}) for $a=b=d=0$ (i.e. $0=\widetilde{a}+\epsilon_2$) fails for $\epsilon<\widetilde{a}$. 

\end{enumerate}

\item \label{p0011}
$\sigma=1$

\begin{enumerate}[label=(\roman*),wide=0pt,itemindent=4em,itemsep=3pt]
\item 
$A=1\oplus 0$\\
We prove $(0_2,I_2)\to
\big(1\oplus 0,\begin{bsmallmatrix}
0 & 1\\
1 & 0
\end{bsmallmatrix}\big)
$ by taking 
$P(\epsilon)=\frac{1}{\sqrt{2}}\begin{bsmallmatrix}
s & i s \\
s^{-1} & -i s^{-1}
\end{bsmallmatrix}$ with $c(s)=1$ in (\ref{cPepsi}).
For $B=a\oplus d$, $a,d\geq 0$ we have $|x|^{2},|y|^{2}\leq \delta$ (Lemma \ref{lemapsi1} (\ref{lemapsi11}) with (C\ref{r11}) for $\alpha=0$). Combining it with (\ref{eqBFadad}) for $\widetilde{a}=\widetilde{d}=1$ yields $|du|^{2},|dv|^{2}\geq 1-\epsilon-a\delta$ and $|duv|\leq \epsilon+a\delta$. Thus we obtain a contradiction for $\epsilon=\delta<\frac{1}{2(a+1)}$. 

\item 
$A=\begin{bsmallmatrix}
0 & 1\\
0 & 0 
\end{bsmallmatrix}$\\
By Lemma \ref{lemapsi1} (\ref{lemapsi11}) for (C\ref{r4}) with $\tau=0$ we have $|ux|, |uy|, |vx|,|vy|\leq \delta$.
%
It implies that either $|x|^{2},|y|^{2}\leq \delta$ or $|u|^{2},|v|^{2}\leq \delta$ (or both). If $|x|^{2},|y|^{2}\leq \delta$ then the first and the last equation of (\ref{eqBF1}) for $\widetilde{b}=0$, $\widetilde{a}=\widetilde{d}=1$ give $|d||u|^{2},|d||v|^{2}\geq 1-2b\delta-|a|\delta-\epsilon$, and the  the application of the triangle inequality to second equation further gives
\[
\epsilon+2b\delta \geq |axy+duv|\geq \big| |duv|-|axy| \big|\geq (1-2b\delta-|a|\delta-\epsilon)-|a|\delta,
\]
which fails for $\epsilon=\delta<\frac{1}{2+4b+2|a|}$.
The case $|u|^{2},|v|^{2}\leq \delta$ is for the sake of symmetry treated in a similar fashion, with $u,v,x,y,a,d$ replaced by $x,y,u,v,d,a$, respectively.
\end{enumerate}

\end{enumerate}

\item \label{u1000}
$
(1\oplus 0,
\widetilde{B})\to 
(1\oplus 0,
B)
$\\
From Lemma \ref{lemapsi1} (\ref{lemapsi11}) with (C\ref{r11}) for $\alpha=1$ we get that 
\begin{equation}\label{est13}
\big||x|^2-1\big|\leq \delta ,\qquad |y|^{2}\leq \delta.
\end{equation}

\begin{enumerate}[label=(\alph*),wide=0pt,itemindent=2em,itemsep=6pt]

\item 
$B=\begin{bsmallmatrix}
0 & 1\\
1 & 0
\end{bsmallmatrix}$ \qquad (hence 
$\widetilde{B}=
\widetilde{a}\oplus \widetilde{d}
$, 
$\widetilde{a}\geq 0$, $\widetilde{d}\in \{0,1\}$ by Lemma \ref{lemalist} we have)
The first equation of (\ref{eqadb}) for $b=1$ and $|x|^{2}\geq 1-\delta\geq \frac{1}{2}$ (see (\ref{est13})) yield $|u|\leq \frac{\widetilde{a}+\epsilon}{\sqrt{2}}$,
while the second equation (multiplied with $y$) and the third equation give $\frac{1}{2}x(\widetilde{d}+\epsilon_4)+buy^{2}=bxvy+buy^{2}=y\epsilon_2$.
Using the upper (the lower) estimates on $|u|$, $|y|$ (on $|x|$) we  get an inequality that fails for $\widetilde{d}\neq 0$ and small enough $\epsilon,\delta$:
\[
\tfrac{\widetilde{d}-\epsilon}{2\sqrt{2}}-b\tfrac{\widetilde{a}+\epsilon}{\sqrt{2}}\delta \leq 
x\tfrac{\widetilde{d}+\epsilon_4}{2}+buy^{2}=y\epsilon_2
\leq \sqrt{\delta}\epsilon.
\]
Taking $c(s)=1$, $P(s)=\begin{bsmallmatrix}
\frac{\widetilde{a}}{2} & s \\
1 & 0
\end{bsmallmatrix}$ in (\ref{cPepsi}) proves $
{\ (1\oplus 0,
\widetilde{a}\oplus 0)\to 
\big(1\oplus 0,
\begin{bsmallmatrix}
0 & 1\\
1 & 0
\end{bsmallmatrix}\big)}
$.

\item $B=a\oplus d$, \quad $a\geq 0$, $d\in \{0,1\}$, \qquad $a^{2}+d^{2}\neq 0$ (since $B\neq 0$)\\ 
From (\ref{eqBF1}) for $b=0$ and by slightly rearranging the terms we obtain
\begin{align}\label{eq10111022}
&du^{2}=(\widetilde{a}-a)+\epsilon_1-a(x^{2}-1),\nonumber \\
&duv-\widetilde{b}=\epsilon_2-axy,\\ 
&dv^{2}=\widetilde{d}+\epsilon_4-ay^{2}. \nonumber
\end{align}
Thus using (\ref{est13}) and applying the triangle inequality to (\ref{eq10111022}) we deduce 
\begin{align}\label{est14}
&\big||du|^{2}- |\widetilde{a}-a|\big|\leq a\delta+\epsilon,\nonumber\\
&\big||duv|-|\widetilde{b}|\big| \leq \epsilon+a\sqrt{\delta(1+\delta)},\\
&\big||dv|^{2}- \widetilde{d}| \leq \epsilon+a\delta.\nonumber
\end{align}

\begin{enumerate}[label=(\roman*),wide=0pt,itemindent=4em,itemsep=3pt]

\item $\widetilde{B}=\widetilde{a}\oplus \widetilde{d}$, \quad $\widetilde{a}\geq 0$, $\widetilde{d}\in \{0,1\}$\\
Combining all the estimates in (\ref{est14}) for $\widetilde{b}=0$, $\widetilde{d}=1$ yields
\[
\big(\epsilon+a\sqrt{\delta(1+\delta)}\big)^{2}\geq |duv|^{2}\geq \big(|\widetilde{a}-a|-a\delta-\epsilon\big)\big(1-\epsilon-a\delta\big),
\]
which fails for $a\neq \widetilde{a}$ and appropriately small $\epsilon,\delta$. (Note that $\widetilde{d}=1$ implies $a\neq \widetilde{a}$ (see (\ref{pogoj}),  Lemma \ref{lemalist}) for $\widetilde{B}\neq B$.)
Next, when $B=a\oplus 0$, $\widetilde{B}=\widetilde{a}\oplus 0$, $a\neq \widetilde{a}$, the first inequality of (\ref{est14}) for $d=0$ fails for $\epsilon=\delta<\frac{|\widetilde{a}-a|}{1+a}$.
Finally, 
$c(s)=1$, 
$P(s)=\begin{bsmallmatrix}
1 & 0 \\
\sqrt{\widetilde{a}-a} & s
\end{bsmallmatrix}$ in (\ref{cPepsi})
proves 
$(1\oplus 0,\widetilde{a}\oplus 0)\to(1\oplus 0,a\oplus 1)$.

\item $\widetilde{B}=\begin{bsmallmatrix}
0 & 1 \\
1 & 0
\end{bsmallmatrix}$\\
We have $B=a\oplus 1$, $a>0$. The inequalities in (\ref{est14}) for $\widetilde{a}=\widetilde{d}=0$, $\widetilde{b}=1$ then give
\[
\big(1-\epsilon-a\sqrt{\delta(1+\delta)}\big)^{2}\leq |duv|^{2}\leq \big(a+a\delta+\epsilon\big)\big(\epsilon+a\delta\big),
\]
which fails for $\epsilon,\delta$ small enough.
\end{enumerate}

\end{enumerate}

\item \label{case10to1t}
$(1\oplus 0,
\widetilde{a}\oplus \widetilde{d})\dashrightarrow
(1\oplus e^{i\theta},B)$, \;$0\leq \theta \leq \pi$, $\widetilde{d}\in \{0,1\}$, $\widetilde{a}\geq 0$,\quad $\widetilde{a}\widetilde{d}=0$ (see (\ref{pogoj}))

\begin{enumerate}[label=(\alph*),wide=0pt,itemindent=2em,itemsep=6pt]

\item \label{case10to1ta} $0\leq \theta < \pi$\\
By Lemma \ref{lemapsi1} (\ref{lemapsi11}) for (C\ref{r3}) we have 
$|y|^{2},|v|^{2}\leq \delta$.
If $\widetilde{d}=1$, then the third equation of (\ref{eqBF1}) fails for $ \epsilon=\delta<\big(|a|+2|b|+|d|+1\big)^{-1}$.

\quad
Next, let $\widetilde{d}=0$. To prove (disprove) the existence of a path we need to solve (to see that there are no solutions) the equations (\ref{eqABEF}) for arbitrarily small $E,F$. Given any small $s>0$ 
we must (not) find $x,y,u,v,c$ and $\epsilon_1,\epsilon_2,\epsilon_4$ with
$|\epsilon_1|,|\epsilon_2|,|\epsilon_4|\leq s$ satisfying (\ref{eqBF1}) and such that the expressions in Lemma \ref{lemapsi1} for (C\ref{r3}) are bounded from above by $s$. Observe that by choosing $v,y$ sufficiently small we achieve that the last two equations in (\ref{eqBF1}) for $\widetilde{b}=\widetilde{d}=0$ are fulfilled trivially for some small $\epsilon_2,\epsilon_4$, and the first two expressions in Lemma \ref{lemapsi1} (C\ref{r3}) are arbitrarily small. It is thus left to consider the remaining first equation of (\ref{eqBF1}) and the third expression of Lemma \ref{lemapsi1} (C\ref{r3}); note that they do not depend on $y,v$:
%
\begin{equation}\label{2Req}
ax^{2}+2bux+du^{2}=\widetilde{a}+\epsilon_1, \quad 
|x|^{2}+e^{i\theta}|u|^{2}-c^{-1}= c^{-1}\delta_1, \qquad |\epsilon_1|,|\delta_1|\leq s.
\end{equation}
The second equation can be rewritten as 
$\big(|x|^{2}+\cos \theta |u|^{2}\big)+i\sin \theta |u|^{2}=c^{-1}(1+\delta_1)$ 
and it is clearly equivalent to compare only the (squared) moduli of both sides \[
\big(|x|^{2}+\cos \theta |u|^{2}\big)^{2}+(\sin^{2}\theta) |u|^{4}=|1+\delta_1|^{2}=|1+2\delta_1+\delta_1^{2}|.
\]
By (\ref{ocenakoren}) for $|\delta_1|\leq \frac{1}{3}$ it further implies $|x|^{4}+2\cos \theta |u x|^{2}+ |u|^{4}=1+\delta_2$ for some $\delta_2\in \mathbb{R}$, $|\delta_2|\leq 3|\delta_1|\leq 3s$.
By writting $x,u$ in view of (\ref{polarL}) we can see (\ref{2Req}) as
\begin{align}\label{2Req2}
& e^{i(\phi+\eta)}\big(a|x|^{2}e^{i(\phi-\eta)}+2b|x|\,|u|+d|u|^{2}e^{-i(\phi-\eta)}\big)=\widetilde{a}+\epsilon_1,\qquad |\epsilon_1|\leq s,\\
& |x|^{4}+2\cos \theta |u|^{2} |x|^{2}+ |u|^{4}=1+\delta_2,\qquad \delta_2\in \mathbb{R}, \quad|\delta_2|\leq 3s. \nonumber
\end{align}
Clearly, (\ref{2Req2}) for $\delta_2=0$ is equivalent to (\ref{2Req}) for $\delta_1=0$.

\quad
Next, we observe the range of the  function $f(r,t,\beta)=|ar^{2}e^{i\beta}+2brt+dt^{2}e^{-i\beta}|$ given with a constraint $r^{4}+2r^{2}t^{2}\cos \theta + t^{4}=1$ (see (\ref{izrazf7})).
%
%
Provided that $(R,T)=(r^{2}$, $t^{2})$ lies on on an ellipse $R^{2}+2RT \cos \theta+ T^{2}=1$, we can further assume that either $\frac{r}{t}$ or $\frac{t}{r}$ is any real nonegative number. With a suitable choice of $\beta$ we achieve finally that the following expression (and hence $f$) vanishes:
\begin{align*}
ar^{2}e^{i\beta}+2brt+dt^{2}e^{-i\beta} & =r^{2}e^{i\beta}\big(a+2b(\tfrac{t}{r}e^{-i\beta})+d(\tfrac{t}{r}e^{-i\beta})^{2}\big)\\
&  =t^{2}e^{-i\beta}\big(a(\tfrac{r}{t}e^{i\beta})^{2}+2b(\tfrac{r}{t}e^{i\beta})+d\big).
\end{align*}
If the maximum of $f$ given with a constraint is $M$, its range is thus $[0,M]$. 

\quad
Provided that $\phi-\eta$, $|x|$, $|u|$ (corresponding to $\beta$, $r$, $t$ in (\ref{izrazf7})) are chosen appropriately, the modulus of the left-hand side of the first equation in (\ref{2Req2}) (hence $|ax^{2}+2bux+du^{2}|$) can be any number from the interval $[0,M]$ and the second equation of (\ref{2Req2}) for $\delta_2=0$ (thus $|x|^{2}+e^{i\theta}|u|^{2}-c^{-1}= 0$) is valid, simultaneously.
By a suitable choice of $\phi+\eta$ (see (\ref{2Req2})) we arrange $ax^{2}+2bux+du^{2}\in \mathbb{R}_{\geq 0}$ and so $(1\oplus 0,
\widetilde{a}\oplus 0)\to
(1\oplus e^{i\theta},B)$ for 
$\widetilde{a}\in[ 0,M]$ is proved. In particular, for $a=b=0$ we get $M=|d|$, for $d=b=0$ we obtain $M=|a|$, and for $a=d=0$ we have $M=2|b|\max \{rt\mid r^{4}+2(\cos \theta) r^{2}t^{2}+ t^{4}=1\}$. Moreover, if $\theta=0$, $0\leq a\leq d$, $d>0$ we get $M=d$, and to prove the existence of a path we take 
$P(s)=\frac{1}{\sqrt{a+d}}\begin{bsmallmatrix}
\sqrt{\widetilde{a}+d} & 0 \\
i\sqrt{a-\widetilde{a}} & s
\end{bsmallmatrix}$ for $0\leq \widetilde{a}\leq a \leq d$
and 
$P(s)=\frac{1}{\sqrt{d-a}}\begin{bsmallmatrix}
\sqrt{d-\widetilde{a}} & 0 \\
\sqrt{\widetilde{a}-a} & s
\end{bsmallmatrix}$ for $0\leq a<\widetilde{a}\leq d$, both with $c(s)=1$ in (\ref{cPepsi}).

\quad
Furthermore 
\begin{align*}
\big||f(r,t,\beta)|-|f(r',t',\beta')|\big| &\leq \big|f(r,t,\beta)-f(r',t',\beta')\big|\\
    &\leq |a|\,|r^{2}-(r')^{2}|+2|b|\,|r|\,|t-t'|+2|b|\,|t|\,|r-r'|+|d|\,|t^{2}-(t')^{2}|.
\end{align*}
Applying (\ref{ocenakoren}) to $1+\delta_2'$ and its root yields $\sqrt[4]{1+\delta_2'}=1+\delta_3$, $|\delta_3|\leq |\delta_2'|$. For $r'=r\sqrt[4]{1+\delta_2'}$, $t'=t\sqrt[4]{1+\delta_2'}$, $\delta_2'\in \mathbb{R}$, $|\delta_2'|\leq |\delta_2|\leq 3s$ with  $|r|,|t|\leq 1$ (note that $r^{4}+2\cos \theta r^{2}t^{2}+ t^{4}=1$), we then deduce
\[
\big||f(r,t,\beta)|-|f(r',t',\beta')|\big|
\leq |1-\sqrt[4]{1+\delta_1}|\big(|a|r^{2}+4|b|rt+|d|t^{2}\big)\leq 3s \big(|a|+4|b|+|d|\big).
\]
Since $M$ is the maximum of $f$ with respect to a constraint  (\ref{izrazf7}), 
it follows that the maximum of $f$ on a compact domain given by $|r^{4}+2\cos \theta r^{2}t^{2}+t^{4}-1|\leq |\delta_2|\leq 3s$ 
is at most $M_1=M+ 3s\big(|a|+2|b|+|d|\big)$. Assuming that the second equation of (\ref{2Req2}) is valid, the modulus of the left-hand side of the first equation of (\ref{2Req2}) is then at most $M_1$; if $\widetilde{a}> M$, this equation fails for $s<\frac{\widetilde{a}-M}{1+3|a|+6|b|+3|d|)}$.

\item $\theta=\pi$
\begin{enumerate}[label=(\roman*),wide=0pt,itemindent=4em,itemsep=3pt]

\item 
$
B=a\oplus d
$, \quad $0\leq a\leq d$, $d\neq 0$ (see Lemma \ref{lemalist} , (\ref{pogoj}); $B\neq 0$)\\
From Lemma \ref{lemapsi1} (\ref{lemapsi11}) for (C\ref{r9}) for $\sigma=-1$, $\alpha=1$, $\omega=0$ we get
\begin{equation}\label{est17}
\big||x|^2-|u|^2-(-1)^{k}\big|\leq \delta, \quad |\overline{x}y-\overline{u} v|\leq \delta, \quad \big| |y|^2-|v|^2 \big|\leq \delta, \qquad k\in \mathbb{Z}.
\end{equation}

\quad
As in \ref{p10011001} \ref{p1t1tbi} we obtain $|a-d|\,|xy|\leq d\delta+\epsilon$ (see (\ref{caseIb1})).
Further, from the first equation of (\ref{eqBFadad}) for $\widetilde{a}=0$ we get $a|x|^{2}+d|u|^{2}\geq |ax^{2}+du^{2}|\geq \epsilon$, and by combining it with the first estimate of (\ref{est17}) we deduce
$|a+d|\,|x|^{2}\geq d(1-\delta)+\epsilon$. 
Moreover,
\[
|a-d|^{2}\tfrac{d(1-\delta)+\epsilon}{a+d}|y|^{2}\leq |a-d|^{2}|xy|^{2}\leq (d\delta+\epsilon)^{2}.
\]
When $a\neq d$ we have
$|y|^{2},|v|^{2}\leq \tfrac{(a+d)(d\delta+\epsilon)^{2}}{|a-d|^{2}(d(1-\delta)+\epsilon)}+\delta
$ (see the last estimate in (\ref{est17})), and if further $\widetilde{d}=1$ the third equation of (\ref{eqBFadad}) gives a contradiction for any $\epsilon,\delta$ small enough. 
Next, for $a=d$ and $\widetilde{d}=1$ (hence $\widetilde{a}=0$) the first equation of (\ref{eqBFadad}) and the first estimate in (\ref{est17}) yield
\[
\epsilon \geq d\big| x^{2}+u^{2}\big|\geq d\big||x|^{2}-|u|^{2}\big|\geq d(1-\delta),
\]
which fails for $\epsilon=\delta<\frac{d}{1+d}$. The existence of a path for $\widetilde{d}= 0$ follows from (\ref{cPepsi}) for
\[
P(s)=\left\{
\begin{array}{ll}
\frac{1}{\sqrt{d-a}}
\begin{bsmallmatrix}
\sqrt{d-\widetilde{a}} & 0 \\
i\sqrt{a-\widetilde{a}} & s
\end{bsmallmatrix},  &  0\leq a\leq d, a\geq \widetilde{a} \\
\frac{1}{\sqrt{d+a}}\begin{bsmallmatrix}
\sqrt{d+\widetilde{a}} & 0 \\
\sqrt{\widetilde{a}-a} & s
\end{bsmallmatrix},  &  0\leq a\leq d,  \widetilde{a}\geq a
\end{array}\right., \qquad c(s)=1.
\]

\item
$
B=
\begin{bsmallmatrix}
0 & b\\
b & 0 
\end{bsmallmatrix}
$, $b>0$\\
We take $u^{2}=\frac{-b+ \sqrt{b^{2}+\widetilde{a}^{2}}}{2b}$, $x^{2}=\frac{b+ \sqrt{b^{2}+\widetilde{a}^{2}}}{2b}$, $y=v=s$, $c(s)=1$ to satisfy (\ref{cPepsi}).

\end{enumerate}

\end{enumerate}

\item \label{g100110}
$
(1\oplus 0,
\widetilde{a}\oplus \widetilde{d})\to 
\big(\begin{bsmallmatrix}
0 & 1\\
1 & 0 
\end{bsmallmatrix},
B\big)
$,\quad $\widetilde{d}\in \{0,1\}$, $\widetilde{a} \widetilde{d}=0$ (see Lemma \ref{lemalist} and (\ref{pogoj}))


Lemma \ref{lemapsi1} (\ref{lemapsi11}) for (C\ref{r7}) for $\alpha=1$, $\beta=\omega=0$ yields
\begin{equation}\label{est144}
\big|2\Rea(\overline{x}u)-(-1)^{k}\big|\leq \delta,\quad
\big|2\Rea(\overline{y}v)\big|\leq\delta, \quad  |x\overline{v}+\overline{u}y|\leq \delta, \quad k\in \mathbb{Z}.
\end{equation}

\begin{enumerate}[label=(\alph*),wide=0pt,itemindent=2em,itemsep=6pt]
\item \label{g100110a}
$B=
1\oplus d
$, \quad $\Ima (d)>0$

\begin{enumerate}[label=(\roman*),wide=0pt,itemindent=4em,itemsep=3pt]
\item $\widetilde{B}=0\oplus 1$\quad ($\widetilde{a}=0$, $\widetilde{d}=1$)\\ 
From the first estimate in (\ref{est144}) and the first equation of (\ref{eqBF10}) for $\widetilde{a}=0$ we deduce that $|du|^2,|x|^2\geq \frac{\min\{1,|d|\}}{4}-\epsilon$ (see \ref{p1-1v0110} \ref{p1-1v0110c} \ref{p1-1v0110ci} (\ref{ocenaux4}) for the same conclusion).
The last equation of (\ref{eqBF10}) for $\widetilde{d}=1$ further yields that either $|y|^2\geq \frac{1-\epsilon}{2}\geq \frac{1}{4}$ or $|v|^2\geq \frac{1-\epsilon}{2|d|}\geq \frac{1}{4|d|}$. By combining these facts we get that either $|xv|^2$ or $|uy|^2$ is at least $\frac{1}{4|d|}\big(\frac{\min\{1,|d|\}}{4}-\epsilon\big)$.
Similarly, either $|xy|^2$ or $|duv|^2$ is greater or equal to $\frac{1}{4}(\frac{\min\{1,|d|\}}{4}-\epsilon)$.
As in \ref{p1-1v0110} \ref{p1-1v0110c} \ref{p1-1v0110cii} we obtain precisely (\ref{ocenakot1}), (\ref{ocenakot2}), but now with estimates 
\[
|\sin \psi_1|\leq \tfrac{1}{4|d|}(\tfrac{\min\{1,|d|\}}{4}-\epsilon), \qquad |\sin \psi_1|\leq \tfrac{1}{4}(\tfrac{\min\{1,|d|\}}{4}-\epsilon),
\]
which give a contradiction for any suitably small $\epsilon,\delta$ as well.
   
%

\item \label{g100110ai}
$\widetilde{B}=\widetilde{a}\oplus 0$\\ 
To prove the existence of a path for any $\widetilde{a}\geq 0$, then given an arbitrary $s>0$ it is sufficient to solve the system of equations $ax^2+du^{2}=\widetilde{a}+\epsilon_1$ and $2\Rea (\overline{x}u)=-1+\delta_1$, where $|\epsilon_1|,|\delta_1|\leq s$. Indeed, provided that $y,v$ are chosen small enough, the last two equations in (\ref{eqBF10}) for $\widetilde{b}=0$ are satisfied for some $|\epsilon_2|,|\epsilon_4|\leq s$, and the absolute values of expressions in Lemma \ref{lemapsi1} (C\ref{r7}) for $k=0$ are bounded by $s$ (it implies $\|E\|\leq\rho \sqrt{s}$ for some constant $\rho>0$ independent of $s$; see also (\ref{eqABEF})). 

\quad
In view of the notation (\ref{polarL}) with $d=|d|e^{i\vartheta}$, $0<\vartheta<\pi$ we obtain  
\begin{equation}\label{izraz1}
e^{i(\eta+\phi)}\big(|x|^{2}e^{i(\phi-\eta)}+|d|\,|u|^{2}e^{-i(\phi-\eta)+i\vartheta}\big)=\widetilde{a}+\epsilon_1, \qquad 2|x|\,|u| \cos (\eta-\phi)=1+\delta_1,
\end{equation}
where $|\epsilon_1|,|\delta_1|\leq s$. We now observe the range of the following functions: 
\begin{align}\label{izrazf1}
& f(r,t,\beta)=\big|r^{2}e^{i\beta}+|d|t^{2}e^{-i\beta+i\vartheta}\big|, \qquad  2rt\cos \beta=-1,\quad r,t\geq 0,\,\, \beta\in \mathbb{R},\\
%
& g(r)=f\big(r,\tfrac{1}{2r\cos (\frac{\vartheta}{2}+\frac{\pi}{2})},\tfrac{\vartheta}{2}+\tfrac{\pi}{2}\big)=\big|ir^{2}e^{i\frac{\vartheta}{2}}-\tfrac{i|d|}{4r^{2}\sin^{2} (\frac{\vartheta}{2})}e^{i\frac{\vartheta}{2}}\big|.\nonumber
\end{align}
Since $g\to \infty$ as $r\to\infty$ and $g(r)\to 0$ as  $r\to\frac{4\sqrt{|d|}}{\sin \frac{\Delta}{2}}$, $d\ne 0$ or $r\to 0$, $d=0$, the range of $g$ (and hence $f$ with a constraint) is $(0,\infty)$. Thus the modulus of the left-hand side of the first equation in (\ref{izraz1}) 
can be any number from the interval $(0,\infty)$ and the second equation in (\ref{izraz1}) is valid at the same time, provided that $\phi-\eta$, $|u|$, $|x|$ (corresponding to $\beta,t,r$ in (\ref{izrazf1})) are chosen appropriately.
Finally, a suitable choice of $\phi+\eta$ guarantees that the left-hand side of (\ref{izraz1}) is real and positive. Thus (\ref{izraz1}) is solvable fo any $\widetilde{a}\geq 0$ (and any $d$, $\Ima d>0$).
\end{enumerate}

\item 
$B=\begin{bsmallmatrix}
0 & b\\
b & 1
\end{bsmallmatrix}$, $b>0$\\

\begin{enumerate}[label=(\roman*),wide=0pt,itemindent=4em,itemsep=3pt]

\item
$\widetilde{B}=0\oplus 1$

Let $x,u$ be as in (\ref{polarL}). Since $|u\overline{x}|\geq |\Rea (\overline{x}u)|\geq 1-\delta\geq \frac{1}{2}$ (see (\ref{est144})), then assuming $\epsilon \leq \frac{b}{2}$ and applying (\ref{ocenah}) to the first equation of (\ref{eqBF5}) for $\widetilde{a}=0$ yields
\begin{equation}\label{razlika1}
\tfrac{4\epsilon}{b} \geq \Big|\sin \big ((\phi+\eta)-(2\eta+\pi)+2l\pi\big)\Big| =\big|\sin (\phi-\eta)\big|, \quad l\in \mathbb{Z}.
\end{equation}
%
%
We multiply the first equation of (\ref{eqBF5}) for $\widetilde{a}=0$ with $\frac{v}{u}$ to get
$2bvx=(\frac{\epsilon_1}{u^{2}}-1)uv$. 
Together with the last estimate in (\ref{est144}) and providing $\epsilon\leq b$ we obtain:
\begin{align}\label{vxuv}
&|u|^{2}\geq 2b|ux|-\epsilon\geq b-\epsilon,\quad
|vx|\leq \tfrac{1}{2b}\big(1+\tfrac{\epsilon}{b-\epsilon}\big)|uv|,\\
&|\overline{u}y|\leq \delta+|v\overline{x}|\leq \delta+ \tfrac{1}{2b}(1+\tfrac{\epsilon}{b-\epsilon})|uv|.\nonumber
\end{align}
%
Further, we have
\begin{align*}
vx+uy &=e^{2i\eta}(v\overline{x}e^{2i(\phi-\eta)}+y\overline{u})=e^{2i\eta}\big(v\overline{x}(e^{2i(\phi-\eta)}-1)+(v\overline{x}+y\overline{u})\big)\\
      &=e^{2i\eta}\big(iv\overline{x}\sin (\phi-\eta)e^{i(\phi-\eta)}+(v\overline{x}+y\overline{u})\big).\nonumber
\end{align*}
%
Combining it with the second equation of (\ref{eqBF5}) for $\widetilde{b}=0$ (estimate of (\ref{vxuv})) gives
\[
|uv|=|\epsilon_2-b(vx+yu)|\leq \epsilon+b\delta+ b|vx||\sin (\phi-\eta)|\leq \tfrac{b}{2b-2\epsilon}|uv||\sin (\phi-\eta)|+ b\delta+\epsilon.
\]
After simplifying and using (\ref{razlika1}) we get 
%
\begin{equation*}
|uv|\leq (b\delta+\epsilon)\big( 1-\tfrac{4\epsilon}{b-\epsilon}\big)^{-1}.
\end{equation*}
Multiplying the third equation of (\ref{eqBF5}) for $\widetilde{d}=1$ by $u^{2}$, and rearranging the terms gives $2b(uv)(uy)+(uv)^{2}=(1+\epsilon_4)u^{2}$. Combining this with (\ref{vxuv}) and with the upper estimate on $|uv|$ finally yields an inequality that fails for any $\epsilon,\delta$ small enough.

\item $\widetilde{B}=\widetilde{a}\oplus 0$, $\widetilde{a}\geq 0$\\
To prove the existence of a path we use the same argument as in \ref{g100110} \ref{g100110a} \ref{g100110ai}. For a given $s>0$ it suffices to solve the equations $2bux+u^{2}=\widetilde{a}+\epsilon_1$, $|\epsilon_1|\leq s$ (see (\ref{eqBF5})) and $-\Rea (\overline{x}u)=1+\delta_1$, $|\delta_1|\leq s$ ($k=1$ in Lemma \ref{lemapsi1} (C\ref{r7})).
In view of (\ref{polarL}) they can be written in the form
\begin{equation}\label{izraz2}
e^{i(\eta+\phi)}(2b|ux|+|u|^{2}e^{i(\eta-\phi)})=\widetilde{a}+\epsilon_1, \quad - |ux| \cos (\eta-\phi)=1+\delta_1, \qquad |\epsilon_1|,|\delta_1|\leq s.
\end{equation}
As in the case mentioned before we define the function given with constraint:
\begin{equation*}
f(r,t,\beta)=|2brt+r^{2}e^{i\beta}|, \qquad - rt\cos \beta=1,\quad r,t\geq 0,\quad \beta\in \mathbb{R}.
\end{equation*}
Since $g(r)=f(r,\frac{1}{r},\pi)=|2b-r^{2}|$ with $g(\sqrt{2b})=0$ and $g(r)\to\infty$ as $r\to\infty$, the range of $g$ (and hence $f$) is $[0,\infty)$. Thus the left-hand side of the first equation in (\ref{izraz2}) 
can be any number from $[0,\infty)$ and the second equation in (\ref{izraz2}) stays valid for $\delta_1=0$, simultaneously, provided that $\phi-\eta$, $|u|$, $|x|$ (corresponding to $\beta,t,r$ above), $\phi+\eta$ (makes the left-hand side of (\ref{izraz2}) real and positive) are chosen appropriately. 
\end{enumerate}

\item $B=1\oplus 0$

It is equivalent to consider 
$\big(1\oplus 0,
\widetilde{B}\big)\dashrightarrow
\big(1\oplus -1,
\begin{bsmallmatrix}
1 & 1\\
1 & 1 
\end{bsmallmatrix}\big)\approx
\big( \begin{bsmallmatrix}
0 & 1\\
1 & 0 
\end{bsmallmatrix},
1\oplus 0 \big)
$.
In case $\widetilde{B}=\widetilde{a}\oplus 0$, $\widetilde{a}\geq 0$ we apply (\ref{cPepsi}) for $c(s)=1$ and 
$P(s)=\begin{bsmallmatrix}
\sqrt{p^{2}+1} & 0 \\
-p & s^{2}
\end{bsmallmatrix}$, 
$p=\left\{
\begin{array}{ll}
s^{-1}, & \widetilde{a}=0 \\
\frac{|1-\widetilde{a}|}{2\sqrt{\widetilde{a}}}, & \widetilde{a}>0 
\end{array}
\right.$.

\quad
When $\widetilde{B}=0\oplus 1$ ($\widetilde{d}=1$, $\widetilde{a}=0$) and $a=b=d=1$ we get from (\ref{eqBF1}):
\[
(u+x)^{2}=\epsilon_1, \qquad (u+x)(v+y)=\epsilon_2,\qquad (v+y)^{2}=1+\epsilon_4.
\]
Hence $u+x=(-1)^{l_1'}\sqrt{\epsilon_1}$ and $v+y=(-1)^{l_2'}(1+\epsilon_4')$ with $|\epsilon_4'|\leq |\epsilon_4|\leq \epsilon$, $l_1',l_2'\in \mathbb{Z}$ (see (\ref{ocenakoren})).
From the first estimate of (\ref{est17}) 
it follows that either $|x|^{2}\geq \frac{1}{4}$ or $|u|^{2}\geq \frac{1}{4}$ (note $\delta\leq \frac{1}{2}$), and furthermore $u+x=(-1)^{l_1'}\sqrt{\epsilon_1}$ yields $|x|,|u|\geq \frac{1}{2}-\epsilon$.
Next, on one hand we have $|y|+|v|\geq |y+v|\geq 1-\epsilon\geq \frac{1}{2}$ and on the other hand (see (\ref{est17})):
\begin{equation}\label{est1177}
\delta\geq \big||y|^{2}-|v|^{2}\big|= \big||y|-|v|\big| \, \big||y|+|v|\big|\geq  \big||y|-|v|\big|^{2},
\end{equation}
hence $|y|,|v|\geq \frac{1}{4}-\sqrt{\delta}$. Using the lower estimates on $x$, $y$, $u$, $v$ and
applying (\ref{ocenah}) to $u+x=(-1)^{l_1'}\sqrt{\epsilon_1}$ and to $|\overline{x}y-\overline{u} v|\leq \delta$ (see (\ref{est17})) and symplifying, gives 
\begin{align}\label{ocenapsi2yv} 
&\psi_1=\phi-\eta-\pi+2l_1\pi,\qquad |\sin \psi_1|\leq \tfrac{2\sqrt{\epsilon}}{\frac{1}{2}-\epsilon},\quad l_1\in \mathbb{Z},\nonumber\\
%
&\psi_2=(\varphi-\phi)-(\kappa-\eta)+2l_2\pi,\qquad |\sin \psi_2|\leq \tfrac{2\delta}{(\frac{1}{4}-\sqrt{\delta})(\frac{1}{2}-\epsilon)},\quad l_2\in \mathbb{Z},\\
&\varphi-\kappa=\psi_1+\psi_2-\pi+2(l_2+l_2)\pi, \quad |2\cos (\tfrac{\varphi-\kappa}{2})| \nonumber
\leq 2|\sin \psi_1|+2|\sin \psi_2|.
\end{align}
Further we denote $y,v$ as in (\ref{polarL}) and manipulate the following expression:
\[
y+v=
(|y|-|v|)e^{i\varphi}+|v|(e^{i\varphi}+e^{i\kappa})=(|y|-|v|)e^{i\varphi}+2|v|\cos (\tfrac{\varphi-\kappa}{2})e^{i\frac{\varphi+\kappa}{2}}.
\]
Using this and (\ref{est1177}), (\ref{ocenapsi2yv}) we deduce that
\begin{equation*} 
1-\epsilon\leq |1+\epsilon_4'|=|y+v|\leq \sqrt{\delta}+\tfrac{8\sqrt{\epsilon}}{1-2\epsilon}+\tfrac{32\delta}{(1-4\sqrt{\delta})(1-2\epsilon)}. 
\end{equation*}
It is not too dificult to find arbitrarily small $\epsilon,\delta$ to contradict the last inequality.
\end{enumerate}


\item \label{g100110tau}
$ (1\oplus 0,
\widetilde{B})\dashrightarrow 
\big(\begin{bsmallmatrix}
0 & 1\\
\tau & 0 
\end{bsmallmatrix},
\begin{bsmallmatrix}
a & b\\
b & d 
\end{bsmallmatrix}\big)
$, \quad $0\leq \tau <1$, $b\geq 0$


From Lemma \ref{lemapsi1} (\ref{lemapsi11}) for (C\ref{r4}) with $\alpha=0$ we get
\begin{equation}\label{ocena5psi1}
|\overline{y}v|, |\overline{x}v|, |\overline{u}y|\leq \delta, \quad
\big|(1+\tau)\Rea(\overline{x}u)+i(1-\tau)\Ima(\overline{x}u)-\tfrac{1}{c}\big|\leq \delta.
\end{equation}
%
%
Denoting $\delta_2=xv$, $\delta_3=yu$, $\delta_4=yv$ and rearranging the terms in (\ref{eqBF1}) yields:
\begin{align}\label{eqBF10t}
&ax^2+2bux+du^{2}=\widetilde{a}+\epsilon_1,\nonumber \\
&duv+axy=\widetilde{b}+\epsilon_2-b\delta_2-b\delta_3,\\ 
&dv^2+ay^2=\widetilde{d}+\epsilon_4-2b\delta_4. \nonumber
\end{align}

\begin{enumerate}[label=(\alph*),wide=0pt,itemindent=2em,itemsep=6pt]

\item \label{g100110taua}
$
\big(1\oplus 0,
\begin{bsmallmatrix}
0 & 0\\
0 & 1
\end{bsmallmatrix}\big)\dashrightarrow
\big(\begin{bsmallmatrix}
0 & 1\\
\tau & 0 
\end{bsmallmatrix},
B\big)
$, \quad $0\leq \tau <1$

From (\ref{ocena5psi1}) it follows that either $|x|^{2},|y|^{2}\leq \delta$ or $|v|^{2},|y|^{2}\leq \delta$ or $|u|^{2},|v|^{2}\leq \delta$. 

\quad
If $|v|^{2},|y|^{2}\leq \delta$ the last equation of (\ref{eqBF10t}) with $\widetilde{d}=1$ fails for $\epsilon=\delta<\frac{1}{1+|a|+|d|+2b}$. 

\quad
Next, if $|x|^{2},|y|^{2}\leq \delta$ the first equation of (\ref{eqBF10t}) for $\widetilde{a}=0$ and the last estimate of  (\ref{ocena5psi1}) give
\begin{equation}\label{estkonec1}
\tfrac{2b(1-\delta)}{1+\tau}\leq \tfrac{2b}{1+\tau}\big|2\tau\Rea(\overline{x}u)+(1-\tau)\overline{x}u\big|
\leq |2bux|\leq \epsilon+|a|\delta+|du^{2}|.
\end{equation}
For $d=0$, $\epsilon=\delta<\frac{2b(1+\tau)^{-1}}{1+|a|+2b(1+\tau)^{-1}}$ we immediately get a contradiction. The last equation of (\ref{eqBF10t}) for $\widetilde{d}=1$ and the second equation of (\ref{eqBF10t}) for $\widetilde{b}=0$ yield 
\[
|du^2|\leq  \tfrac{|duv|^2}{|dv^{2}|}=\tfrac{(-axy+\epsilon_2-b\delta_2-b\delta_3)^{2}}{\widetilde{d}+\epsilon_4-2b\delta_4}\leq \tfrac{(\epsilon+2b\delta+|a|\delta)^2}{(1-a\delta-\epsilon-2b\delta)|d|}.
\]
(Note that for $v=0$ the last equation of (\ref{eqBF10t}) with $\widetilde{d}=1$ fails for $\epsilon=\delta<\frac{1}{1+|a|+2b}$.)
Using this and (\ref{estkonec1}) we conclude that
\[
2b \tfrac{1-\delta}{1+\tau}\leq 
|a|\delta+\epsilon+\tfrac{(\epsilon+2b\delta+|a|\delta)^2}{1-|a|\delta-\epsilon-2b\delta}.
\]
It is straightforward to see that this fails for any sufficiently small $\epsilon,\delta$.
The case $|u|^{2},|v|^{2}\leq \delta$ is done in a similar manner, with $a,x,u$ replaced by $d,y,v$, respectively.

\item 
$
(1\oplus 0,
\widetilde{a}\oplus 0)\to \big(
\begin{bsmallmatrix}
0 & 1\\
\tau & 0 
\end{bsmallmatrix},
B\big)
$,\quad  $0\leq \tau <0$, $\widetilde{a}\geq 0$


As in \ref{case10to1t} \ref{case10to1ta} we argue that (\ref{eqABEF}) has solutions for arbitrarily small $E$, $F$ precisely when given $s\in (0,1]$ we can solve the system of equations 
\begin{equation}\label{pogojtau}
ax^2+2bux+du^2=\widetilde{a}+\epsilon_1, \qquad (1+\tau)\Rea(\overline{x}u)+i(1-\tau)\Ima(\overline{x}u)=c^{-1}(1+\delta_1),
\end{equation}
where $ |\epsilon_1|,|\delta_1|\leq s$. 
By choosing $y,v$ small enough the remaining two equations of (\ref{eqBF10t}) are satisfied for some $|\delta_2|,|\delta_4|\leq s$ and the moduli of the first three expressions in Lemma \ref{lemapsi1} (C\ref{r4}) for $\alpha=1$ are bounded by $s$.

\begin{enumerate}[label=(\roman*),wide=0pt,itemindent=4em,itemsep=3pt]
\item $\begin{bsmallmatrix}
0 & b\\
b & 0
\end{bsmallmatrix}$, $b>0$\\
Observe that since $|c|=1$ the second equation of (\ref{pogojtau}) is equivalent to the equation which compares the moduli (squared) of both sides of the equation: 
\begin{equation}\label{elipsa}
(1+\tau)^2\big(\Rea(\overline{x}u)\big)^2+(1-\tau)^2\big(\Ima(\overline{x}u)\big)^2=|1+\delta_1|^2, \qquad |\delta_1|\leq s.
\end{equation}
In particular it follows that $\frac{1+s}{1-\tau}\geq |\overline{x}u|\geq \tfrac{1-s}{1+\tau}$. Using the first equation of (\ref{pogojtau}) for $a=d=0$ we further get $\big|\widetilde{a}-2b|ux|\big|\leq \epsilon $, and thus
\[
\tfrac{1+s}{1-\tau}+\tfrac{\epsilon}{2b}\geq \tfrac{\widetilde{a}}{2b}\geq \tfrac{1-s}{1+\tau}-\tfrac{\epsilon}{2b}.
\]
Clearly, when $\widetilde{a}\not \in [\frac{2b}{1+\tau},\frac{2b}{1-\tau}]$ we see that the above inequality fails for every sufficiently small $s>0$. For $\widetilde{a}\in [\frac{2b}{1+\tau},\frac{2b}{1-\tau}]$ we choose $u,x$ such that $2bxu=\widetilde{a}$ and so that $R+iT=\overline{x}u=\frac{\widetilde{a}}{2b}\frac{\overline{x}}{x}$ lies on an ellipse $(1+\tau)^2 R^2+(1-\tau)^2 T^2=1$ (see (\ref{elipsa})). Therefore (\ref{pogojtau}) for $a=d=\epsilon_1=\delta_1=0$ is satisfied and it yields the existence of the path.

\item $B=\begin{bsmallmatrix}
a & b\\
b & d
\end{bsmallmatrix}$, \quad $|a|+|d|\neq 0$, $b>0$\\
Set $x_0=|x_0|e^{\phi_0}$, $u_0=|u_0|e^{i\eta_0}$ and let $\frac{x}{u}=\frac{x_0}{u_0}=R_0e^{i\beta_0}$ (with $\beta_0=\phi_0-\varphi_0$) solve the equation $0=ax^2+2bux+du^{2}=u^2\big(a(\frac{x}{u})^2+2b\frac{x}{u}+d\big)
$ (see (\ref{pogojtau}) for $\widetilde{a}=0$).
A direct computation shows that by choosing $|x_0|^{2}=R_0\big|(1+\tau)\cos (-\beta_0) +(1-\tau)\sin (-\beta_0)\big|^{-1}$ we assure that $\overline{x}_0u_0$ fullfils the second equation of (\ref{pogojtau}) for $\delta_1=0$ and some $|c|=1$. 

\quad
Denoting further $x,y,u,v$ as in (\ref{polarL}) with $|x|=r$, $|u|=t$, $a=|a|e^{i\iota}$, $d=|d|e^{i\vartheta}$ the equations (\ref{pogojtau}) for $\epsilon_1=\delta_1=0$ 
are seen as 
\begin{align}\label{izraz4}
&e^{i(\phi+\eta)}\big(|a|r^{2}e^{i(\phi-\eta+\iota)}+2brt+dt^{2}e^{-i(\phi-\eta-\vartheta)}\big)=\widetilde{a}, \\ 
&rt\big|(1+\tau)\cos(\eta-\phi)+i(1-\tau)\sin (\eta-\phi)\big|=1.\nonumber
\end{align}
Define a function with constraint
\[
f(r,t,\beta)=\big||a|r^{2}e^{i(\beta+\iota)}+2brt+dt^{2}e^{-i(\beta+\vartheta)}\big|, \quad rt\big|(1+\tau)\cos (\beta)+i(1-\tau)\sin(\beta)\big |=1,
\]
where $r,t\geq 0, \beta\in \mathbb{R}$.
Fot $t=\frac{\Gamma(\beta)}{r}$, $\Gamma(\beta)=\big|(1+\tau)\cos (\beta)+i(1-\tau)\sin (\beta)\big|^{-1}$ we set
\[
g(r)=f\big(r,\tfrac{1}{r}\Gamma(\beta_0),\beta_0\big)=\big||a|r^{2}e^{i(\beta_0+\iota)}+2|b|\Gamma(\beta_0)+|d|\tfrac{(\Gamma(\beta_0))^{2}}{r^{2}}e^{-i(\beta_0+\vartheta)}\big|.
\]
Since $\Gamma$ is bounded we get $g(r)\to \infty$ as $r\to \infty$. As also $g(|x_0|)=0$ (see the above observation) and $g$ is continuous for $r\in [|x_0|,\infty)$, the range of $g$ (and hence $f$ with constraint) is $[0,\infty)$. 
Thus the left-hand side of the first equation in (\ref{izraz4}) 
can be any number from the interval $[0,\infty)$ and the second equation in (\ref{izraz4}) stays valid, provided that $r,t$, $\phi+\eta$ ($\phi-\eta=\beta_0$) are chosen appropriately.
Thus (\ref{izraz4}) and hence (\ref{pogojtau}) are solvable for any $\widetilde{a}\geq 0$, which proves the existence of a path.

\item $B=a \oplus d$, \quad $a,d\in \{0,1\}$, $ad=0$\\
If $a\neq 0$, $d=0$ (or $d\neq 0$, $a=0$) we take $y=v=s$, $x=\sqrt{\frac{\widetilde{a}+s}{a}}$ (or $u=\sqrt{\frac{\widetilde{a}+s}{a}}$) and $u$ (or $x$) such that $\overline{x}u=\frac{1}{1+\tau}$ in (\ref{cPepsi}) to prove $
(1\oplus 0,
\widetilde{a}\oplus 0)\to \big(
\begin{bsmallmatrix}
0 & 1\\
\tau & 0 
\end{bsmallmatrix},
a\oplus d\big)
$.

\end{enumerate}

\item 
$
(1\oplus 0,
\widetilde{a}\oplus 1)\dashrightarrow
\big(\begin{bsmallmatrix}
0 & 1\\
0 & 0 
\end{bsmallmatrix},B
\big)
$ 


Since $|vy|\leq \delta$ by (\ref{ocena5psi1}), then either $|y|^2\leq \delta$ or $|v|^{2}\leq \delta$ (or both). Let us consider $|y|^2\leq \delta$. Then the last equation of (\ref{eqBF10t}) for $\widetilde{d}=1$ yields that $|d||v|^{2}\geq 1-2b\delta-|a|\delta$. If $d=0$, we get a contradiction for $\delta<\frac{1}{|a|+2b}$, while for $d\neq 0$ we get  
$|u|\geq  \frac{1-\delta}{|x|}\geq \frac{1-\delta}{\delta} \sqrt{
\frac{1-2b\delta-|a|\delta}{|d|}
}$ (by (\ref{ocena5psi1}) for $\tau=0$ we have $|vx|\leq \delta$,  $\big||\overline{x}u|-1\big|\leq \delta$). Note that $x=0$ would yield $1\leq \delta$, and $\delta=0$ would imply $x=y=0$, both is not possible. Applying the triangle inequality to the right-hand side of the second equation of (\ref{eqBF10t}) for $\widetilde{b}=0$, and using the lower estimates for $|u|,|v|$ give:
\[
2b\delta+\epsilon+|a|\delta\sqrt{\tfrac{|d|}{1-2b\delta-|a|\delta}}\geq |\epsilon_2-axy-b\delta_2-b\delta_3|=|duv|\geq 
\tfrac{1-\delta}{|d|\delta} \big( 1-2b\delta-|a|\delta  \big).
\]
By taking $\epsilon,\delta$ sufficiently small this inequality fails.
The case $|v|^{2}\leq \delta$ is for the sake of symmetry done in a similar manner, but with $a,x,u$ replaced by $d,y,v$.

\item 
$
\big(1\oplus 0,
\begin{bsmallmatrix}
0 & 1\\
1 & 0 
\end{bsmallmatrix}\big)\dashrightarrow
\big(
\begin{bsmallmatrix}
0 & 1\\
0 & 0 
\end{bsmallmatrix},
B\big)
$\\ 
If $a=d=0$ the second equation of (\ref{eqBF10t}) for $\widetilde{a}=\widetilde{d}=0$, $\widetilde{b}=1$ fails for $\epsilon=\delta<\frac{1}{1+2b}$. 

\quad
Next, suppose $|a|+|d|\neq 0$.
As in \ref{g100110tau} \ref{g100110taua} we get that either $|x|^{2},|y|^{2}\leq \delta$ or $|v|^{2},|y|^{2}\leq \delta$ or $|u|^{2},|v|^{2}\leq \delta$. 

\quad
If $|x|^{2},|y|^{2}\leq \delta$ the first (the last) equation of (\ref{eqBF10t}) for $\widetilde{a}=0$ (or $\widetilde{d}=0$) and $|\overline{x}u|\leq 1+\delta$ (see (\ref{ocena5psi1}) for $\tau=0$) yield $|du^{2}|\leq |a|\delta+2b(1+\delta)+\epsilon$ (or $|dv^{2}|\leq \epsilon+2b\delta+|a|\delta$). Combining this with the second equation of (\ref{eqBF10t}) for $\widetilde{b}=1$ gives  
\[
\big(1-\epsilon-|a|\delta-2b\delta\big)^{2}\leq |duv|^{2}\leq \big(\epsilon+2b\delta+|a|\delta\big)\big(|a|\delta+2b(1+\delta)+\epsilon\big),
\]
which failes for any sufficiently small $\epsilon,\delta$.
In case $|u|^{2},|v|^{2}\leq \delta$ we get allmost the same estimates, but with $a,x,y$ replaced by $d,u,v$.

\quad
Finally, let $|v|^{2},|y|^{2}\leq \delta$.
Since $|ux|\leq 1+\delta$, then either $|x|^{2}\leq 1+\delta$ or $|u|^{2}\leq 1+\delta$ (or both). If $|ax^{2}|\leq |a|(1+\delta)$ (or $|du^{2}|\leq |d|(1+\delta)$) the first equation of (\ref{eqBF10t}) for $\widetilde{a}=0$ yields $|du|^{2}\leq \epsilon+2b\delta+|a|(1+\delta)$ (or $|ax|^{2}\leq \epsilon+2b\delta+|d|(1+\delta)$). Thus we deduce that
\[ 
|d|\,|uv|^{2},|a|\,|xy|^{2}\leq \delta \big(\epsilon+2b\delta+(1+\delta)\max\{|d|,|a|\}\big).
\]
From the second equations of (\ref{eqBF10t}) for $\widetilde{b}=1$ (using the triangle inequality) we get
\[
\delta\big(|a|+|d|\big)  \big(\epsilon+2b\delta+(1+\delta)\max\{|d|,|a|\}\big)
\geq 1-\epsilon-2b\delta,
\]
which fails for any sufficiently small $\epsilon,\delta$.
\end{enumerate}


\item \label{b1010}
$
(1\oplus 0,
\widetilde{a}\oplus \widetilde{d})\dashrightarrow
\big(\begin{bsmallmatrix}
0 & 1\\
1 & i 
\end{bsmallmatrix},B\big)
$, \quad $\widetilde{d}\in \{0,1\}$, $\widetilde{a}\geq 0$, $\widetilde{a}\widetilde{d}=0$ (see Lemma \ref{lemalist}, (\ref{pogoj}))

\begin{enumerate}[label=(\alph*),wide=0pt,itemindent=2em,itemsep=6pt]
\item 
$
B=
\begin{bsmallmatrix}
0 & b\\
b & 0 
\end{bsmallmatrix}
$, $b>0$

If $\widetilde{d}=0$ then $P(s)=
\frac{1}{\sqrt{2}}\begin{bsmallmatrix}
\frac{\widetilde{a}}{2b}(1-i) & s \\
1+i & s
\end{bsmallmatrix}$, $c(s)=-i$ in (\ref{cPepsi}) yields
$(1\oplus 0,
\widetilde{a}\oplus 0)\to
\big(\begin{bsmallmatrix}
0 & 1\\
1 & i 
\end{bsmallmatrix},\begin{bsmallmatrix}
0 & b\\
b & 0 
\end{bsmallmatrix}\big)$. 

\quad
Next, let $\widetilde{a}=0$, $\widetilde{d}=1$. 
From Lemma \ref{lemapsi1} (\ref{lemapsi11}) for (C\ref{r10}) with $\alpha=1$, $c^{-1}=e^{i\Gamma}$ we get
\begin{equation}\label{est16}
|\overline{x}v+\overline{u}y|\leq \delta, \quad  |v|^{2},|\overline{u}v|,\big|2\Rea(\overline{y}u)\big|\leq \delta , \quad 
\big|2\Rea(\overline{x}u)+i|u|^{2}-e^{i\Gamma}\big|\leq \delta. 
\end{equation}
We have equations (\ref{eqadb}); the first one for $\widetilde{a}=0$ (the third one for $\widetilde{d}=1$) gives $\frac{\epsilon}{2b}\geq |ux|$ (and $\frac{1-\epsilon}{2b}\leq |vy|$). The last estimate in (\ref{est16}) further implies 
$\frac{\epsilon}{2b} \geq |ux|\geq \big|\Rea (\overline{x}u)\big|\geq \frac{|\cos \Gamma|-\delta}{2}$ and  
\[
|u|^{2}\geq |\sin \Gamma|-\delta \geq \sqrt{1-(\tfrac{\epsilon}{b}+\delta)^{2}}-\delta, \qquad 
|x|^{2}\leq \tfrac{\epsilon^{2}}{(2b)^{2}}\big(\sqrt{1-(\tfrac{\epsilon}{b}+\delta)^{2}}-\delta\big)^{-1}.
\]
Multiplying the second (rearranged) equation of (\ref{eqadb}) with $\frac{v}{b}$, and using the estimates on $|x|$, $|u|$, $|v|$, $|vy|$ we get an inequality that fails to hold for $\epsilon,\delta$ small enough:
\[
\big(\sqrt{1-(\tfrac{\epsilon}{4b}+\delta)^{2}}-\delta\big)\tfrac{1-\epsilon}{2b}\leq \big|u(vy)\big|=|\tfrac{\epsilon_2v}{b}-v^{2}x|\leq \tfrac{\epsilon \sqrt{\delta}}{b}+\tfrac{\delta\epsilon}{2b}\big(\sqrt{1-(\tfrac{\epsilon}{4b}+\delta)^{2}}-\delta\big)^{-1}.
\]


\item 
$
B=
a\oplus d
$, \quad $a\geq 0$

%

\begin{enumerate}[label=(\roman*),wide=0pt,itemindent=4em,itemsep=3pt]

\item $\widetilde{B}=0\oplus 1$ \quad($\widetilde{d}=1$, $\widetilde{a}=0$)

We have equations (\ref{eqBFadad}) for $a=0$, $\widetilde{d}=1$ and it is apparent that the last of these equations fails to hold for  $\epsilon=\delta<\frac{1}{\epsilon+|d|\delta}$. Next, we set set $uv=\delta_1$, $v^{2}=\delta_2$, $|\delta_1|,|\delta_2|\leq \delta$. The last equation of (\ref{eqBFadad}) for $\widetilde{d}=1$ gives $ay^{2}=1+\epsilon_4-d\delta_2$. Combining it with the second and the first equation of (\ref{eqBFadad}) for $\widetilde{a}=0$ then yields $ax^{2}=\frac{(axy)^{2}}{ay^{2}}=\frac{(\epsilon_2-d\delta_1)^{2}}{1+\epsilon_4-d\delta_2}$ and $du^{2}=\epsilon_1-ax^{2}=\epsilon_1-\frac{(\epsilon_2-d\delta_1)^{2}}{1+\epsilon_4-d\delta_2}$, respectively. From the last estimate in (\ref{est16}) we finally get an inequality that fails for $a,d\neq 0$ and $\epsilon,\delta$ small enough:
\begin{align*}
(1-\delta)^{2} &\leq \big|2\Rea (\overline{x}u)+i|u|^{2}\big|^{2}\leq 4|xu|^{2}+|u|^{4} \\
               &\leq \tfrac{4}{a|d|}\tfrac{(\epsilon-|d|\delta)^{2}}{1-\epsilon-|d|\delta}\big(\epsilon+\tfrac{(\epsilon+|d|\delta)^{2}}{1-\epsilon-|d|\delta}\big)+\tfrac{1}{|d|^{2}}\big(a\epsilon+\tfrac{(\epsilon+|d|\delta)^{2}}{1-\epsilon-|d|\delta}\big)^{2}.
\end{align*}
When $d=\widetilde{a}=0$, $\widetilde{d}=1$, $a\neq 0$ the first and the last equation of (\ref{eqBFadad}) for $\widetilde{a}=0$ give $x^{2}=\frac{\epsilon_1}{a}$, $|y|^{2}=|\frac{1+\epsilon_4}{a}|\leq \frac{3}{2a}$. Further, the last estimate in (\ref{est16}) first leads to the upper estimate $|u|^{2}\leq 1+\delta\leq \frac{3}{2}$ (hence $\frac{3\epsilon}{2a}\geq |xu|^{2}\geq |\Rea (\overline{x}u)|$) and then the lower estimate $|u|^{2}\geq \frac{1}{2}-\frac{3\epsilon}{a}\geq \frac{1}{2}-\frac{3\epsilon}{a}$. Combining these facts with the first estimate in (\ref{est16}) implies the inequality that fails for $\epsilon=\delta<\frac{3}{4}\big((1+\sqrt{a})^{2}+\frac{9}{2a}  \big)^{-1}$:
\[
(\tfrac{1}{2}-\tfrac{3\epsilon}{a})\tfrac{3}{2a}\leq |\overline{u}y|^{2}\leq \big(|\overline{x}v|+\delta\big)^{2}\leq \big(\sqrt{\tfrac{\epsilon\delta}{a}}+\delta\big)^{2}
\]

\item
$\widetilde{B}=\widetilde{a}\oplus 0$, \quad $\widetilde{a}\geq 0$\\
In case $d=0$, $a\neq 0$ the path is guaranteed by 
$P(s)=\begin{bsmallmatrix}
\sqrt{\frac{\widetilde{a}}{a}} & s \\
i & 0
\end{bsmallmatrix}$, $c(s)=-i$ in (\ref{cPepsi}).

\quad
Next, let $a=0$ (hence $d>0$). The first equation of (\ref{eqBFadad}) for $a=0$ and the last estimate in (\ref{est16}) imply $1+\delta\geq |u|^2\geq \frac{\widetilde{a}-\epsilon}{d}$, a contradiction for $\widetilde{a}>d$, $\epsilon=\delta<\frac{\widetilde{a}-d}{d+1}$. When $\widetilde{a}\leq d$ the existence of a path for $\widetilde{a}=0$ and $\widetilde{a}> 0$ is proved with  (\ref{cPepsi}) for $c(s)=1$, 
$P(s)=\frac{1}{\sqrt{2}}\begin{bsmallmatrix}
s^{-1} & s \\
s & 0
\end{bsmallmatrix}$ and $c(s)=\frac{1}{d}(\sqrt{d^{2}-\widetilde{a}^{2}}-\widetilde{a})$,  
$P(s)=\frac{1}{2\sqrt{\widetilde{a}d}}\begin{bsmallmatrix}
\sqrt{d^{2}-\widetilde{a}^{2}} & s \\
2\widetilde{a} & 0
\end{bsmallmatrix}$, respectively.

\quad
It is left to consider the case $a>0$, $d\neq 0$.
Using a similar argument as in \ref{g100110} \ref{g100110a} \ref{g100110ai} we prove the existence of a path by solving the system of equations
$ax^{2}+du^{2}=\widetilde{a}$ and $\big|2\Rea (\overline{x}u)+i|u|^{2}\big|=1$; by choosing $v,y$ sufficiently small we assure that the remaining expressions in Lemma \ref{lemapsi1} for (C\ref{r10}) are arbitrarily small (then $\|E\|$ is arbitrarily small) and the last two equations in (\ref{eqBFadad}) are satisfied.
By setting $x=re^{i\phi}$, $u=te^{i\eta}$ with $d=|d|e^{i\vartheta}$ we can write the above equations as:
\begin{equation}\label{izrazeq6}
e^{i(\phi+\eta)}\big(ar^{2}e^{i(\phi-\eta)}+|d|t^{2}e^{-i(\phi-\eta)+i\vartheta}\big)=\widetilde{a},\qquad  
|2rt\cos (\psi-\eta)+it^{2}|=1.
\end{equation}
We show that the range of the following function given with a constraint is $[0,\infty)$:
\begin{equation*}
f(r,t,\beta)=\big|ar^{2}e^{i\beta}+|d|t^{2}e^{-i\beta+i\vartheta}\big|, \qquad \big(2rt\cos (\beta)\big)^{2}+t^{4}=1, \quad r,t\geq 0,\, \beta \in \mathbb{R}.
\end{equation*}
If $\vartheta=2l\pi$, $l\in \mathbb{Z}$ then $\beta=\frac{\pi}{2}$, $t=1$ satisfy the constraint and
$f(r,1,\frac{\pi}{2})=\big|ar^{2}-|d|\big|$, thus the range of $f$ is $[0,\infty)$.
When $\vartheta\neq 2l\pi$, $l\in \mathbb{Z}$ then by taking $\beta=\frac{\vartheta+\pi}{2}$ the constraint can be rewritten as $r^{2}\big( 4t^{2}\sin ^{2}(\frac{\vartheta}{2}) \big)=1-t^{4}$. Letting $t\to 0$ implies $r\to \infty$ and hence  
$f(r,t,\frac{\vartheta+\pi}{2})=\big|ar^{2}-|d|\big|\to \infty$. For $r_0=\sqrt{\frac{|d|}{a}}$ and by setting $t_0$ to be the positive root of the equation $t^{4}+4\frac{|d|}{a}\sin^{2}(\frac{\vartheta}{2})t^{2}-1=0$ 
we have $f(r_0,t_0,\frac{\vartheta+\pi}{2})=0$. The range of $f$ is again $[0,\infty)$. Therefore (\ref{izrazeq6}) has a solution for any $\widetilde{a}\geq 0$, provided that $\phi-\eta$, $|u|$, $|x|$ (corresponding to $\beta$, $t$, $r$), $\phi+\eta$ are chosen appropriately.

\end{enumerate}

\end{enumerate}

\end{enumerate}

Thic completes the proof of the theorem.
\end{proof}

\smallskip
\textit{Acknowledgement.}\\
The research was supported by Slovenian Research Agency (grant no. P1-0291).

The author wishes to thank M. Slapar for helpful discussions considering the topic of the paper.

\end{document}